\numberwithin{equation}{section}
\newtheorem{theorem}{Theorem}
\newtheorem{proposition}[theorem]{Proposition}
\newtheorem{lemma}[theorem]{Lemma}
\newtheorem{corollary}[theorem]{Corollary}
\newtheorem{conjecture}[theorem]{Conjecture}
\theoremstyle{remark}
\newtheorem*{remark}{Remark}
\newtheorem{remarknu}[theorem]{Remark}
\def\leaderfill{\leaders\hbox to 3mm{\hss.\hss}\hfill}
\def\po#1#2{(#1)_#2}
\def\fl#1{\left\lfloor#1\right\rfloor}
\def\coef#1{\left\langle#1\right\rangle}
\def\al{\alpha}
\def\be{\beta}
\def\de{\delta}
\def\ga{\gamma}
\newcommand{\Z}{\mathbb{Z}}
\begin{document}
\title[Recursive sequences modulo prime powers]{A 
method for determining the mod-$p^k$ behaviour of
recursive sequences}
\author[C. Krattenthaler and 
T.\,W. M\"uller]{C. Krattenthaler$^{\dagger}$ and
T. W. M\"uller$^*$} 

\address{$^{\dagger*}$Fakult\"at f\"ur Mathematik, Universit\"at Wien,
Oskar-Morgenstern-Platz~1, A-1090 Vienna, Austria.
WWW: {\tt http://www.mat.univie.ac.at/\lower0.5ex\hbox{\~{}}kratt}.}

\thanks{$^\dagger$Research partially supported by the Austrian
Science Foundation FWF, grants Z130-N13 and F50-N15,
the latter in the framework of the Special Research Program
``Algorithmic and Enumerative Combinatorics"}
\thanks{$^\ast$Research supported by Lise Meitner Grant M1661-N25 of
  the Austrian Science Foundation FWF} 

\subjclass[2010]{Primary 05A15;
Secondary 05E99 11A07 68W30}

\keywords{Polynomial recurrences, Fu\ss--Catalan numbers, 
non-crossing graphs, Kreweras walks, blossom trees}

\dedicatory{Dedicated to the memory of J\'anos Acz\'el at
the occasion of the 100th anniversary of his birth}

\begin{abstract}
We present a method for obtaining congruences modulo powers of a
prime number~$p$ for combinatorial sequences whose generating
function satisfies an algebraic differential equation.
This method generalises the one by Kauers and the authors 
[{\it Electron.\ J. Combin.}\ {\bf18}(2) (2012), Art.~P37]
from $p=2$ to arbitrary primes. Our applications include
congruences for numbers of non-crossing graphs and numbers
of Kreweras walks modulo powers of~$3$, as well as congruences
for Fu\ss--Catalan numbers and blossom tree numbers modulo
powers of arbitrary primes.
\end{abstract}
\maketitle

\section{Introduction}
The purpose of the present article is to generalise the basic theory
concerning congruences for combinatorial sequences developed in
\cite{KaKMAA} for $2$-power moduli to arbitrary prime power moduli,
and to discuss a number of further applications of our method.  

Given a prime number $p$, consider the (formal) power series
\begin{equation} \label{eq:Phidef} 
\Phi(z) = \Phi_p(z) := \sum_{n\geq0} z^{p^n}.
\end{equation}
The series $\Phi_p(z)$ is easily seen to be transcendental over
$\mathbb{Z}$; this can be shown, for instance, by generalising (in a
straightforward manner) the density argument given in
\cite[Lemma~1]{KaKMAA}. In Lemma~\ref{lem:minpol} below, we provide a
different, and more elegant, argument, which however presupposes a
number of non-trivial facts concerning certain auxiliary series
$H_{b_1, b_2,\ldots, b_r}(z)$ established in Section~\ref{sec:extr}.  

While $\Phi_p(z)$ is transcendental over $\mathbb{Z}$, it is easily
seen to be algebraic when considered over finite rings
$\mathbb{Z}/p^\gamma\mathbb{Z}$, and the first focus of our paper is
on polynomial identities for $\Phi_p(z)$ modulo a given $p$-power, which
are of \textit{minimal degree}; cf.\ the next section. A (still
stubbornly unproven) conjecture concerns this minimal degree of
$\Phi_p(z)$ modulo~$p^\gamma$, see Conjecture~\ref{conj:1} in the
next section. As in \cite[Sec.~2]{KaKMAA} for the case of the
prime~$p=2$, we are only able to establish that the conjectured
minimal degree constitutes a lower bound, see Lemma~\ref{lem:minpol}.

The basic idea of our method for producing and establishing
congruences modulo $p$-powers for combinatorial sequences is to
express solutions of appropriate (formal) differential equations
(in technical terms: differentially algebraic power series over
the integers satisfying a uniqueness condition) 
as \textit{polynomials in $\Phi_p(z)$},
\textit{with coefficients that are Laurent polynomials in $z$}. For
each such differential equation, our method may be implemented to
(semi-)automatically yield such polynomial expressions in $\Phi_p(z)$
for arbitrary $p$-power moduli, provided such expressions do in fact
exist. 

However, such a procedure as described in the last paragraph would be
without any practical value, if we could not, at the same time,
provide an efficient algorithm for \textit{extracting coefficients
  from powers of\/ $\Phi_p(z)$}. This is the topic of
Section~\ref{sec:extr}. Briefly, the solution to this problem we
propose is to (i) \textit{expand powers of $\Phi_p(z)$} into
$\mathbb{Z}$-linear sums of certain auxiliary series  
\begin{equation}
\label{Eq:HDef}
H_{b_1, b_2,\ldots, b_r}(z):= \sum_{n_1>n_2>\cdots > n_r\geq0}
z^{b_1p^{n_1} + b_2 p^{n_2} + \cdots + b_r p^{n_r}};
\end{equation}
(ii) to show that the set consisting of $1$ and the series
(\ref{Eq:HDef}) with parameters $b_1, b_2, \ldots, b_r$ relatively
prime to $p$ are \textit{linearly independent over the ring
  $\mathbb{Z}[z, z^{-1}]$, as well as over the rings
  $(\mathbb{Z}/p^\gamma\mathbb{Z})[z,z^{-1}]$}; (iii) to prove that a series
$H_{b_1, b_2, \ldots, b_r}(z)$, without restrictions on its parameters
$b_1, b_2, \ldots, b_r$, can always \textit{be expressed as a linear
  combination over $\mathbb{Z}[z, z^{-1}]$} of these linearly
independent series; 
and (iv) to demonstrate that \textit{coefficient
  extraction from a series {\em (\ref{Eq:HDef})} with all parameters
  $b_1, b_2, \ldots, b_r$ relatively prime to $p$ can be done
  effectively}; cf.\ Equation~(\ref{eq:Phipot}),
Corollary~\ref{lem:Hind}, Proposition~\ref{conj:H}, and
Remark~\ref{rem:eff}. 

A detailed description of our method is given in
Section~\ref{sec:method}, while the remaining sections treat a number
of applications beyond the case where $p=2$: in
Section~\ref{sec:noncr}, we obtain congruences modulo arbitrary
$3$-powers for the number of non-crossing graphs with a given number
of vertices; Section~\ref{sec:Krew} deals with congruences modulo
$3$-powers for the number of Kreweras walks of given length in the
plane; Section~\ref{sec:FCat} treats the Fu{\ss}--Catalan numbers 
\[
F(n;k) := \frac{1}{n} \binom{kn}{n-1}
\] 
modulo powers of a prime number $p$, provided that the parameter~$k$
is itself a power of~$p$; finally, the last section studies the number 
\[
B(n;p) := \frac{p+1}{n((p-1)n +2)} \binom{pn}{n-1}
\]
of $p$-ary blossom trees with $n$ white nodes modulo arbitrary powers
of $p$. See, in particular, Theorems~\ref{thm:N}, \ref{thm:noncr27},
\ref{thm:K}, \ref{thm:K27}, \ref{thm:FCat}, \ref{thm:FCat27},
\ref{thm:Schaeff}, and \ref{thm:S27}.  

We would like to point out that Rowland and Yassawi \cite{RoYaAA} have
developed a completely different method for proving congruences.
Their method applies to diagonals of rational functions, and 
is based on the construction of automata. Our method is complementary:
there is a considerable intersection of applications (in particular,
if one also adds the variation \cite{KrMuAE} of the method presented
here), but neither can our method cover all diagonals of rational
functions nor can their method cover all differentially algebraic
series. Applications of our method not covered by \cite{RoYaAA}
can be found in \cite[Sections~8--14]{KaKMAA}.

\section{The $p$-power series $\Phi_p(z)$}
\label{sec:Phi}

The series $\Phi_p(z)$ in \eqref{eq:Phidef} 
is the principal character in the method for determining
congruences of recursive sequences modulo $p$-powers
which we describe in Section~\ref{sec:method}. 
For the sake of better readability, we shall usually suppress
the index~$p$ and simply write $\Phi(z)$ for $\Phi_p(z)$.
It is not hard to see that this series is transcendental over
$\mathbb Z[z]$ (this follows for instance from 
Lemma~\ref{lem:minpol} below). However, if the
coefficients of $\Phi(z)$ are considered modulo a $p$-power $p^\ga$,
then $\Phi(z)$ obeys a polynomial relation with coefficients that are
polynomials in $z$. The focus of this section is on what may be said 
concerning such polynomial relations, and, in particular, about those of
minimal length. In the proofs of Lemma~\ref{lem:minpol} and
Proposition~\ref{prop:minpol} below, we shall already 
make use of some fundamental results which will only be established
later in Section~\ref{sec:extr}. This will not create any circular
arguments since Lemma~\ref{lem:minpol} and
Proposition~\ref{prop:minpol} will not be needed in Section~\ref{sec:extr}.

\medskip
Here and in the sequel,
given power series (or Laurent series) $f(z)$ and $g(z)$,
we write 
$$f(z)=g(z)~\text {modulo}~p^\ga$$ 
to mean that the coefficients
of $z^i$ in $f(z)$ and $g(z)$ agree modulo~$p^\ga$ for all $i$.

We say that a polynomial $A(z,t)$ in 
$z$ and $t$ is {\it minimal for the modulus
$p^\ga$}, if it is monic (as a polynomial in $t$), 
has integral coefficients, satisfies
$A(z,\Phi(z))=0$~modulo~$p^\ga$, and there is no monic polynomial $B(z,t)$
with integral coefficients
of $t$-degree less than that of $A(z,t)$ with $B(z,\Phi(z))=0$~modulo~$p^\ga$.
(Minimal polynomials are not unique; see Remark~\ref{rem:min}.)
Furthermore, we let $v_p(\al)$ denote the $p$-adic valuation of the
integer $\al$, that is, the maximal exponent $e$ such that $p^e$
divides $\al$.

The lemma below provides a lower bound for the degree of a polynomial
that is minimal for the modulus $p^\ga$.

\begin{lemma} \label{lem:minpol}
If $A(z,t)$ is minimal for the modulus $p^\ga$, 
then the degree $d$ of $A(z,t)$ in $t$ satisfies $v_p(d!)\ge\ga$.
In particular, the series $\Phi(z)$ is transcendental over $\mathbb
Z[z]$.
\end{lemma}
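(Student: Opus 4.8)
The plan is to expand the left-hand side of the congruence $A(z,\Phi(z))=0$ modulo $p^\ga$ into the auxiliary series $H_{b_1,\dots,b_r}(z)$ of \eqref{Eq:HDef} and to isolate the coefficient of one carefully chosen such series. Write $A(z,t)=t^d+a_{d-1}(z)t^{d-1}+\cdots+a_0(z)$ with $a_i(z)\in\Z[z]$. Using the expansion of the powers of $\Phi(z)$ into the series \eqref{Eq:HDef} (see \eqref{eq:Phipot}), together with the reduction of arbitrary such series to those whose parameters are coprime to $p$ (Proposition~\ref{conj:H}), I would rewrite $A(z,\Phi(z))$ as a $\Z[z,z^{-1}]$-linear combination of $1$ and of the linearly independent series $H_{b_1,\dots,b_r}(z)$ with $\gcd(b_i,p)=1$.

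First I would pin down the coefficient of the specific series $H_{1,1,\dots,1}(z)$ with exactly $d$ parameters, all equal to~$1$; denote it $H^{(d)}(z)$. In $\Phi(z)^d=\sum_{n_1,\dots,n_d\ge0}z^{p^{n_1}+\cdots+p^{n_d}}$ the monomials with $n_1,\dots,n_d$ pairwise distinct are precisely those feeding into $H^{(d)}$, and each unordered choice of $d$ distinct exponents arises from exactly $d!$ orderings; hence $\Phi(z)^d$ contributes $d!\cdot H^{(d)}(z)$. The crucial point is that nothing else contributes: the lower powers $\Phi(z)^j$ with $j<d$ expand into series $H_{b_1,\dots,b_r}$ with at most $j<d$ parameters, and multiplication by the polynomials $a_j(z)$ keeps them in the $\Z[z,z^{-1}]$-span of such series; moreover, the ``collision'' monomials in $\Phi(z)^d$ (where some of the $n_i$ coincide) yield, after reduction, only series with fewer than $d$ parameters. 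Since the reduction procedure never raises the number of parameters, the coefficient of $H^{(d)}(z)$ in $A(z,\Phi(z))$ is the constant $d!$.

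Now I would invoke linear independence. Because $A(z,\Phi(z))=0$ modulo $p^\ga$, and because $1$ together with the series $H_{b_1,\dots,b_r}$ with parameters coprime to $p$ are linearly independent over $(\Z/p^\ga\Z)[z,z^{-1}]$ (Corollary~\ref{lem:Hind}), every coefficient in the above expansion vanishes modulo $p^\ga$; in particular the coefficient of $H^{(d)}(z)$ does, giving $d!\equiv0$ modulo $p^\ga$, that is, $v_p(d!)\ge\ga$. For the transcendence statement, suppose $\Phi(z)$ satisfied a nontrivial relation $\sum_{i=0}^d c_i(z)\Phi(z)^i=0$ over $\Z[z]$ with $c_d\ne0$; the very same computation shows that the coefficient of $H^{(d)}(z)$ equals $c_d(z)\,d!$, which by linear independence over $\Z[z,z^{-1}]$ (again Corollary~\ref{lem:Hind}) forces $c_d(z)\,d!=0$ and hence $c_d=0$, a contradiction.

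I expect the main obstacle to be the bookkeeping underlying the second paragraph: verifying rigorously that passage to the linearly independent basis never increases the number of parameters, so that $H^{(d)}(z)$ genuinely receives the contribution $d!$ from $\Phi(z)^d$ and nothing from the lower powers or the collision terms. This is exactly the place where the structural results of Section~\ref{sec:extr}---the expansion \eqref{eq:Phipot}, the reduction in Proposition~\ref{conj:H}, and the independence in Corollary~\ref{lem:Hind}---carry the weight, and I would lean on them rather than re-deriving the combinatorics of base-$p$ carries by hand.
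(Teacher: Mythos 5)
Your proof is correct and takes essentially the same route as the paper's: both expand $A(z,\Phi(z))$ via \eqref{eq:Phipot} and Proposition~\ref{conj:H} into the linearly independent series of Corollary~\ref{lem:Hind}, observe that the coefficient of $H_{1,1,\dots,1}(z)$ (with $d$ ones) is exactly $d!$ because neither the lower powers $\Phi^e(z)$, $e<d$, nor the collision terms can produce that series, and conclude $d!\equiv 0$ modulo $p^\ga$ by independence over $(\Z/p^\ga\Z)[z,z^{-1}]$. The only cosmetic difference is that the paper excludes contributions from lower powers via the bound $b_1+\cdots+b_s\le e<d$ on the \emph{sum} of the parameters, whereas you use the bound $s\le r$ on their \emph{number}; both bounds are supplied by Proposition~\ref{conj:H}, and your explicit treatment of a non-monic leading coefficient $c_d(z)$ in the transcendence claim is a harmless elaboration of what the paper leaves implicit.
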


\begin{proof}
Given positive integers $b_1,b_2,\dots,b_r$,
define the series $H_{b_1,b_2,\dots,b_r}(z)$ by
$$
H_{b_1,b_2,\dots,b_r}(z):=\sum_{n_1>n_2>\dots>n_r\ge0}
z^{b_1p^{n_1}+b_2p^{n_2}+\dots+b_rp^{n_r}}.
$$
By \eqref{eq:Phipot} and Proposition~\ref{conj:H}, a power
$\Phi^d(z)$ can be expressed as a linear combination of series
$H_{b_1,b_2,\dots,b_s}(z)$ and~$1$ with coefficients in $\Z[z]$. 
Furthermore, all $b_i$'s in these series are relatively prime to~$p$,
and $b_1+b_2+\dots+b_s\le d$.
In particular, we have
\begin{equation} \label{eq:Phid} 
\Phi^d(z)=d!\,H_{1,1,\dots,1}(z)+\text{other terms},
\end{equation}
with $d$ repetitions of $1$ in the index of $H$. Since, by
Corollary~\ref{lem:Hind}, 
the series $H_{b_1,b_2,\dots,b_s}(z)$ and~$1$ are linearly independent
over $(\Z/p^\ga\Z)[z]$, and
$H_{1,1,\dots,1}(z)$ (with $d$ repetitions of~$1$) does not appear
in the expansion of $\Phi^e(z)$ for $e<d$, a monic polynomial of
degree~$d$ in $\Phi(z)$ can only vanish modulo~$p^\ga$ if
$d!\equiv0$~(mod~$p^\ga$), or, equivalently, if $v_p(d!)\ge\ga$.
This establishes the assertion.
\end{proof}

\begin{proposition} \label{prop:minpol}
Minimal polynomials for the moduli
$p,p^2,p^3,\dots,p^{p+1}$ are
\begin{alignat*}2 
&t^p-t+z&&\text {\em modulo $p$},\\
&
(t^p-t+z)^2
&&\text {\em modulo $p^2$},\\
\multispan4{\hbox to 8cm{\leaderfill}}\\
&
(t^p-t+z)^p
&&\text {\em modulo $p^p$},\\
&
(t^p-t+z)^p-p^{p-1}(t^2-t+z)\quad 
&&\text {\em modulo $p^{p+1}$},\\
\end{alignat*}
\end{proposition}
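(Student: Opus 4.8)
The plan is to combine one elementary congruence for $\Phi(z)$ with the degree bound of Lemma~\ref{lem:minpol}, so that the \emph{minimality} of each displayed polynomial reduces to a degree count and only the \emph{vanishing} modulo the relevant $p$-power must be checked by hand.

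First I would record the basic relation $\Phi(z)^p\equiv\Phi(z)-z\pmod p$. This is the ``freshman's dream'': since $\Phi(z)=\sum_{n\ge0}z^{p^n}$ has all coefficients in $\{0,1\}$, the $p$-th power map is additive modulo $p$ and fixes these coefficients, whence $\Phi(z)^p\equiv\sum_{n\ge0}z^{p^{n+1}}=\Phi(z)-z\pmod p$. Setting $F(z):=\Phi(z)^p-\Phi(z)+z$, this says $F(z)=p\,U(z)$ for some $U(z)\in\Z[[z]]$. I would also note the functional equation $\Phi(z)=z+\Phi(z^p)$ (immediate from the defining sum) and the consequent identity $F(z^p)=p\,U(z^p)$.

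For the moduli $p^\gamma$ with $1\le\gamma\le p$, evaluating $(t^p-t+z)^\gamma$ at $t=\Phi(z)$ gives $F(z)^\gamma=p^\gamma U(z)^\gamma\equiv0\pmod{p^\gamma}$, so the congruence holds. Minimality then follows purely from degrees: by Lemma~\ref{lem:minpol} a minimal polynomial modulo $p^\gamma$ has $t$-degree $d$ with $v_p(d!)\ge\gamma$, and since $v_p(d!)=\fl{d/p}$ for $d<p^2$, the least admissible $d$ is $\gamma p$ when $1\le\gamma\le p-1$; for $\gamma=p$ one has $v_p((p^2-1)!)=p-1$ but $v_p((p^2)!)=p+1$, so the least admissible $d$ is $p^2$. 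In every case this equals $\deg_t(t^p-t+z)^\gamma=\gamma p$, so the displayed polynomials are minimal.

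The modulus $p^{p+1}$ is the only genuine obstacle. Here the least admissible degree is again $p^2$ (as $v_p((p^2)!)=p+1$), so I would seek a monic degree-$p^2$ polynomial $(t^p-t+z)^p-p^{p-1}R(z,t)$ with $\deg_tR<p^2$. Evaluating at $\Phi$ and cancelling $p^{p-1}$, the vanishing modulo $p^{p+1}$ is equivalent to the single congruence $R(z,\Phi(z))\equiv p\,U(z)^p\pmod{p^2}$. To compute the right-hand side I would lift Frobenius one step: the integral relation $U(z)^p-U(z^p)\in p\,\Z[[z]]$ gives $p\,U(z)^p\equiv p\,U(z^p)=F(z^p)\pmod{p^2}$, and the functional equation rewrites $F(z^p)$ as the polynomial $(\Phi(z)-z)^p-(\Phi(z)-z)+z^p$ in $\Phi(z)$, of $t$-degree $p<p^2$. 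This pins down the correction term $R(z,t)$ and establishes the congruence; indeed one then obtains the clean identity $(t^p-t+z)^p-p^{p-1}R(z,\Phi)=p^p\bigl(U(z)^p-U(z^p)\bigr)\equiv0\pmod{p^{p+1}}$, after which the degree count certifies minimality. The main work, and the step I expect to be delicate, is this one-step Frobenius lift modulo $p^2$ — equivalently, the control of the multinomial coefficients occurring in $(\Phi^p-\Phi+z)^p$ to second $p$-adic order — together with the verification that the correction stays below degree $p^2$, so that the leading term $t^{p^2}$, and hence minimality via Lemma~\ref{lem:minpol}, is preserved.
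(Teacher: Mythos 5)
Your treatment of the moduli $p,p^2,\dots,p^p$ is correct and is, in substance, the paper's own argument: the paper derives the base congruence $\Phi^p(z)-\Phi(z)+z\equiv 0\pmod p$ from the multinomial expansion \eqref{eq:Phipot} (this is equation \eqref{eq:PhiGl}) rather than from the Frobenius identity $\Phi^p(z)\equiv\Phi(z^p)=\Phi(z)-z\pmod p$, and then likewise raises it to powers; the minimality count that you make explicit --- by Lemma~\ref{lem:minpol} the degree of a minimal polynomial for $p^\gamma$ is at least the least $d$ with $v_p(d!)\ge\gamma$, which equals $\gamma p$ for $\gamma\le p$ --- is exactly the intended justification, left implicit in the paper.

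At the modulus $p^{p+1}$, however, your argument, while internally sound, proves a \emph{different} statement from the one displayed: your correction term is $R(z,t)=(t-z)^p-t+z+z^p$, i.e.\ you obtain
\[
(t^p-t+z)^p-p^{p-1}\bigl((t-z)^p-t+z+z^p\bigr)\;=\;p^p\bigl(U(z)^p-U(z^p)\bigr)\equiv 0\pmod{p^{p+1}}
\quad\text{at }t=\Phi(z),
\]
whereas the proposition asserts the correction term $t^2-t+z$. This mismatch is not a gap on your side; the displayed polynomial is in fact not a relation for $\Phi(z)$. For $p=3$ one has $\Phi^3(z)-\Phi(z)+z=3z^5+3z^7+\cdots$, so $(\Phi^3(z)-\Phi(z)+z)^3$ contains no monomial of degree below $15$, while $9(\Phi^2(z)-\Phi(z)+z)=9z^2-9z^3+\cdots$ starts with $9z^2\not\equiv0\pmod{81}$; hence the two sides cannot agree modulo $81$ (the same test rules out $t^p-t+z$ as a correction term). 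The flaw in the paper's proof is the step $H^p_{b_1,\dots,b_r}(z)=H_{b_1,\dots,b_r}(z)+O(p)$: Frobenius actually gives $H^p_{b_1,\dots,b_r}(z)=H_{b_1,\dots,b_r}(z^p)+O(p)$, and $H_{b_1,\dots,b_r}(z^p)$ omits precisely the summands with $n_r=0$. Your one-step lift $p\,U(z)^p\equiv p\,U(z^p)=F(z^p)\pmod{p^2}$, combined with $\Phi(z^p)=\Phi(z)-z$, accounts for exactly these missing terms, so your polynomial should be regarded as the corrected form of the proposition's last entry. Note that it is still monic of degree $p^2$, so the minimality assertion, and with it the numerical evidence for Conjecture~\ref{conj:1}, is unaffected by the correction.
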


\begin{proof}
By \eqref{eq:Phipot}, we have
\begin{align*}
\Phi^p(z)&=\sum_{r=1} ^p
\underset{b_1+\dots+b_r=p}{\sum _{b_1,\dots,b_r\ge1} ^{}}
\frac {p!}
{b_1!\,b_2!\cdots b_r!}H_{b_1,b_2,\dots,b_r}(z)\\
&=\sum_{n\ge0}z^{p^{n+1}}+p\sum_{r=2} ^p
\underset{b_1+\dots+b_r=p}{\sum _{b_1,\dots,b_r\ge1} ^{}}
\frac {(p-1)!}
{b_1!\,b_2!\cdots b_r!}H_{b_1,b_2,\dots,b_r}(z),
\end{align*}
or, equivalently,
\begin{equation} \label{eq:PhiGl} 
\Phi^p(z)-\Phi(z)+z=
p\sum_{r=2} ^p
\underset{b_1+\dots+b_r=p}{\sum _{b_1,\dots,b_r\ge1} ^{}}
\frac {(p-1)!}
{b_1!\,b_2!\cdots b_r!}H_{b_1,b_2,\dots,b_r}(z).
\end{equation}
Clearly, this implies the claim modulo~$p$, and, by potentiation, 
as well the claims modulo~$p^i$ for $i=2,3,\dots,p$.

Taking both sides of \eqref{eq:PhiGl} to the $p$-th power,
we obtain
$$
\left(\Phi^p(z)-\Phi(z)+z\right)^p=
p^p\sum_{r=2} ^p
\underset{b_1+\dots+b_r=p}{\sum _{b_1,\dots,b_r\ge1} ^{}}
\left(\frac {(p-1)!}
{b_1!\,b_2!\cdots b_r!}\right)^p
H^p_{b_1,b_2,\dots,b_r}(z)+O\left(p^{p+1}\right),
$$
where the expression $O\left(p^{p+1}\right)$ subsumes terms
all of whose coefficients are divisible by $p^{p+1}$.
If, in the sum over~$r$ on the right-hand side, we use
the congruences $N^p\equiv N$~(mod~$p$), which is valid for
any integer~$N$, and
$$
H^p_{b_1,b_2,\dots,b_r}(z)
=
H_{b_1,b_2,\dots,b_r}(z)+O\left(p\right),
$$
then, in combination with \eqref{eq:PhiGl} read from right to left,
we obtain the claim on the minimal polynomial modulo~$p^{p+1}$.
\end{proof}

\begin{remarknu}\label{rem:min}
Minimal polynomials are highly non-unique: for example, 
the polynomial
$$
(t^p-t+z)^2+p(t^p-t+z)
$$
is obviously also a minimal polynomial for the modulus $p^2$.
\end{remarknu}

Based on the observations in Proposition~\ref{prop:minpol}
and Lemma~\ref{lem:minpol}, we propose the following conjecture.

\begin{conjecture} \label{conj:1}
The degree of a minimal polynomial for the modulus $p^\ga$, $\ga\ge1$,
is the least $d$ such that $v_p(d!)\ge\ga$.
\end{conjecture}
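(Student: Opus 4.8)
The plan is to prove the two matching bounds separately. The lower bound is already in hand: Lemma~\ref{lem:minpol} shows that the $t$-degree $d$ of any minimal polynomial for the modulus $p^\ga$ must satisfy $v_p(d!)\ge\ga$, so that $d\ge d_0$, where $d_0=d_0(\ga):=\min\{d:v_p(d!)\ge\ga\}$. What remains is the matching upper bound: for every $\ga$ one must exhibit a \emph{monic} polynomial $A(z,t)$ of $t$-degree exactly $d_0(\ga)$, with integer coefficients, satisfying $A(z,\Phi(z))=0$~modulo~$p^\ga$. I would set this up as an induction on $\ga$, taking the polynomials of Proposition~\ref{prop:minpol} as the base cases $\ga\le p+1$, and keeping Legendre's formula $v_p(d!)=(d-s_p(d))/(p-1)$ at hand, with $s_p(d)$ the sum of the base-$p$ digits of $d$, in order to track how $d_0(\ga)$ grows.

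The construction I would attempt generalises the mechanism used to prove Proposition~\ref{prop:minpol}. Writing $G(t):=t^p-t+z$, relation~\eqref{eq:PhiGl} reads $G(\Phi)=pR$, where $R$ is an explicit $\Z[z]$-linear combination of the series $H_{b_1,\dots,b_r}(z)$. The idea is to build the required polynomials by repeatedly forming powers and products of already-constructed ones and then using the two ``Frobenius-type'' reductions $N^p\equiv N$~(mod~$p$) and the expansion of $H_{b_1,\dots,b_r}^p(z)$, together with the $H$-expansion~\eqref{eq:Phipot} of powers of $\Phi$ and the reduction of non-coprime indices in Proposition~\ref{conj:H}, to rewrite a high-valuation expression $p^a(\cdots)$ as $p^a$ times a \emph{low-degree} polynomial in $\Phi$ modulo a higher power of $p$. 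This is precisely the step by which $(t^p-t+z)^p$, which only guarantees vanishing modulo $p^p$, is corrected by the low-degree term $-p^{p-1}(t^2-t+z)$ to vanish modulo $p^{p+1}$ \emph{without} increasing its degree $p^2$. The phenomenon to exploit is that $v_p(d!)$ increases by $v_p(d)$ in passing from $d-1$ to $d$, so that at $d=p^k$ it jumps by $k$; at such places a single polynomial of degree $p^k$ can be corrected to vanish to extra powers of $p$ at no cost in degree, which is exactly what makes $d_0(\ga)$ stay constant over the corresponding ranges of $\ga$ (for instance $d_0(p)=d_0(p+1)=p^2$).

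The induction step therefore has to maintain two invariants simultaneously: the degree must equal $d_0(\ga)$, and every coefficient in the $H$-expansion of $A(z,\Phi)$ must be divisible by $p^\ga$. The main obstacle, and the reason the statement has remained a conjecture, is the arithmetic bookkeeping in the reduction steps: one must control the $p$-adic valuations of all the multinomial coefficients $\binom{d}{c_1,\dots,c_r}$ occurring in~\eqref{eq:Phipot}, of the coefficients produced when $H_{b_1,\dots,b_r}^p(z)$ is re-expanded, and of the coefficients arising when non-$p$-coprime indices are rewritten via Proposition~\ref{conj:H}, and to do so \emph{uniformly} across all stages of the recursion, so that the correction polynomials needed to gain each additional power of $p$ never push the degree past $d_0(\ga)$. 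Equivalently, in the linearly independent basis furnished by Corollary~\ref{lem:Hind} the question becomes a statement about the elementary divisors of the infinite coefficient matrix $\bigl(\binom{d}{c_1,\dots,c_r}\bigr)$ over $\Z/p^\ga\Z$: the conjecture asserts that $\Phi^{d_0}$ first enters the $\Z/p^\ga\Z$-span of $1,\Phi,\dots,\Phi^{d_0-1}$ exactly when $v_p(d_0!)\ge\ga$. Proving that \emph{all} of the obstructions can be cleared simultaneously precisely at this threshold — rather than merely the single leading obstruction $d!\,H_{1,\dots,1}$ handled in Lemma~\ref{lem:minpol} — is where I expect the real difficulty to lie.
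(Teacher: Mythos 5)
The statement you are trying to prove is Conjecture~\ref{conj:1}, which the paper itself leaves open: the authors explicitly state that they can only establish the lower bound (via Lemma~\ref{lem:minpol}), and the paper contains no proof of the conjecture for you to be compared against. Your proposal does not close this gap either. Its first half (the lower bound $d\ge d_0(\ga)$) is indeed already Lemma~\ref{lem:minpol}, and your reduction strategy for the upper bound essentially reproduces what the paper records in Item~(2) of Remark~\ref{rem:1}: it suffices to construct, for each $\de$, a monic polynomial $A_\de(z,t)$ of degree $p^\de$ with $A_\de(z,\Phi(z))=0$ modulo $p^{(p^\de-1)/(p-1)}$, the intermediate moduli being handled by products $\prod_\de A_\de^{\al_\de}(z,t)$, and $A_m^p(z,t)$ being an approximation to $A_{m+1}(z,t)$ that is ``off by a factor of $p$.'' So your plan is sound as far as it goes and is aligned with the paper's partial progress, but it is a plan, not a proof.

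The genuine gap is exactly the one you name yourself in your last paragraph: you never show that the correction terms exist. Concretely, after expanding $A_m^p(z,\Phi(z))$ in the basis of Corollary~\ref{lem:Hind} via \eqref{eq:Phipot} and Proposition~\ref{conj:H}, one must prove that the residual coefficients (which are divisible by $p^{(p^{m+1}-p)/(p-1)}$ but a priori not by $p^{(p^{m+1}-1)/(p-1)}$) can all be cancelled by adding a polynomial of degree $<p^{m+1}$ multiplied by $p^{(p^{m+1}-p)/(p-1)}$ --- equivalently, that the linear system over $\mathbb{F}_p$ described in Section~\ref{sec:extr} (in the paragraph preceding Lemma~\ref{lem:aiodd}) is solvable for every $m$. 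The paper explicitly notes that ``a priori, this system need not have a solution, but experience seems to indicate that it always does''; Proposition~\ref{prop:minpol} verifies solvability only for the first step ($m=1$, moduli up to $p^{p+1}$), where a single explicit correction $-p^{p-1}(t^2-t+z)$ suffices. Your appeal to ``the phenomenon to exploit'' --- that the jump of $v_p(d!)$ at $d=p^k$ allows degree-free corrections --- is precisely the assertion that needs proof, not a mechanism you can invoke. Without a uniform argument that the obstruction vector always lies in the image of the relevant coefficient matrix modulo $p$, your induction cannot be carried out, and the statement remains what the paper says it is: a conjecture.
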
 

\begin{remarknu} \label{rem:1}
(1) Given the $p$-ary expansion of $d$, say
$$d=d_0+d_1\cdot p+d_2\cdot
p^2+\cdots+d_r\cdot p^r, \quad 0\le d_i\le p-1,$$ 
by the well-known formula of
Legendre \cite[p.~10]{LegeAA}, we have
\begin{align}\notag
v_p(d!)&=\sum _{\ell=1} ^{\infty}\fl{\frac {d} {p^\ell}}=
\sum _{\ell=1} ^{\infty}\fl{\sum _{i=0} ^{r}d_i p^{i-\ell}}=
\sum _{\ell=1} ^{\infty}\sum _{i=\ell} ^{r}d_i p^{i-\ell}\\
\label{eq:Leg}
&=
\sum _{i=1} ^{r}\sum _{\ell=1} ^{i}d_i p^{i-\ell}=
\sum _{i=1} ^{r}d_i\frac {p^{i}-1} {p-1}=
\frac {d-s(d)} {p-1},
\end{align}
where $s(d)$ denotes the sum of digits of $d$ in its $p$-ary
expansion.
Consequently, an equivalent way of phrasing Conjecture~\ref{conj:1}
is to say that the degree of a minimal polynomial for the 
modulus $p^\ga$ is the least $d$ with $d-s(d)\ge(p-1)\ga$.

\medskip
(2)
We claim that, in order to establish Conjecture~\ref{conj:1}, 
it suffices to prove the conjecture for $\ga=\frac {p^{\de}-1} {p-1}$, 
$\de=1,2,\dots$. If we take into account
Lemma~\ref{lem:minpol} plus the above remark, this means that it is
sufficient to prove that, for each $\de\ge1$, there is
a polynomial $A_\de(z,t)$ of degree $p^{\de}$ such that
\begin{equation} \label{eq:minpga}
A_\de(z,\Phi(z))=0\quad \text {modulo }p^{(p^\de-1)/(p-1)}.
\end{equation}
For, arguing by induction, let us suppose that 
we have already constructed $A_1(z,t),\break
A_2(z,t),
\dots,A_m(z,t)$ satisfying \eqref{eq:minpga}.
Let $\al$ be a positive integer which is divisible by~$p$ and
$$\al=\al_1\cdot p+\al_2\cdot
p^2+\cdots+\al_m\cdot p^m, \quad 0\le \al_i\le p-1,$$ 
be its $p$-ary expansion.
Then we have
\begin{equation} \label{eq:betaind}
\prod _{\de=1} ^{m}A_{\de}^{\al_\de}(z,\Phi(z))=0\quad \text {modulo }
\prod _{\de=1} ^{m}p^{\al_\de(p^\de-1)/(p-1)}=
p^{(\al-s(\al))/(p-1)}.
\end{equation}
On the other hand, the degree of the left-hand side of
\eqref{eq:betaind} as a polynomial in $\Phi(z)$ is
$\sum _{\de=1} ^{m}\al_\de p^\de=\al$.

Let us put these observations together.
In view of \eqref{eq:Leg},
Lemma~\ref{lem:minpol} says that the degree of a minimal
polynomial for the modulus $p^\ga$ cannot be smaller than the
least integer, $d^{(\ga)}$ say, for which $d^{(\ga)}-s(d^{(\ga)})\ge(p-1)\ga$. 
(We point out that $d^{(\ga)}$
must be automatically divisible by~$p$.) If we take into account that
the quantity $\al-s(\al)$, as a function in $\al$, is weakly monotone
increasing in $\al$, then \eqref{eq:betaind} tells us that,
as long as 
$$d^{(\ga)}\le (p-1)p+(p-1)p^2+\dots+(p-1)p^m=p^{m+1}-p,$$
we have found a monic
polynomial of degree $d^{(\ga)}$, $B_\ga(z,t)$ say, for which
$B_\ga(z,\Phi(z))=0$ modulo~$p^\ga$, namely the left-hand side
of \eqref{eq:betaind} with $\al$ replaced by $d^{(\ga)}$, to wit
$$
B_\ga(z,t)=\prod _{\de=1} ^{m}A_{\de}^{d^{(\ga)}_\de}(z,t),
$$
where $d^{(\ga)}=d^{(\ga)}_1\cdot p+d^{(\ga)}_2\cdot
p^2+\cdots+d^{(\ga)}_m\cdot p^m$ is the $p$-ary expansion of 
$d^{(\ga)}$. Hence, it must necessarily be a minimal polynomial
for the modulus $p^\ga$.

Since 
$$p^{m+1}-p
-s(p^{m+1}-p)=p^{m+1}-p-(p-1)m,$$ 
we have thus found minimal
polynomials for all moduli $p^\ga$ with 
$\ga\le \frac {p^{m+1}-p} {p-1}-m$. 
Now we should note that the
quantity $\al-s(\al)$ makes a jump from $p^{m+1}-p-(p-1)m$ to $p^{m+1}-1$
when we move from $\al=p^{m+1}-p$ to $\al=p^{m+1}$ (the reader should 
recall that it suffices to consider $\al$'s which are divisible by~$p$). 
If we take $A_m^p(z,t)$, which has degree $p\cdot p^m=p^{m+1}$, 
then, by \eqref{eq:minpga}, 
we also have a minimal polynomial for the modulus 
$$\left(p^{(p^m-1)/(p-1)}\right)^p=p^{(p^{m+1}-p)/(p-1)}$$
and, in view of the preceding remark, as well for all moduli $p^\ga$
with $\ga$ between\break 
$\frac {p^{m+1}-p} {p-1}-m+1$ and $\frac
{p^{m+1}-p} {p-1}$. 

So, indeed, the first modulus for which we do not have a minimal
polynomial is the modulus $p^{(p^{m+1}-p)/(p-1)+1}=
p^{(p^{m+1}-1)/(p-1)}$. This is the role
which $A_{m+1}(z,t)$ (see \eqref{eq:minpga} with $m+1$ in place of 
$\de$) would have to play.

\smallskip
The arguments above show at the same time that, supposing that we have
already constructed $A_1(z,t),A_2(z,t),\dots,A_m(z,t)$, the polynomial 
$A^p_m(z,t)$ is a very close ``approximation" to the polynomial
$A_{m+1}(z,t)$ that we are actually looking for next,
which is only ``off" by a factor of $p$.
In practice, one can recursively compute polynomials $A_\de(z,t)$
satisfying \eqref{eq:minpga} by following the procedure outlined in
the next-to-last paragraph before Lemma~\ref{lem:aiodd} in the next
section. It is these computations (part of which are reported in
Proposition~\ref{prop:minpol}) which have led us to believe in the truth of 
Conjecture~\ref{conj:1}.
\end{remarknu}

\section{Coefficient extraction from powers of $\Phi(z)$}
\label{sec:extr}

In the next section we are going to describe a method for expressing
formal power series satisfying certain differential equations, after
the coefficients of the series have been reduced modulo $p^k$, 
as polynomials in the
$p$-power series $\Phi(z)$ (which has been discussed in the previous
section; for the definition see \eqref{eq:Phidef}), the
coefficients being Laurent polynomials in $z$. Such a method would be
without value if we could not, at the same time, provide a procedure
for extracting coefficients from powers of $\Phi(z)$. The description
of such a procedure is the topic of this section.

Clearly, a brute force expansion of a power $\Phi^K(z)$, where $K$ is
a given positive integer, yields
\begin{equation} \label{eq:Phipot}
\Phi^K(z)=\sum _{r=1} ^{K}
\underset{b_1+\dots+b_r=K}{\sum _{b_1,\dots,b_r\ge1} ^{}}\frac {K!}
{b_1!\,b_2!\cdots b_r!}H_{b_1,b_2,\dots,b_r}(z),
\end{equation}
where
$$
H_{b_1,b_2,\dots,b_r}(z):=\sum_{n_1>n_2>\dots>n_r\ge0}
z^{b_1p^{n_1}+b_2p^{n_2}+\dots+b_rp^{n_r}}.
$$
The expansion \eqref{eq:Phipot} is not (yet) suited for our purpose,
since, when $b_1,b_2,\dots,b_r$ vary over all possible choices such
that their sum is $K$, the series $H_{b_1,b_2,\dots,b_r}(z)$ are {\it not\/}
linearly independent over the ring $\Z[z,z^{-1}]$ 
of Laurent polynomials in $z$ over the integers\footnote{The
same is true for an arbitrary commutative unital 
ring in place of the ring $\Z$ of
integers.},
and, secondly, coefficient extraction from a series
$H_{b_1,b_2,\dots,b_r}(z)$ can be a hairy task if some of the $b_i$'s 
are divisible by~$p$.

However, we shall show (see Corollary~\ref{lem:Hind}) that, 
if we restrict to $b_i$'s which are {\it relatively prime to~$p$}, 
then the corresponding series $H_{b_1,b_2,\dots,b_r}(z)$,
together with the (trivial) series~$1$, 
{\it are} linearly independent over $\Z[z,z^{-1}]$, and there is an
efficient algorithm to express an arbitrary 
series $H_{b_1,b_2,\dots,b_s}(z)$, without any
restriction on the $b_i$'s, as a linear combination over
$\Z[z,z^{-1}]$ of $1$ and the former series (see Proposition~\ref{conj:H}). 
Since coefficient
extraction from a series $H_{b_1,b_2,\dots,b_r}(z)$ with all $b_i$'s
relatively prime to~$p$ 
is straightforward (see Remark~\ref{rem:eff}), this solves the problem of
coefficient extraction from powers of $\Phi(z)$.

As a side result, the procedure which we described in the previous
paragraph, and which will be substantiated below, in combination
with \eqref{eq:Phipot}, provides all the
means for determining minimal polynomials in the sense of
Section~\ref{sec:Phi}: as explained in Item~(2) of Remark~\ref{rem:1} 
at the end of that section, it suffices to find a minimal polynomial
for the modulus $p^{(p^\de-1)/(p-1)}$, $\de=1,2,\dots$. For doing this, we 
would take a minimal polynomial $A_{\de-1}(z,t)$ for the modulus
$p^{(p^{\de-1}-1)/(p-1)}$, expand $A^p_{\de-1}(z,t)$,
and replace each coefficient $c_{\al,\be}$ of a monomial
$z^\alpha t^\beta$ in $A^p_{\de-1}(z,t)$ by 
$c_{\al,\be}+p^{(p^{\de}-p)/(p-1)}x_{\al,\be}$, 
where $x_{\al,\be}$ is a variable, thereby obtaining a modified polynomial, 
$B_{\de-1}(z,t)$ say.
Now we would substitute $\Phi(z)$
for~$t$, so that we obtain $B_{\de-1}(z,\Phi(z))$.
Here, we express powers of $\Phi(z)$
in terms of the series $H_{a_1,a_2,\dots,a_r}(z)$ with all $a_i$'s
relatively prime to~$p$, and collect terms.
By reading the coefficients of $z^\ga H_{a_1,a_2,\dots,a_r}(z)$ in this
expansion of $B_{\de-1}(z,\Phi(z))$ and equating them to zero 
modulo $p^{(p^\de-1)/(p-1)}$,
we produce a system of linear equations
modulo $p^{(p^\de-1)/(p-1)}$ in the unknowns $x_{\al,\be}$. 
By the definition of $A_{\de-1}(z,t)$, after division by $p^{(p^\de-p)/(p-1)}$,
this system reduces to a system
modulo $p$, that is, to a linear system of equations over the field
with $p$~elements. A priori, this system need not have a solution,
but experience seems to indicate that it always does; see
Conjecture~\ref{conj:1}.

\medskip
We start with an auxiliary result pertaining to the uniqueness of
representations of integers as sums of powers of $p$ with
multiplicities, tailor-made for application to the series
$H_{a_1,a_2,\dots,a_r}(z)$.

\begin{lemma} \label{lem:aiodd}
Let $d,r,s$ be positive integers with $r\ge s,$ $c$ an integer
with $\vert c\vert\le d,$ and let 
$a_1,a_2,\dots,a_r$ respectively $b_1,b_2,\dots,b_s$ be two sequences of 
integers, none of the $a_i$'s or $b_i$'s divisible by~$p,$ 
with $1\le a_i\le d$ for $1\le i\le r,$ and $1\le b_i\le
d$ for $1\le i\le s$. If
\begin{equation} \label{eq:a2b2}
a_1p^{2rd}+a_2p^{2(r-1)d}+\dots+a_rp^{2d}=
b_1p^{n_1}+b_2p^{n_2}+\dots+b_sp^{n_s}+c
\end{equation}
for integers $n_1,n_2,\dots,n_s$ with $n_1>n_2>\dots>n_s\ge0,$ then
$r=s,$ $c=0,$ $a_i=b_i,$ and $n_i=2d(r+1-i)$ for $i=1,2,\dots,r$.
\end{lemma}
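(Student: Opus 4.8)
The plan is to read everything off from the base-$p$ expansion of the left-hand side $N:=a_1p^{2rd}+a_2p^{2(r-1)d}+\dots+a_rp^{2d}$. Write $\ell$ for the number of base-$p$ digits of $d$, so that $p^{\ell-1}\le d<p^{\ell}$; an elementary estimate gives $\ell\le d$ for all $p\ge2$, $d\ge1$, hence $\ell<2d$. Since $1\le a_i\le d<p^{\ell}$ and $p\nmid a_i$, each $a_i$ is a block of at most $\ell$ base-$p$ digits whose lowest digit is nonzero. Because the exponents $2(r+1-i)d$ are spaced $2d$ apart while the blocks have width at most $\ell\le d$, the base-$p$ expansion of $N$ consists of the $r$ blocks coming from $a_1,\dots,a_r$, placed at positions $2(r+1-i)d$ and separated by runs of at least $2d-\ell\ge d$ zeros; moreover the lowest $2d$ positions are zero and $v_p(N)=2d$. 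The whole statement will follow once I show that, when $\sum_jb_jp^{n_j}+c$ is formed, each block is produced by exactly one term $b_jp^{n_j}$ and that $c=0$. (Equivalently one can package the argument as an induction on $r$, peeling off the lowest block at $2d$, dividing by $p^{2d}$, and recursing, using $s-1\le r-1$.)

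The main obstacle, and the key technical step, is to control carries. For any threshold $m$ one has $\sum_{n_j<m}b_jp^{n_j}\le d\,\frac{p^m-1}{p-1}<\frac{d}{p-1}\,p^m$, so, together with $|c|\le d$, the carry generated at position $m$, namely $\fl{\bigl(\sum_{n_j<m}b_jp^{n_j}+c\bigr)/p^m}$, has absolute value at most $d$. A carry of absolute value at most $d<p^{\ell}$ occupies at most $\ell$ digit positions and then dies out; since $\ell\le d$ while every gap in the expansion of $N$ has length at least $2d-\ell\ge d\ge\ell$, no nonzero carry can cross a gap without creating a nonzero digit in a position where $N$ has a zero. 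This is precisely where the delicacy lies: a priori the $b_j$ and the error $c$ may be as large as $d$, but the factor $2$ in the spacing $2d$ guarantees gaps wider than any carry can bridge (a negative $c$ is handled identically, a borrow counting as a carry of value $-1$). Consequently the base-$p$ computation decouples: the digits inside any one block, and the zero runs flanking it, are determined solely by those terms $b_jp^{n_j}$ whose exponents lie in that block's window, with zero carry entering or leaving.

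It then remains to count. Each of the $r$ blocks of $N$ is nonzero and, by the decoupling, cannot arise from carries from neighbouring terms; hence each block must contain at least one exponent $n_j$, and exponents belonging to different blocks are distinct. Therefore $s\ge r$, and the hypothesis $s\le r$ forces $s=r$, with exactly one term assigned to each block and none left over to sit in a gap or below the lowest block. A single term $b_jp^{n_j}$ equal to a block $a_ip^{2(r+1-i)d}$ must, since $p$ divides neither $a_i$ nor $b_j$, satisfy $n_j=2(r+1-i)d$ and $b_j=a_i$ by uniqueness of the base-$p$ expansion. Finally, no term lies below the lowest block, so the zero digits in positions $0,1,\dots,2d-1$ must be produced by $c$ alone; as $|c|\le d<p^{2d}$, this forces $c=0$. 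Reindexing the sorted exponents yields $n_i=2d(r+1-i)$, $b_i=a_i$, $s=r$, and $c=0$.

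In short, the substance of the proof is the block/gap picture of $N$ together with the carry bound of the second paragraph; once carries are shown to be too small to cross the gaps, the rigidity ($s=r$, matching coefficients and exponents, $c=0$) is a counting argument in which $s\le r$ rules out both splitting a block into several terms and padding the representation with a canceling low term.
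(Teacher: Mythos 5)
Your first paragraph (the block/gap picture of $N$) and your carry bound are both correct --- the latter is exactly the estimate $0\le b\le d$ that appears in the paper's proof. The genuine gap is the ``decoupling'' step: the claim that a nonzero carry cannot cross a zero region, and hence that each block must contain at least one exponent $n_j$, is false. Your ``a carry occupies at most $\ell$ digit positions and then dies out'' reasoning tacitly assumes that nothing else is being added inside the zero region; but the $n_j$ are not required to lie in block windows, and terms sitting inside a zero region can, together with $c$, sum to an \emph{exact} multiple of a high power of $p$, producing zero digits throughout that region while emitting a nonzero carry that manufactures an entire block. Concretely, take $p=3$, $d=2$, $r=1$, $a_1=1$, so $N=3^4$: then $3^4=2\cdot3^3+2\cdot3^2+2\cdot3+2+1$ is a representation in which every hypothesis on the $b_j$ and on $c$ holds ($b_j=2\le d$ and coprime to $p$, strictly decreasing exponents, $\vert c\vert=1\le d$), yet the unique block contains no exponent at all: its digit is created by a carry that traverses the whole zero region of positions $0,1,2,3$ without leaving a nonzero digit behind. (For $p=2$, $d=1$ the same phenomenon is $4=2+1+1$.) These representations have $s>r$, as they must, since the lemma is true; but that is precisely the problem: you derive ``each block contains an exponent, hence $s\ge r$'' \emph{before} invoking the hypothesis $s\le r$, so that derivation would have to be valid for arbitrary $s$ --- and the examples show it is not. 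The decoupling is a consequence of the lemma, not a stepping stone towards it.

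For this reason the proof cannot be completed by a purely local digit/carry analysis with the counting done only at the end; the hypothesis $r\ge s$ must do work at every scale. That is how the paper proceeds: induction on $r$, peeling off the bottom block, replacing all terms with $n_j<2d$ together with $c$ by a single new constant $b-a_R$ (bounded by $d$ in absolute value, via essentially your carry estimate), and applying the induction hypothesis --- whose statement again contains the requirement ``number of blocks $\ge$ number of terms'' --- to the peeled equation; only this repeated use of the counting hypothesis forces exactly one term to remain for the bottom block, after which the $r=1$ case analysis finishes the proof. Your closing parenthesis (``peel off the lowest block \dots\ using $s-1\le r-1$'') presupposes exactly the one-term-per-block matching that is at issue: in the correct bookkeeping the peeled equation has $t$ terms, where $t$ is the number of exponents $n_j\ge 2d$, and it is the induction hypothesis, not an a priori matching, that yields $t=r-1$.
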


\begin{proof}
We use induction on $r$.

First, let $r=1$. Then $s=1$ as well, and \eqref{eq:a2b2} becomes
\begin{equation} \label{eq:a2b2A}
a_1p^{2d}=b_1p^{n_1}+c.
\end{equation}
If $n_1>2d$, then the above equation implies
\begin{equation} \label{eq:a1c} 
a_1p^{2d}\equiv c\quad \text {modulo }p^{2d+1}.
\end{equation}
Consequently, the integer $c$ must be divisible by $p^{2d}$.
By assumption, we have $\vert c\vert\le d$. Since $d<p^{2d}$,
the only possibility is that $c$ vanishes. But then it follows from 
\eqref{eq:a1c} that $a_1$ is divisible by~$p$, a contradiction.

If $d<n_1< 2d$, then it follows from \eqref{eq:a2b2A} that $c$ must
be divisible by $p^{n_1}$. Again by assumption, we have 
$\vert c\vert\le d<p^d<p^{n_1}$, so that $c=0$. But then
\eqref{eq:a2b2A} cannot be satisfied since $b_1$ is
assumed not to be divisible by~$p$.

If $0\le n_1\le d$, then we estimate
$$
b_1p^{n_1}+c\le d\left(p^{d}+1\right)\le (p^d-1)(p^d+1)<p^{2d},
$$
which is again a contradiction to \eqref{eq:a2b2A}. 

The only remaining possibility is $n_1=2d$. If this is substituted in
\eqref{eq:a2b2A} and the resulting equation is combined with 
$\vert c\vert\le d<p^{2d}$, then the conclusion is that the equation
can only be satisfied if $c=0$ and $a_1=b_1$, in accordance with
the assertion of the lemma.

\medskip
We now perform the induction step. We assume that the assertion of
the lemma is established for all $r<R$, and we want to show that this
implies its validity for $r=R$. Let $t$ be maximal such that $n_t\ge
2d$. Then reduction of \eqref{eq:a2b2} modulo $p^{2d}$ yields
\begin{equation} \label{eq:a2b2B}
b_{t+1}p^{n_{t+1}}+b_{t+2}p^{n_{t+2}}+\dots+b_sp^{n_s}+c\equiv 0\quad 
\text {modulo }p^{2d}.
\end{equation}
Let us write $b\cdot p^{2d}$ for the left-hand side in
\eqref{eq:a2b2B}. Then, by dividing \eqref{eq:a2b2} (with $R$ instead
of $r$) by $p^{2d}$, we obtain
\begin{equation} \label{eq:a2b2C}
a_1p^{2(R-1)d}+a_2p^{2(R-2)d}+\dots+a_{R-1}p^{2d}=
b_1p^{n_1-2d}+b_2p^{n_2-2d}+\dots+b_tp^{n_t-2d}+b-a_R.
\end{equation}
We have
\begin{multline*}
0\le b\le
p^{-2d}d\left(p^{2d-1}+p^{2d-2}+\dots+p^{2d-s+t}+1\right)\\
\le p^{-2d}d\left(\tfrac {1} {p-1}(p^{2d}-p^{2d-s+t})+1\right)\le d.
\end{multline*}
Consequently, we also have $\vert b-a_R\vert\le d$.
This means that we are in a position to apply the induction
hypothesis to \eqref{eq:a2b2C}. The conclusion is that
$t=R-1$, $b-a_R=0$, $a_i=b_i$, and $n_i=2d(R+1-i)$ for
$i=1,2,\dots,R-1$. If this is used in \eqref{eq:a2b2} with $r=R$,
then we obtain
$$
a_Rp^{2d}=c
$$
or
$$
a_Rp^{2d}=b_Rp^{n_R}+c,
$$
depending on whether $s=R-1$ or $s=R$. The first case is absurd, since
$c\le d<p^{2d}\le a_Rp^{2d}$. On the other hand, the second case
has already been
considered in \eqref{eq:a2b2A}, and we have seen there that it
follows that $c=0$, $a_R=b_R$, and $n_R=2d$.

This completes the proof of the lemma.
\end{proof}

The announced independence of the series $H_{a_1,a_2,\dots,a_r}(z)$
with all $a_i$'s relatively prime to~$p$ is now an easy consequence.

\begin{corollary} \label{lem:Hind}
For any commutative unital ring $R,$
the series $H_{a_1,a_2,\dots,a_r}(z),$ with all $a_i$'s relatively
prime to~$p,$ together with the series $1$ are
linearly independent over $R[z,z^{-1}]$. In particular,
they are linearly independent over $(\Z/p\Z)[z,z^{-1}],$ 
over $(\Z/p^\ga\Z)[z,z^{-1}]$ for an arbitrary positive integer
$\ga,$ and over $\Z[z,z^{-1}]$.
\end{corollary}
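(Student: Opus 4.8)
The plan is to reduce the claimed linear independence over an arbitrary ring $R[z,z^{-1}]$ to the arithmetic uniqueness statement already proved in Lemma~\ref{lem:aiodd}. Suppose we have a nontrivial relation
\begin{equation*}
c_0(z)\cdot 1+\sum_{(a_1,\dots,a_r)} c_{a_1,\dots,a_r}(z)\,H_{a_1,a_2,\dots,a_r}(z)=0,
\end{equation*}
where the sum runs over finitely many index tuples with all $a_i$ relatively prime to~$p$, and each coefficient $c_{a_1,\dots,a_r}(z)\in R[z,z^{-1}]$. The strategy is to show that each coefficient must vanish by exhibiting, for any prescribed tuple $(a_1,\dots,a_r)$ and any monomial $z^{c}$ appearing in its coefficient, an exponent $N$ of $z$ in the whole expansion that \emph{only} the term $z^{c}H_{a_1,\dots,a_r}(z)$ can contribute to. Isolating the coefficient of $z^N$ then forces the corresponding coefficient in $R$ to be zero.

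First I would let $d$ be a bound on the absolute values of all exponents of $z$ occurring in all the coefficient Laurent polynomials $c_{a_1,\dots,a_r}(z)$, and on all the $a_i$ themselves. For a fixed target tuple $(a_1,\dots,a_r)$ with coefficient containing the monomial $c'z^{c}$ (where $c'\in R$ and $|c|\le d$), I would look at the exponent
\begin{equation*}
N=a_1p^{2rd}+a_2p^{2(r-1)d}+\dots+a_rp^{2d}+c,
\end{equation*}
which corresponds to choosing the ``widely spaced'' term $n_i=2d(r+1-i)$ in $H_{a_1,\dots,a_r}(z)$ together with the monomial $z^c$ from its coefficient. By definition of $H$, the exponents arising from a generic term $z^{c''}H_{b_1,\dots,b_s}(z)$ are exactly $b_1p^{n_1}+\dots+b_sp^{n_s}+c''$ with $n_1>\dots>n_s\ge0$ and $|c''|\le d$. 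Lemma~\ref{lem:aiodd} says precisely that the equation $N=b_1p^{n_1}+\dots+b_sp^{n_s}+c''$ (with these constraints) forces $s=r$, $c''=c$, $b_i=a_i$, and the spacing $n_i=2d(r+1-i)$. Hence $z^N$ receives a contribution \emph{only} from the monomial $c'z^c$ inside the coefficient of $H_{a_1,\dots,a_r}(z)$, so the coefficient of $z^N$ in the whole relation equals $c'$; vanishing of the relation forces $c'=0$ in $R$. The contribution of the constant term $c_0(z)\cdot 1$ is handled by noting its exponents are bounded by $d<p^{2d}\le N$, so it cannot reach $N$.

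The main subtlety, and the one point where Lemma~\ref{lem:aiodd} must be invoked in exactly its stated generality, is ensuring that the hypotheses of that lemma are met: both index tuples consist of integers not divisible by~$p$ with entries in $[1,d]$, and the ``error'' term $c''-0$ (or $c-c''$) stays within $[-d,d]$. Since we chose $d$ to bound all $a_i$ and all exponent magnitudes simultaneously, these constraints hold uniformly across all tuples in the finite sum, so the lemma applies to every competing term at once. Running this argument over each tuple and each monomial in its coefficient kills every coefficient, establishing linear independence over $R[z,z^{-1}]$. The three named special cases — over $(\Z/p\Z)[z,z^{-1}]$, over $(\Z/p^\ga\Z)[z,z^{-1}]$, and over $\Z[z,z^{-1}]$ — are then immediate instances of the general statement with $R=\Z/p\Z$, $R=\Z/p^\ga\Z$, and $R=\Z$ respectively. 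I expect essentially no further obstacle once the target exponent $N$ is chosen correctly; the only thing to watch is that the spacing factor $2d$ in the exponents of $p$ is generous enough to separate every competing representation, which is exactly what Lemma~\ref{lem:aiodd} guarantees.
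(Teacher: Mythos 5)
Your overall strategy is the paper's: isolate a ``widely spaced'' exponent and invoke Lemma~\ref{lem:aiodd} to show that only one term of the relation can contribute to it. But your application of the lemma is not justified, and the gap sits exactly at the one hypothesis you never mention: Lemma~\ref{lem:aiodd} requires $r\ge s$, i.e.\ the target tuple must be at least as long as every competing tuple. You propose to run the argument ``over each tuple and each monomial,'' claiming ``the lemma applies to every competing term at once.'' For a tuple $(a_1,\dots,a_r)$ that is \emph{not} of maximal length among those occurring in the relation, this is false: a longer competitor can genuinely hit the target exponent. Concretely, take $p=2$ and $d=1$: the target exponent for the tuple $(1)$ is $N=1\cdot p^{2d}=4$, yet $z\,H_{1,1}(z)$ also contains $z^4$, because $2^1+2^0+1=4$, and the tuple $(1,1)$ with shift $c''=1$ satisfies every hypothesis you listed. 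This is precisely why the paper's proof chooses an index $i_0$ with $r_{i_0}$ maximal and kills only \emph{that} coefficient, deriving a contradiction; to kill all coefficients one must proceed in decreasing order of tuple length (or argue by contradiction as the paper does), not tuple by tuple in arbitrary order.

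There is a second, smaller, inaccuracy in your verification of the hypotheses. With your target exponent $N=a_1p^{2rd}+\dots+a_rp^{2d}+c$, matching against a competitor $z^{c''}H_{b_1,\dots,b_s}(z)$ means applying the lemma with additive constant $c''-c$, and you only know $|c''-c|\le 2d$, whereas the lemma demands the additive constant be at most $d$ in absolute value. The paper sidesteps this by first multiplying the relation by a suitable power of $z$ so that the coefficient of the distinguished (maximal-length) tuple has a nonzero \emph{constant} term; then the target shift is $c=0$ and the discrepancy is just $c''$ with $|c''|\le d$. Alternatively you could enlarge the parameter, applying the lemma with $2d$ in place of $d$ (so the spacing becomes $p^{4rd},\dots,p^{4d}$). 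This second defect is repairable by adjusting constants; the first one requires restructuring the argument around tuples of maximal length and is a genuine gap as written.
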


\begin{proof}
Let us suppose that
\begin{equation} \label{eq:Hlincomb}
q_0(z)+\sum _{i=1}
^{N}q_i(z)H_{a_1^{(i)},a_2^{(i)},\dots,a_{r_i}^{(i)}}(z)=0,
\end{equation} 
where the $q_i(z)$'s are non-zero Laurent polynomials in $z$
over $R$, the $r_i$'s are positive integers, and 
$a_j^{(i)}$, $j=1,2,\dots,r_i$, $i=1,2,\dots,N$, are integers
relatively prime to~$p$. 
We may also assume that the tuples
$(a_1^{(i)},a_2^{(i)},\dots,a_{r_i}^{(i)})$, $i=1,2,\dots,N$, 
are pairwise distinct. Choose $i_0$ such
that $r_{i_0}$ is maximal among the $r_i$'s. Without loss of
generality, we may assume that the coefficient of $z^0$ in $q_{i_0}(z)$
is non-zero (otherwise we could multiply both sides of 
\eqref{eq:Hlincomb} by an appropriate power of $z$). Let $d$ be the
maximum of all $a_j^{(i)}$'s and the absolute values of exponents of
$z$ appearing in monomials with non-zero coefficient in
the Laurent polynomials $q_i(z)$, $i=0,1,\dots,N$. Then, according to
Lemma~\ref{lem:aiodd} with $r=r_{i_0}$, $a_j=a_j^{(i_0)}$,
$j=1,2,\dots,r_{i_0}$, the coefficient of
$$
z^{a_1^{(i_0)}p^{2rd}+a_2^{(i_0)}p^{2(r-1)d}+\dots+a_r^{(i_0)}p^{2d}}
$$
is $1$ in $H_{a_1^{(i_0)},a_2^{(i_0)},\dots,a_{r_{i_0}}^{(i_0)}}(z)$,
while it is zero in series 
$z^eH_{a_1^{(i_0)},a_2^{(i_0)},\dots,a_{r_{i_0}}^{(i_0)}}(z)$, where
$e$ is a non-zero integer with $\vert e\vert\le d$, and in all other series
$z^eH_{a_1^{(i)},a_2^{(i)},\dots,a_{r_i}^{(i)}}(z)$,
$i=1,\dots,\break i_0-1,i_0+1,\dots,N$, where $e$ is a (not necessarily
non-zero) integer with $\vert e\vert\le d$.
This contradiction to \eqref{eq:Hlincomb} establishes
the assertion of the corollary.
\end{proof}

\begin{remarknu} \label{rem:eff}
Coefficient extraction from a series $H_{a_1,a_2,\dots,a_{r}}(z)$
with all $a_i$'s relatively prime to~$p$ 
is straightforward: if we want to know whether
$z^M$ appears in $H_{a_1,a_2,\dots,a_{r}}(z)$, that is, whether we can
represent $M$ as
$$
M=a_1p^{n_1}+a_2p^{n_2}+\dots+a_rp^{n_r}
$$
for some $n_1,n_2,\dots,n_r$ with $n_1>n_2>\dots>n_r\ge0$, then
necessarily $n_r=v_p(M)$, $n_{r-1}=v_p(M-a_rp^{n_r})$, etc.
The term $z^M$ appears in $H_{a_1,a_2,\dots,a_{r}}(z)$ if, and only
if, the above process terminates after {\it exactly} $r$ steps.
This means, that, with $n_r,n_{r-1},\dots,n_1$ constructed as above, 
we have
$$
M-\left(a_sp^{n_s}+\dots+a_{r-1}p^{n_{r-1}}+a_rp^{n_r}\right)>0
$$
for $s>1$, and
$$
M-\left(a_1p^{n_1}+\dots+a_{r-1}p^{n_{r-1}}+a_rp^{n_r}\right)=0.
$$
It should be noted that, given $a_1,a_2,\dots,a_r$, this procedure of
coefficient extraction needs at most $O(\log M)$ operations, that is,
its computational complexity is linear.
\end{remarknu}

It remains to show that an arbitrary series $H_{b_1,b_2,\dots,b_s}(z)$
can be expressed as a linear combination over $\Z[z,z^{-1}]$ of
the series $1$ and 
the series $H_{a_1,a_2,\dots,a_r}(z)$, where all $a_i$'s are 
relatively prime to~$p$.

\begin{proposition} \label{conj:H}
For any positive integers $b_1,b_2,\dots,b_r,$ the series
$H_{b_1,b_2,\dots,b_r}(z)$
can be expressed as a linear combination
over $\Z[z]$ of the series $1$ and series of the form
$H_{a_1,a_2,\dots,a_s}(z),$ where all $a_i$'s are 
relatively prime to~$p,$ $s\le r,$ and
$a_1+a_2+\dots+a_s\le b_1+b_2+\dots+b_r$.
\end{proposition}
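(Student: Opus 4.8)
The engine of the whole reduction is a single ``carry'' identity. Splitting the defining sum for $H_{a_1,a_2,\dots,a_r}(z)$ according to whether the top gap $n_1-n_2$ equals $1$ or is at least~$2$, and using $a_1p^{n_2+1}+a_2p^{n_2}=(pa_1+a_2)p^{n_2}$ in the first case and $a_1p^{n_1}=(pa_1)p^{\,n_1-1}$ in the second, gives
\begin{equation*}
H_{a_1,a_2,\dots,a_r}(z)=H_{pa_1,a_2,\dots,a_r}(z)+H_{pa_1+a_2,a_3,\dots,a_r}(z),
\end{equation*}
or, solved for the leading series and specialised to $a_1=c$,
\begin{equation*}
H_{pc,a_2,\dots,a_r}(z)=H_{c,a_2,\dots,a_r}(z)-H_{pc+a_2,a_3,\dots,a_r}(z);
\end{equation*}
the degenerate single-index case reads $H_{pc}(z)=H_c(z)-z^c$. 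All coefficients are genuine polynomials in $z$, and the second auxiliary series on the right has one index fewer than the one on the left.

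The plan is to argue by induction, the primary parameter being the number $r$ of indices and the secondary parameter the sum $B:=b_1+b_2+\dots+b_r$. First I would clear the leading index: as long as $p\mid b_1$, write $b_1=pc$ and apply the carry identity. Each application spawns a series $H_{pc+b_2,b_3,\dots,b_r}(z)$ with only $r-1$ indices, disposed of by the induction on~$r$, and replaces $b_1=pc$ by~$c$, strictly lowering $v_p(b_1)$ while the surviving $r$-index series acquires a strictly smaller sum. After finitely many steps the leading index is relatively prime to~$p$.

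It remains to clear the $p$-divisibilities among the non-leading indices, where the clean leading carry is no longer available. Instead one splits on the gap \emph{below} the leftmost index $b_j$ with $p\mid b_j$: setting $m=n_j+1$ turns $b_jp^{n_j}=(b_j/p)p^{m}$ and forces simultaneously $n_{j-1}\ge m$ (permitting a merge with the index above) and $m\ge n_{j+1}+2$ (a gap of size two below). Resolving these two boundary effects by inclusion--exclusion expresses $H_{b_1,\dots,b_r}(z)$ through (i) a series with the same $r$ indices but strictly smaller sum, covered by the secondary induction, and (ii) several series with $r-1$ indices, covered by the primary induction. Since the number of indices never increases and only non-negative powers of~$z$ enter the coefficients, the assertions $s\le r$ and ``coefficients in~$\Z[z]$'' come out automatically; termination and hence existence of \emph{some} such expansion is thereby clear.

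The main obstacle is the sharp bound $a_1+\dots+a_s\le b_1+\dots+b_r$, and the trouble is localised in step~(ii). The gap of size two produced below the merged index, when converted back to standard form, re-introduces a factor of~$p$ onto an index (a first gap $\ge 2$ is literally $H_{pa_1,a_2,\dots}(z)$, and interior gaps propagate the same effect through the carry identity), so that the index-sum is inflated \emph{above}~$B$ at intermediate stages. Consequently the per-term sum bound genuinely fails along the way, and $\sum a_i\le\sum b_i$ is recovered only after the inflated contributions cancel: already in the smallest mixed example, reducing $H_{1,2,1}(z)$ for the prime $p=2$ produces a spurious $H_{5}(z)$ (of index-sum $5>4$) that is killed by an equal and opposite copy, leaving $H_{1,1,1}(z)+H_{1,1}(z)-H_{3}(z)-H_{1,3}(z)$. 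The crux of a complete proof is therefore to organise the reduction --- by fixing the order in which the indices divisible by~$p$ are cleared and strengthening the inductive hypothesis --- so that these cancellations are built in and the inequality $a_1+\dots+a_s\le b_1+\dots+b_r$ is preserved at every step; the linear independence supplied by Corollary~\ref{lem:Hind} then guarantees that the resulting expansion is the unique one.
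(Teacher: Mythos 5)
Your overall strategy --- clearing $p$-divisible indices via carry/merge identities and inducting lexicographically on (number of indices, index sum) --- is exactly the strategy of the paper's proof, and your leading-index identities $H_{pc,a_2,\dots,a_r}(z)=H_{c,a_2,\dots,a_r}(z)-H_{pc+a_2,a_3,\dots,a_r}(z)$ and $H_{pc}(z)=H_c(z)-z^c$ are precisely the $h=1$ cases of the identities \eqref{eq:Rek} (due to Hou) used there. The genuine gap is in your treatment of an interior $p$-divisible index, and it is the one you flag yourself: you resolve the constraints $n_{j-1}\ge m$ and $m\ge n_{j+1}+2$ by relaxing the gap-two condition \emph{separately} inside the merge case $n_{j-1}=m$ and the non-merge case $n_{j-1}>m$; each relaxation produces its own correction term, these corrections have index sum exceeding $B$ (your spurious $H_5$), and you never prove that they cancel in general --- you verify one example and then explicitly leave the systematic cancellation as ``the crux of a complete proof.'' Since the bound $a_1+\cdots+a_s\le b_1+\cdots+b_r$ is part of the statement (and is exactly what the proof of Lemma~\ref{lem:minpol} relies on), your proposal as written establishes termination, $s\le r$, and integrality of the coefficients, but not Proposition~\ref{conj:H} itself.

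The missing idea is not a strengthened induction but a different grouping of the inclusion--exclusion, which is what \eqref{eq:Rek} encodes: subtract the slice $m=n_{j+1}+1$ \emph{once from the union} of your two cases, rather than once from each case. Concretely,
\begin{equation*}
\{n_{j-1}\ge m\ge n_{j+1}+2\}
=\Bigl(\{n_{j-1}>m>n_{j+1}\}\sqcup\{n_{j-1}=m>n_{j+1}\}\Bigr)
\setminus\{n_{j-1}\ge m=n_{j+1}+1\},
\end{equation*}
and on each of the three regions the sum is a \emph{standard} $H$-series with no residual gap condition, giving
\begin{multline*}
H_{b_1,\dots,b_{j-1},pb'_j,b_{j+1},\dots,b_r}(z)
=H_{b_1,\dots,b_{j-1},b'_j,b_{j+1},\dots,b_r}(z)\\
+H_{b_1,\dots,b_{j-2},b_{j-1}+b'_j,b_{j+1},\dots,b_r}(z)
-H_{b_1,\dots,b_{j-1},pb'_j+b_{j+1},b_{j+2},\dots,b_r}(z).
\end{multline*}
The first two terms have index sum $B-(p-1)b'_j<B$ (with $r$ and $r-1$ indices, respectively), while the third has index sum exactly $B$ but only $r-1$ indices; so every term separately satisfies the required bound, and your lexicographic induction on $(r,B)$ goes through verbatim, with no cancellations to organise. (On your example this gives at once $H_{1,2,1}=H_{1,1,1}+H_{2,1}-H_{1,3}$ and then $H_{2,1}=H_{1,1}-H_3$; no $H_5$ ever arises.) The paper additionally takes $j$ \emph{maximal} with $p\mid b_j$, which guarantees that the merged index $pb'_j+b_{j+1}$ is already coprime to~$p$; under the $(r,B)$-induction this choice is convenient but not essential.
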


\begin{proof}
Given $H_{b_1,b_2,\dots,b_r}(z)$
where not all $b_i$'s are relatively
prime to~$p$, we let $h$ be maximal
such that $b_h\equiv0$~(mod~$p$). If there is no such~$h$, then
there is nothing to do. If there is,
then we apply the appropriate case of
\begin{multline} \label{eq:Rek}
H_{b_1,\dots,b_{h-1},pb'_h,b_{h+1},\dots,b_r}(z)
\\=
\begin{cases} 
H_{b_1,\dots,b_{h-1},b'_h,b_{h+1},\dots,b_r}(z)
+H_{b_1,\dots,b_{h-1}+b'_h,b_{h+1},\dots,b_r}(z)
-H_{b_1,\dots,b_{h-1},pb'_h+b_{h+1},\dots,b_r}(z),\\
&\hskip-2.5cm\text{if $1<h<r$,}\\
H_{b'_1,b_2,\dots,b_r}(z)
-H_{pb'_1+b_2,\dots,b_r}(z),&\hskip-2.5cm\text{if $1=h<r$,}\\
H_{b_1,\dots,b_{h-1},b'_h}(z)+H_{b_1,\dots,b_{h-1}+b'_h}(z)
-z^{b'_h}H_{b_1,\dots,b_{h-1}}(z),&\hskip-2.5cm\text{if $1<h=r$,}\\
H_{b'_1}(z)-z^{b'_1},&\hskip-2.5cm\text{if }1=h=r.
\end{cases}
\end{multline}
These identities are due to Hou \cite{HouQAA}, and they
are straightforward to verify. The reduction \eqref{eq:Rek}
is now applied to the series which arose during the first
application of \eqref{eq:Rek}. This process is repeated until
all series $H_{a_1,a_2,\dots,a_s}(z)$ in the obtained expression
have the property that each index $a_i$ is relatively prime to~$p$.
From \eqref{eq:Rek}, it is obvious that all these series satisfy
$a_1+a_2+\dots+a_s\le b_1+b_2+\dots+b_r$.
This proves the assertions of the proposition.
\end{proof}

To conclude this section, let us provide an illustration of the above
discussion. We set ourselves the task of determining the coefficient
of $z^{297398301914493}$ in $\Phi^5(z)$ for $\Phi(z)=\sum_{n\ge0}z^{3^n}$.
In order to accomplish this task, we first express $\Phi^5(z)$ in
terms of series $H_{a_1,\dots,a_r}(z)$ with all $a_i$'s being 
relatively prime to~$3$. 
Indeed, by means of the expansion \eqref{eq:Phipot} and 
the algorithm described in the proof of Proposition~\ref{conj:H},
we have
\begin{multline}
\Phi^5(z)=
 120 H_{1, 1, 1, 1, 1}(z)
+ 60 H_{2, 1, 1, 1} (z)
+ 60 H_{1, 2, 1, 1} (z)
+ 60 H_{1, 1, 2, 1} (z)
+ 60 H_{1, 1, 1, 2} (z)\\
+ 30 H_{2, 2, 1} (z)
+ 30 H_{2, 1, 2} (z)
+ 30 H_{1, 2, 2} (z)
- 15 H_{4, 1} (z)
- 15 H_{1, 4} (z)
- 9 H_{5} (z)\\
+ 60 H_{1, 1, 1}(z) 
+ 30 H_{2, 1} (z)
+ 30 H_{1, 2} (z)
- 20 z H_{1, 1} (z)
- 10 z H_{2} (z)
+ 10 H_{1} (z)
-10 z .
\label{eq:Phi35}
\end{multline}

Now we have to answer the question, in which of the series
$H_{a_1,\dots,a_r}(z)$ that appear in this expansion of $\Phi^5(z)$
do we find the monomial $z^{297398301914493}$. Using the algorithm
described in Remark~\ref{rem:eff}, we see that 
{\allowdisplaybreaks
\begin{align*}
297398301914493
&=3^{30}+3^{29}+3^{28},\\
&=2\cdot3^{29}+2\cdot3^{29}+3^{28},\\
&=2\cdot3^{28}+3^{ 28}+2\cdot3^{ 28}+ 183014339639688,\\
&=3^{30}+2\cdot3^{ 28}+2\cdot3^{ 28},\\
&=4\cdot3^{29}+3^{ 28},\\
&=3^{30}+4\cdot3^{ 28},\\
&=3^{29}+3^{ 29}+2\cdot3^{ 29}+3^{ 28},\\
&=3^{30}+3^{ 28}+3^{ 28}+2\cdot3{ 28},\\
&=5\cdot3^{28}+183014339639688,\\
&=2\cdot3^{29}+3^{ 28}+137260754729766,\\
&=3^{28}+2\cdot{ 28}+228767924549610,\\
&=2\cdot3^{28}+251644717004571,\\
&=3^{28}+274521509459532.
\end{align*}}%
Here, the first line shows that $z^{297398301914493}$ appears in
$H_{1,1,1}(z)$ (thereby making it impossible to appear
in $H_{1,1,1,1,1}(z)$, $H_{2,1,1,1}(z)$, or $H_{1,2,1,1}(z)$), 
the fifth line shows that it appears in
$H_{4,1}(z)$, and the sixth line shows that it appears in
$H_{1,4}(z)$, while the remaining lines show that it does not
appear in any other term in the expansion of $\Phi^5(z)$ displayed in
\eqref{eq:Phi35}. Hence, by taking into account the coefficients
with which the series $H_{1,1,1}(z)$, $H_{4,1}(z)$, and $H_{1,4}(z)$ appear in
this expansion, the coefficient of $z^{297398301914493}$
in $\Phi^5(z)$ is seen to equal $60-15-15=30$.

\section{The method}
\label{sec:method}

We consider a (formal) differential equation 
\begin{equation} \label{eq:diffeq}
\mathcal P(z;F(z),F'(z),F''(z),\dots,F^{(s)}(z))=0,
\end{equation}
where $\mathcal P$ is a polynomial with integer coefficients, which has a
power series solution $F(z)$ with integer coefficients. 
We assume that this power series solution $F(z)$ is uniquely
determined by
\eqref{eq:diffeq} over the integers, and also
over all rings $(\Z/p^\ga\Z)$, where $p$ is a fixed prime
number and $\ga$ any positive integer.

In the above
situation, we propose the following algorithmic approach to
determining the series $F(z)$ modulo a $p$-power $p^{p^\al}$,
for some non-negative integer $\al$. We make the Ansatz
\begin{equation} \label{eq:Ansatz}
F(z)=\sum _{i=0} ^{p^{\al+1}-1}a_i(z)\Phi^i(z)\quad \text {modulo
}p^{p^\al},
\end{equation}
with $\Phi(z)$ as given in \eqref{eq:Phidef}, and where the $a_i(z)$'s
are (at this point) undetermined Laurent polynomials in $z$.
Now we substitute \eqref{eq:Ansatz} into \eqref{eq:diffeq}, and
we shall gradually determine approximations $a_{i,\be}(z)$ to $a_i(z)$ such that
\eqref{eq:diffeq} holds modulo $p^\be$, for $\be=1,2,\dots,p^{\al}$. 
To start the procedure, we consider the differential equation
\eqref{eq:diffeq} modulo $p$, with
\begin{equation} \label{eq:Ansatz1}
F(z)=F_1(z)=\sum _{i=0} ^{p^{\al+1}-1}a_{i,1}(z)\Phi^i(z)\quad \text {modulo
}p.
\end{equation}
Using the elementary fact that $\Phi'(z)=1$ modulo $p$, we see that
the left-hand side of \eqref{eq:diffeq} is a polynomial in
$\Phi(z)$ with coefficients that are Laurent polynomials in $z$. 
We reduce powers $\Phi^k(z)$ with $k\ge p^{\al+1}$ using
the relation (which is implied by the minimal polynomial for the
modulus~$p$ given in Proposition~\ref{prop:minpol})\,\footnote{Actually, 
if we would like to obtain an optimal result, 
we should use the relation implied by
a minimal polynomial for the modulus $p^{p^\al}$ in the sense of 
Section~\ref{sec:Phi}. But since we have no general formula available
for such a minimal polynomial (cf.\ Item (2) of Remark~\ref{rem:1} in that section),
and since we wish to prove results for arbitrary moduli, choosing instead
powers of a minimal polynomial for the modulus $p$ is the best compromise.
In principle, it may happen that there exists a polynomial in $\Phi(z)$
with coefficients that are Laurent polynomials in $z$, which is identical
with $F(z)$ after reduction of its coefficients modulo $p^{p^\al}$,
but the Ansatz \eqref{eq:Ansatz} combined with the
reduction \eqref{eq:PhiRel} fails because it is too restrictive. 
We are not aware of a concrete example where this obstruction occurs.
The subgroup numbers of $SL_2(\Z)$ which are treated
modulo~$8$ in \cite[Section~11]{KaKMAA} by 
the method described here (specialised to $p=2$), 
and modulo~$16$ by an enhancement of the method
outlined in \cite[Appendix~D]{KaKMAA}
are a potential candidate when
considered modulo $2^{\be}$ for $\be\ge5$.
On the other hand, once we are successful using this
(potentially problematic) Ansatz, then the result can easily be converted into
an optimal one by further reducing the polynomial thus obtained, using
the relation implied by a minimal polynomial for the modulus
$p^{p^\al}$.}
\begin{equation} \label{eq:PhiRel}
\big(\Phi^p(z)-\Phi(z)+z\big)
^{p^\al}=0\quad \text {modulo }p^{p^\al}.
\end{equation}
Since, at this point, we are only interested in finding a solution to
\eqref{eq:diffeq} modulo~$p$, the above relation simplifies to
\begin{equation} \label{eq:PhiRel2}
\Phi^{p^{\al+1}}(z)-\Phi^{p^{\al}}(z)+z^{p^{\al}}=0\quad 
\text{modulo }p.
\end{equation}
Now we compare coefficients of powers $\Phi^k(z)$,
$k=0,1,\dots,p^{\al+1}-1$ (see Remark~\ref{rem:comp}). This yields a
system of $p^{\al+1}$ (differential) equations (modulo~$p$)
for the unknown Laurent polynomials $a_{i,1}(z)$, $i=0,1,\dots,p^{\al+1}-1$,
which may or may not have a solution. 

Provided we have already found
Laurent polynomials $a_{i,\be}(z)$, $i=0,1,\dots,p^{\al+1}-1$, for some $\be$
with $1\le \be\le p^{\al}-1$, such that
\begin{equation} \label{eq:Ansatz2}
\sum _{i=0} ^{p^{\al+1}-1}a_{i,\be}(z)\Phi^i(z)
\end{equation}
solves \eqref{eq:diffeq} modulo~$p^\be$, we put 
\begin{equation} \label{eq:Ansatz2a}
a_{i,\be+1}(z):=a_{i,\be}(z)+p^{\be}b_{i,\be+1}(z),\quad 
i=0,1,\dots,p^{\al+1}-1,
\end{equation}
where the $b_{i,\be+1}(z)$'s are (at this point) undetermined 
Laurent polynomials in $z$. Next we substitute
\begin{equation} \label{eq:Ansatz2b}
\sum _{i=0} ^{p^{\al+1}-1}a_{i,\be+1}(z)\Phi^i(z)
\end{equation}
for $F(z)$ in \eqref{eq:diffeq}. Using the fact that
$\Phi'(z)=\sum _{n=0} ^{\be}p^nz^{p^n-1}$ modulo~$p^{\be+1}$, 
we expand the left-hand side as a polynomial in $\Phi(z)$ (with
coefficients being Laurent polynomials in $z$), we apply again the reduction
using relation \eqref{eq:PhiRel}, we compare coefficients of powers
$\Phi^k(z)$, $k=0,1,\dots,p^{\al+1}-1$ (again, see
Remark~\ref{rem:comp}),
and, as a result, we obtain a
system of $p^{\al+1}$ (differential) equations (modulo~$p^{\be+1}$)
for the unknown Laurent polynomials $b_{i,\be+1}(z)$, $i=0,1,\dots,p^{\al+1}-1$,
which may or may not have a solution. If we manage to push this 
procedure through until $\be=p^{\al}-1$, then,
setting $a_i(z)=a_{i,p^{\al}}(z)$, $i=0,1,\dots,p^{\al+1}-1$,
the right-hand side of \eqref{eq:Ansatz} is a solution to
\eqref{eq:diffeq} modulo~$p^{p^\al}$, as required.

\begin{remarknu} \label{rem:comp}
As the reader will have noticed, each comparison of coefficients of
powers of $\Phi(z)$ is based on the ``hope" that, 
if a polynomial in $\Phi(z)$ is zero modulo a $p$-power $p^\be$
(as a formal Laurent series), then already all coefficients
of powers of $\Phi(z)$ in this polynomial vanish modulo $p^\be$.
However, this implication is false in general 
(see Lemma~39 in \cite[Appendix~D]{KaKMAA} for the case of modulus~$16$).
It may thus happen that the method described in this section fails to find a
solution modulo $p^\be$
to a given differential equation in the form of a polynomial in
$\Phi(z)$ with coefficients that are Laurent polynomials in $z$ over
the integers, while such a solution does in fact
exist. As a matter of fact, this situation already occurred
earlier in \cite[Theorem~28]{KaKMAA} in the context of 
the analysis modulo~$16$ of the
subgroup numbers of $SL_2(\Z)$.
There exists an enhancement of the method, outlined for the case of $p=2$
in \cite[Appendix~D]{KaKMAA},
which (at least in principle) allows us to decide whether
or not a solution modulo a given power in terms of a polynomial in
$\Phi(z)$ with coefficients that are Laurent polynomials in $z$ over
the integers exists, and, if so, to explicitly find such a solution.
Since we do not make use of this enhancement in the present article,
we refrain from presenting it here.
\end{remarknu}

It is not difficult to see that performing the iterative step 
\eqref{eq:Ansatz2a} amounts to solving a system of linear
differential equations in the unknown functions $b_{i,\be+1}(z)$
modulo~$p$, where all of them are Laurent polynomials in $z$,
and where only derivatives up to order $p-1$ of the $b_{i,\be+1}(z)$'s occur.
Solving such a system is equivalent to solving an ordinary system of
linear equations. This equivalence was explained for the case of
$p=2$ in \cite[Lemma~12]{KaKMAA}. Since we do not need the general
version of this lemma for arbitrary primes~$p$ in the present article,
we leave it to the reader to work out the straightforward details.


\medskip
We remark that the idea of the method that we have described in this
section has certainly further potential. For example, the fact that
the series $\Phi(z)$ remains invariant under the substitution $z\to
z^p$ (or, more generally, under the substitution $z\to z^{p^h}$, 
where $h$ is some
positive integer) --- up to a simple additive correction --- can be
exploited in order to extend the range of applicability of our method
to equations where we not only allow differentiation but also this
kind of substitution. This idea is actually already used in
Section~\ref{sec:FCat}, see Theorem~\ref{thm:FCat}.

\section{The number of non-crossing connected graphs modulo $3$-powers}
\label{sec:noncr}

In this section, we apply our method to determining congruences
modulo powers of~$3$ for the numbers of non-crossing connected graphs
with a given number of vertices. The origin of this analysis lies
in a conjecture of Deutsch and Sagan \cite[Conj.~5.18]{DeSaAA}.
See \eqref{eq:DeSaAA} and the subsequent paragraph for more
information.

A non-crossing graph on $n$ vertices is a graph whose vertices are
labelled by $1,2,\dots,n$, which can be embedded in the plane without
crossings of edges in such a manner that the vertices are placed
around a circle in clock-wise order.
Let $N_n$ denote the number of non-crossing connected graphs with $n$
vertices. It is known from \cite[Eq.~(16)]{GessAZ} that
\begin{equation} \label{eq:Nn} 
N_n=\frac {2^{2n-1}} {n}\binom {\frac {3} {2}n-2}{n-1}
-\frac {2^{2n-2}} {n}\binom {\frac {3} {2}n-\frac {3} {2}}{n-1}.
\end{equation}
Let $N(z):=\sum _{n\ge1} ^{}N_n\,z^n$ be the corresponding generating
function. According to
\cite[Eq.~(47)]{DB} (see also \cite[Sec.~2.1]{FN}), the generating function
$N(z)$ satisfies the polynomial equation
\begin{equation} \label{eq:fmdiff}
{N}^3(z) + {N}^2(z) - 3z\,{N}(z) + 2z^2 = 0.
\end{equation}
It is not difficult to see that \eqref{eq:fmdiff} determines $N(z)$
uniquely as a formal power series over $\Z$, and as well 
modulo any power of~$3$, provided $N_0=0$ and $N_1=1$.

\begin{theorem} \label{thm:N}
Let $\Phi(z)=\sum _{n\ge0} ^{}z^{3^n},$ and let $\al$ be a
non-negative integer.
Then the generating function $N(z),$ when reduced modulo $3^{3^\al},$ 
can be expressed as a polynomial in $\Phi(z)$ of degree at most
$3^{\al+1}-1,$ with coefficients that are Laurent polynomials in $z$
over the integers.
\end{theorem}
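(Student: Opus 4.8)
The plan is to follow the method described in Section~\ref{sec:method} essentially verbatim, since the generating function $N(z)$ satisfies the algebraic equation \eqref{eq:fmdiff}, which is a (degenerate, zeroth-order) instance of the differential equation \eqref{eq:diffeq}, and the uniqueness hypothesis has already been verified in the paragraph preceding the theorem. Thus I would not try to prove a closed formula for the polynomial; instead I would argue that the iterative procedure of Section~\ref{sec:method} can be pushed through successfully for this particular equation and modulus $3^{3^\al}$. The claim to be established is purely an \emph{existence} statement (a polynomial in $\Phi(z)$ of degree at most $3^{\al+1}-1$ with Laurent-polynomial coefficients representing $N(z)$ modulo $3^{3^\al}$ exists), so the whole burden is to show that the linear systems arising at each stage $\be=1,2,\dots,3^\al-1$ are solvable.

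First I would make the Ansatz \eqref{eq:Ansatz} with $p=3$, namely $N(z)=\sum_{i=0}^{3^{\al+1}-1}a_i(z)\Phi^i(z)$ modulo $3^{3^\al}$, and substitute it into \eqref{eq:fmdiff}. Because \eqref{eq:fmdiff} is algebraic rather than genuinely differential, no derivatives of $\Phi(z)$ enter, which simplifies the bookkeeping considerably: the left-hand side becomes a polynomial in $\Phi(z)$ whose coefficients are Laurent polynomials in $z$ and in the unknowns $a_i(z)$. I would then reduce all powers $\Phi^k(z)$ with $k\ge 3^{\al+1}$ using relation \eqref{eq:PhiRel}, i.e. $\big(\Phi^3(z)-\Phi(z)+z\big)^{3^\al}=0$ modulo $3^{3^\al}$, so that only powers $\Phi^0,\dots,\Phi^{3^{\al+1}-1}$ survive. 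Comparing coefficients of these powers (invoking the ``hope'' of Remark~\ref{rem:comp}) yields the system of $3^{\al+1}$ equations to be solved stage by stage, exactly as in \eqref{eq:Ansatz2a}--\eqref{eq:Ansatz2b}.

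The base case $\be=1$ amounts to solving \eqref{eq:fmdiff} modulo $3$ in the form $\sum a_{i,1}(z)\Phi^i(z)$. Here I would use the elementary fact, noted in the text, that $\Phi(z)=z$ modulo $3$ fails only through the relation \eqref{eq:PhiRel2}; more usefully, modulo $3$ the equation \eqref{eq:fmdiff} reads $N^3+N^2-3zN+2z^2\equiv N^3+N^2+2z^2\equiv 0$, and one checks directly that a suitable low-degree polynomial in $\Phi(z)$ (reflecting $N(z)\equiv$ an explicit expression modulo $3$) solves it, pinning down the $a_{i,1}(z)$. The inductive step is the linear lifting: writing $a_{i,\be+1}=a_{i,\be}+3^\be b_{i,\be+1}$ and substituting into \eqref{eq:fmdiff}, the terms of order $3^\be$ produce an \emph{inhomogeneous linear} system over $\Z/3\Z$ in the unknowns $b_{i,\be+1}(z)$, because the cubic and quadratic terms contribute, modulo the next power of $3$, only their linearizations $3 a_{i,\be}^2 b$ and $2 a_{i,\be} b$ plus the known residual from stage $\be$.

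The main obstacle is precisely the solvability of these linear systems at each stage, together with the degree bound. Solvability is not automatic (Remark~\ref{rem:comp} stresses that the comparison of coefficients may in principle fail), so the honest way to discharge it is computational: I would report that the systems have been solved explicitly by the algorithm of Section~\ref{sec:method}, carried out for $p=3$, confirming both that a solution exists at every stage up to $\be=3^\al-1$ and that it never requires powers of $\Phi(z)$ beyond $\Phi^{3^{\al+1}-1}$ (the latter being exactly what the reduction \eqref{eq:PhiRel} guarantees). The uniformity in $\al$ is secured by the structure of \eqref{eq:PhiRel}: the same reduction relation governs every $\al$, so once one exhibits the stabilized pattern of coefficients $a_i(z)$ — which is what Theorems~\ref{thm:noncr27} and the explicit low-$\al$ computations supply — the general existence for all $\al$ follows from the self-similarity of $\Phi(z)$ under $z\mapsto z^{3}$ alluded to at the end of Section~\ref{sec:method}. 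I would therefore expect the proof to consist of setting up the Ansatz, recording the base case modulo $3$, describing the linear lifting step, and then appealing to the successful execution of the algorithm (documented in the companion explicit results) to certify that every stage is solvable within the asserted degree bound.
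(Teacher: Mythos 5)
Your setup (the Ansatz, the reduction via \eqref{eq:PhiRel}, the linear lifting step) matches the paper, but there is a genuine gap at exactly the point you identify as ``the main obstacle'': the solvability of the linear system at each stage $\be=1,\dots,3^\al-1$. You propose to discharge this computationally, by appealing to the successful execution of the algorithm. That cannot prove the theorem: the statement quantifies over \emph{all} non-negative integers $\al$, and for each $\al$ there are $3^\al-1$ lifting stages, so no finite computation (such as the modulus-$27$ result of Theorem~\ref{thm:noncr27}) certifies more than finitely many instances. Your fallback --- ``uniformity in $\al$'' via the self-similarity of $\Phi(z)$ under $z\mapsto z^3$ --- is not an argument; that remark in Section~\ref{sec:method} concerns extending the method to equations involving the substitution $z\mapsto z^{p^h}$ (as used for the Fu\ss--Catalan numbers), not the solvability of the lifting systems here.

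The paper closes this gap with two ingredients you are missing. First, it exhibits the base solution \emph{in closed form, uniformly in $\al$}: $a_{0,1}(z)=s_\al^2(z)+s_\al(z)$, $a_{3^\al,1}(z)=1-s_\al(z)$, $a_{2\cdot3^\al,1}(z)=1$, where $s_\al(z)=\sum_{k=0}^{\al-1}z^{3^k}$, verified using $s_\al^3(z)=s_\al(z)+z^{3^\al}-z$ modulo $3$. Second --- and this is the key structural observation --- since $a_{i,\be}(z)\equiv a_{i,1}(z)$ modulo $3$ throughout the iteration, the coefficient matrix $M$ of the linear system for the $b_{i,\be+1}(z)$ is the \emph{same} at every stage $\be$, built from $3\times3$ blocks of diagonal matrices determined by the base solution. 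The paper then computes $\det(M)=z^{2\cdot3^\al}$ modulo $3$ (using the relation for $s_\al^3$ again), which is a unit in the Laurent polynomial ring over $\Z/3\Z$; hence every lifting system is uniquely solvable, for every $\al$ and every $\be$. This determinant computation is the mathematical content of the theorem, and it is what replaces your appeal to computation. (A minor additional imprecision: in the lifting step the linearization of the cubic term carries a factor $3\cdot3^\be$ and so vanishes modulo $3^{\be+1}$; only the quadratic term contributes the linear part in the $b_{i,\be+1}$, which is why $M$ has the simple block form it does.)
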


\begin{remark}
Computer experiments seem to indicate that the generating function
$N(z)$ in Theorem~\ref{thm:N}, when reduced modulo $3^{3^\al},$ 
can in fact be expressed as a polynomial in $\Phi(z)$ of degree at most
$3^{\al+1}-1,$ with coefficients that are {\it polynomials} in $z$
over the integers.
\end{remark}

\begin{proof}[Proof of Theorem~{\em\ref{thm:N}}]
We apply the method from Section~\ref{sec:method}. We start by
substituting the Ansatz \eqref{eq:Ansatz1} in \eqref{eq:fmdiff} and
reducing the result modulo $3$. 
In this way, we obtain
\begin{multline} \label{eq:noncr-1}
\sum _{i=0} ^{3^{\al+1}-1}a_{i,1}^3(z)\Phi^{3i}(z)+
\sum _{i=0} ^{3^{\al+1}-1}a_{i,1}^2(z)\Phi^{2i}(z)\\
-\sum _{0\le i<j\le 3^{\al+1}-1}a_{i,1}(z)a_{j,1}(z)\Phi^{i+j}(z)
+2z^2=0\quad \text
{modulo }3.
\end{multline}
We claim that the following choices solve the above congruence:
\begin{align} \notag
a_{0,1}(z)&=s^2_\al(z)+s_\al(z)
\quad \text {modulo }3,\\[2mm]
\notag
a_{3^{\al},1}(z)&=1-s_\al(z)
\quad \text {modulo }3,\\
a_{2\cdot 3^{\al},1}(z)&=1\quad \text {modulo }3,
\label{eq:baseN}
\end{align}
where $s_\al(z)=\sum _{k=0} ^{\al-1}z^{3^k}$,
with all other $a_{i,1}(z)$ vanishing.
In order to verify the claim, we substitute our choices in
\eqref{eq:noncr-1}. For the left-hand side, we obtain
\begin{multline*} 
\Phi^{2\cdot 3^{\al+1}}(z)
+\left(1-s^3_\al(z)\right)\Phi^{3^{\al+1}}(z)
+s^6_\al(z)+s^3_\al(z)\\
+\Phi^{4\cdot 3^{\al}}(z)
+\left(1+s^2_\al(z)+s_\al(z)\right)\Phi^{2\cdot 3^{\al}}(z)
+s^4_\al(z)-s^3_\al(z)+s^2_\al(z)
\\
-\left(1-s_\al(z)\right)\Phi^{3\cdot 3^{\al}}(z)
-\left(s^2_\al(z)+s_\al(z)\right)\Phi^{2\cdot 3^{\al}}(z)
-\left(s_\al(z)-s^3_\al(z)\right)\Phi^{3^{\al}}(z)
+2z^2\\
\quad \quad \text{modulo 3}.
\end{multline*}
Using the relation \eqref{eq:PhiRel2} with $p=3$
and reducing the obtained expression modulo~3,
we arrive at
\begin{multline*} 
\Phi^{2\cdot 3^{\al}}(z)-2z^{3^\al}\Phi^{3^{\al}}(z)+z^{2\cdot 3^\al}
+\left(s_\al(z)-s^3_\al(z)\right)
\left(\Phi^{3^{\al}}(z)-z^{3^\al}\right)
\\
+\Phi^{3^{\al}}(z)\left(\Phi^{3^{\al}}(z)-z^{3^\al}\right)
+\Phi^{2\cdot 3^{\al}}(z)
+s^6_\al(z)+s^4_\al(z)+s^2_\al(z)
\\
-\left(s_\al(z)-s^3_\al(z)\right)\Phi^{3^{\al}}(z)
+2z^2\\
=z^{2\cdot 3^\al}-z^{3^\al}s_\al(z)+z^{3^\al}s^3_\al(z)
+s^6_\al(z)+s^4_\al(z)+s^2_\al(z)-z^2
\quad \quad \text{modulo 3}.
\end{multline*}
By using the relation
\begin{equation} \label{eq:sal} 
s^3_\al(z)=s_\al(z)+z^{3^{\al}}-z
\quad \quad \text{modulo 3}
\end{equation}
several times, this expression can be turned into
\begin{multline*}
z^{2\cdot 3^\al}-z^{3^\al}s_\al(z)
+z^{3^\al}\left(s_\al(z)+z^{3^\al}-z\right)
+\left(s_\al(z)+z^{3^{\al}}-z\right)^2\\
+s_\al(z)\left(s_\al(z)+z^{3^{\al}}-z\right)+s^2_\al(z)-z^2
\quad \quad \text{modulo 3}.
\end{multline*}
After expansion and reduction modulo~$3$, one sees that this
expression reduces to zero.

\medskip
After we have completed the ``base  step," we now proceed with the
iterative steps described in Section~\ref{sec:method}. We consider
the Ansatz \eqref{eq:Ansatz2}--\eqref{eq:Ansatz2b}, where the
coefficients $a_{i,\be}(z)$ are supposed to provide a solution
$F_{\be}(z)=\sum _{i=0} ^{3^{\al+1}-1}a_{i,\be}(z)\Phi^i(z)$ to
\eqref{eq:fmdiff} modulo~$3^\be$. This Ansatz, substituted in
\eqref{eq:fmdiff}, produces the congruence
\begin{multline} \label{eq:ItGl}
-3^\be\sum _{j=0} ^{3^{\al+1}-1}\sum _{i=0} ^{3^{\al+1}-1}
a_{j,\be}(z)b_{i,\be+1}(z)\Phi^{i+j}(z)
+F^3_{\be}(z)
+F^2_{\be}(z)-3zF_{\be}(z)+2z^2=0\\
\quad \quad 
\text {modulo }3^{\be+1}.
\end{multline}
Since the sum has the prefactor $3^\be$, we may reduce the
$a_{i,\be}(z)$ modulo $3$. By construction, we have
\begin{align*}
a_{0,\be}(z)&=a_{0,1}(z)=s^2(\al)+s_\al(z)
\quad \quad \text{modulo }3,\\
a_{3^\al,\be}(z)&=a_{3^\al,1}(z)=1-s_\al(z)
\quad \quad \text{modulo }3,\\
a_{2\cdot3^\al,\be}(z)&=a_{2\cdot3^\al,1}(z)=1
\quad \quad \text{modulo }3,
\end{align*}
and $a_{i,\be}=0$ modulo 3 for all other $i$'s.
If we substitute this in \eqref{eq:ItGl} and subsequently use
\eqref{eq:PhiRel2} with $p=3$ to reduce high powers of $\Phi(z)$, we obtain
\begin{multline*} 
-3^\be
\left(s^2_\al(z)+s_\al(z)\right)
\sum _{i=0} ^{3^{\al+1}-1}
b_{i,\be+1}(z)\Phi^{i}(z)
-3^\be \left(1-s_\al(z)\right)
\sum _{i=0} ^{3^{\al+1}-1}
b_{i,\be+1}(z)\Phi^{3^\al+i}(z)\\
-3^\be\sum _{i=0} ^{3^{\al+1}-1}
b_{i,\be+1}(z)\Phi^{2\cdot3^\al+i}(z)
+F^3_{\be}(z)
+F^2_{\be}(z)-3zF_{\be}(z)+2z^2\\
=
-3^\be
\sum _{i=0} ^{3^{\al}-1}
\left(
\left(s^2_\al(z)+s_\al(z)\right)
b_{i,\be+1}(z)\right.
\kern8cm
\\
\kern3cm
\left.
-
\left(1-s_\al(z)\right)z^{3^\al}b_{i+2\cdot3^\al,\be+1}(z)
-z^{3^\al}b_{i+3^\al,\be+1}(z)
\right)
\Phi^{i}(z)\\
-3^\be
\sum _{i=3^\al} ^{2^\cdot3^{\al}-1}
\left(
\left(s^2_\al(z)+s_\al(z)\right)
b_{i,\be+1}(z)
+\left(1-s_\al(z)\right)b_{i-3^\al,\be+1}(z)\right.
\kern3cm
\\
\kern3cm
\left.
+\left(1-s_\al(z)\right)b_{i+3^\al,\be+1}(z)
+b_{i,\be+1}(z)
-z^{3^\al}b_{i+3^\al,\be+1}(z)
\right)
\Phi^{i}(z)\\
-3^\be
\sum _{i=2\cdot3^\al} ^{3^{\al+1}-1}
\left(
\left(s^2_\al(z)+s_\al(z)\right)
b_{i,\be+1}(z)
+\left(1-s_\al(z)\right)b_{i-3^\al,\be+1}(z)\right.
\kern3cm
\\
\kern3cm
\left.
+b_{i-2\cdot3^\al,\be+1}(z)
+b_{i,\be+1}(z)
\right)
\Phi^{i}(z)\\
+F^3_{\be}(z)
+F^2_{\be}(z)-3zF_{\be}(z)+2z^2=0
\quad \quad 
\text {modulo }3^{\be+1}.
\end{multline*}
By our assumption on $F_{\be}(z)$, we may divide by $3^\be$.
Comparison of powers of $\Phi(z)$ then yields a system of congruences
of the form
\begin{equation} \label{eq:Abc} 
M\cdot b=c\quad \text{modulo }3,
\end{equation}
where $b$ is the column vector of unknowns
$(b_{i,\be+1}(z))_{i=0,1,\dots,3^{\al+1}-1}$, $c$ is a (known)
column vector of Laurent polynomials in $z$, and $M$ is the matrix
$$
\begin{pmatrix} 
D(s^2_\al(z)+s_\al(z))&D(-z^{3^\al})&D(-z^{3^\al}(1-s_\al(z)))\\
D(1-s_\al(z))&D(s^2_\al(z)+s_\al(z)+1)&D(1-s_\al(z)-z^{3^\al})\\
D(1)&D(1-s_\al(z))&D(s^2_\al(z)+s_\al(z)+1)
\end{pmatrix},
$$
with $D(x)$ denoting the $3^\al\times3^\al$ diagonal matrix whose
diagonal entries equal $x$. We claim that
$$
\det (M)=z^{2\cdot 3^\al}\quad \quad \text{modulo }3.
$$
In order to see this, we subtract $s^2_\al(z)+s_\al(z)$ times row
$i+2\cdot3^\al$ from row $i$, $i=0,1,\dots,3^{\al}-1$,
and we subtract $1-s_\al(z)$ times row
$i+2\cdot3^\al$ from row $i+3^\al$, $i=0,1,\dots,3^{\al}-1$.
If, in addition, we move rows $2\cdot 3^\al,2\cdot 3^\al+1,
\dots,3^{\al+1}-1$ to the top of the matrix, then
it attains an upper triangular form, from which
one infers that
\begin{align*}
\det(M)&=(-1)^{2\cdot3^\al\cdot3^\al}
\det\!{}^2 \big(D(s^3_\al(z)-s_\al(z)-z^{3^\al})\big)
\det\big(D(1)\big)\\
&=\big(s^3_\al(z)-s_\al(z)-z^{3^\al}\big)^{2\cdot 3^\al}\\
&=z^{2\cdot 3^\al}\quad \quad \text{modulo }3,
\end{align*}
by means of \eqref{eq:sal}, as we claimed. As a consequence,
the system \eqref{eq:Abc} is
(uniquely) solvable. Thus, we have proved that, for an arbitrary non-negative
integer $\al$, the algorithm of
Section~\ref{sec:method} will produce a solution $F_{{3^{\al}}}(z)$ 
to \eqref{eq:fmdiff} modulo $3^{3^\al}$ which is a
polynomial in $\Phi(z)$ with coefficients that are Laurent polynomials in
$z$.
\end{proof}

We have implemented this algorithm. As an illustration, the next
theorem contains the result for the modulus $27$.

{
\allowdisplaybreaks
\begin{theorem} \label{thm:noncr27}
Let $\Phi(z)=\sum _{n\ge0} ^{}z^{3^n}$.
Then we have
\begin{multline} 
\label{eq:Loes1}
\sum _{n\ge1} ^{}N_n\,z^n
= 18 z^3 + z^2 + z
+ (18 z^3+12 z^2 ) \Phi(z) 
+ ( 3 z^2+15 z ) \Phi^2(   z)\\
+ ( 9 z^2 + 5 z+13 ) \Phi^3(z) 
+ ( 9 z^2+ 6 z +24) \Phi^4(  z) 
+ (15 z + 6 ) \Phi^5(z) \\
+ (18 z + 4 ) \Phi^6(z) 
+ (18 z + 21 ) \Phi^7(z) + 
 12 \Phi^8(z)
\quad \quad 
\text {\em modulo }27.
\end{multline}
\end{theorem}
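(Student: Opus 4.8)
The plan is to read \eqref{eq:Loes1} as the explicit output of the algorithm of Section~\ref{sec:method} in the instance $\al=1$ of Theorem~\ref{thm:N}, where the modulus is $3^{3^1}=27$ and the degree bound in $\Phi(z)$ is $3^{\al+1}-1=8$. The proof of Theorem~\ref{thm:N} already guarantees that the base step and the two iteration steps (passing successively from modulus $3$ to $9$ to $27$) can all be carried out, since at each stage the relevant linear system \eqref{eq:Abc} has coefficient matrix of determinant $z^{2\cdot 3^\al}=z^6\not\equiv0$ modulo~$3$, hence is uniquely solvable. Thus one could simply run the procedure and report the result. For the write-up, however, it is cleaner to verify the displayed identity directly, exploiting the uniqueness of $N(z)$ modulo~$27$.

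Let $P(z)$ denote the right-hand side of \eqref{eq:Loes1}. Since \eqref{eq:fmdiff} determines $N(z)$ uniquely modulo any power of~$3$ once $N_0=0$ and $N_1=1$ are prescribed (the coefficient of $z^{n+1}$ in \eqref{eq:fmdiff} forces $N_n$ with the unit coefficient $2N_1-3=-1$), it suffices to check two things: first, that $P(z)$, expanded as a power series, satisfies $N_0=0$ and $N_1=1$; and second, that $P(z)$ solves \eqref{eq:fmdiff} modulo~$27$. The normalization is immediate from $\Phi(z)=z+z^3+z^9+\cdots$, so that every power $\Phi^i(z)$ has lowest term $z^i$: the constant term of $P(z)$ is $a_0(0)=0$, while the only contribution to the coefficient of $z^1$ comes from the linear term of $a_0(z)=18z^3+z^2+z$, giving $N_1=1$. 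As a result, $P(z)$ and $N(z)$ obey the same recurrence with the same two starting values modulo~$27$, whence $P(z)\equiv N(z)$ modulo~$27$ as soon as the second check is in place.

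The substance is therefore the algebraic check. I would substitute $P(z)$ into the left-hand side of \eqref{eq:fmdiff} and expand $P^3(z)+P^2(z)-3zP(z)+2z^2$ as a polynomial in $\Phi(z)$ of $\Phi$-degree at most $24$, with coefficients that are polynomials in~$z$. The powers $\Phi^k(z)$ with $k\ge9$ are then reduced to the range $0\le k\le8$ by means of the relation \eqref{eq:PhiRel} with $\al=1$, namely $\bigl(\Phi^3(z)-\Phi(z)+z\bigr)^3=0$ modulo~$27$, which is exactly the minimal polynomial for the modulus $27=3^3$ recorded in Proposition~\ref{prop:minpol}. Collecting terms leaves an expression $\sum_{k=0}^{8}c_k(z)\Phi^k(z)$, and it remains to verify that each $c_k(z)$ vanishes modulo~$27$. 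Because each reduction step replaces the expression by a congruent one modulo~$27$, this coefficient-wise vanishing implies $P^3+P^2-3zP+2z^2\equiv0$ modulo~$27$, as required; note that for the verification direction one needs only this easy implication and \emph{not} the linear independence that underlies the caveat in Remark~\ref{rem:comp}.

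The main obstacle is purely computational: the bookkeeping for a degree-$24$ polynomial in $\Phi(z)$ together with the repeated application of the reduction \eqref{eq:PhiRel} is heavy and is best delegated to a computer, which is precisely how \eqref{eq:Loes1} was produced. A consistency check available by hand is to reduce \eqref{eq:Loes1} modulo~$3$ and confirm that it reproduces the base-step solution \eqref{eq:baseN} for $\al=1$ (where $s_1(z)=z$): the coefficients collapse to $a_0\equiv z^2+z$, $a_3\equiv 1-z$, $a_6\equiv 1$, and all others to~$0$, in agreement with $s_1^2+s_1$, $1-s_1$, and~$1$. This lends confidence that the displayed polynomial is the correct mod-$27$ representative, the remaining $9$-adic and $27$-adic corrections being exactly the $b_{i,2}(z)$ and $b_{i,3}(z)$ furnished by the two uniquely solvable iteration steps.
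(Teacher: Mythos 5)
Your proposal is correct, and it differs in form (though not in substance) from what the paper does. The paper gives no separate proof of Theorem~\ref{thm:noncr27} at all: the display \eqref{eq:Loes1} is presented as the output of the implemented algorithm whose success for every $\al$ is guaranteed by the determinant computation in the proof of Theorem~\ref{thm:N}, combined with the uniqueness of the solution of \eqref{eq:fmdiff} modulo $27$. You invoke the same guarantee, but then restructure the argument as an \emph{a posteriori} verification: substitute the candidate $P(z)$ into \eqref{eq:fmdiff}, reduce all powers $\Phi^k(z)$ with $k\ge 9$ via $\bigl(\Phi^3(z)-\Phi(z)+z\bigr)^3\equiv 0$ modulo $27$ (which is \eqref{eq:PhiRel} with $\al=1$, consistent with Proposition~\ref{prop:minpol}), check that the nine coefficient Laurent polynomials vanish modulo $27$, and conclude by uniqueness after confirming $N_0=0$, $N_1=1$ --- your computation of the unit coefficient $2N_1-3=-1$ correctly fills in the uniqueness claim that the paper only asserts. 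This is precisely the verification pattern the paper itself uses for Theorems~\ref{thm:FCat27} and~\ref{thm:S27} (``once found, it is easily verified directly by substitution''), so it is squarely within the paper's toolkit, just applied to a theorem where the authors instead lean on the derivation. Your route buys an independently checkable certificate that does not rely on the correctness of the implementation, and you rightly observe that only the trivial direction of coefficient comparison is needed, so the subtlety of Remark~\ref{rem:comp} is irrelevant here; what it does not remove is the machine computation itself, which is unavoidable in either formulation. Your mod-$3$ consistency check against \eqref{eq:baseN} (with $s_1(z)=z$, giving $a_0\equiv z^2+z$, $a_3\equiv 1-z\equiv 1+2z$, $a_6\equiv 1$) is also accurate.
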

}

In \cite[Conj.~5.18]{DeSaAA}, Deutsch and Sagan conjectured that
\begin{equation} \label{eq:DeSaAA} 
N_n\equiv\begin{cases} 
1\pmod 3&\text{if }n=3^i\text{ or }n=2\cdot 3^i\text{ for an integer 
$i\ge0$},\\
2\pmod 3&\text{if }n=3^{i_1}+3^{i_2}\text{ for integers $i_1>i_2\ge0$},\\
0\pmod 3&\text{otherwise.}
\end{cases}
\end{equation}
This conjecture was proved by Eu, Liu and Yeh \cite{EuLYAA} by
carefully analysing a binomial sum formula for $N_n$ modulo~$3$.
A simpler proof was given by Gessel in \cite{GessAZ}.
It should be noted that his proof is essentially the one resulting
from the $\al=0$ case of Theorem~\ref{thm:N}. Moreover,
our results allow
us to generalise this --- in principle --- to any power of~$3$.
For the sake of illustration, we display the results for the
moduli $9=3^2$ and $27=3^3$.

\begin{corollary} \label{cor:N27}
The numbers $N_n$ of connected non-crossing  graphs
obey the following congruences modulo $27${\em:}

\begin{enumerate}
\item[(i)] 
$N_n\equiv1$~{\em (mod~$27$)} if, and only if, 
$n=1$,
or
$n=2\cdot 3^i$ with $i\ge0;$

\item[(ii)] 
$N_n\equiv2$~{\em (mod~$27$)} if, and only if, 
$n=10\cdot 3^i$ with $i\ge1;$

\item[(iii)] 
$N_n\equiv3$~{\em (mod~$27$)} if, and only if, 
$n=2\cdot 3^i+1$ with $i\ge1$,
or
$n=3^i+2$ with $i\ge1$,
or
$n=2\cdot3^{i_1}+2\cdot3^{i_2}$ with $i_1>i_2\ge0$,
or
$n=4\cdot3^{i_1}+3^{i_2}+3^{i_3}$ with $i_1-2>i_2-1>i_3\ge0$,
or
$n=3^{i_1}+4\cdot3^{i_2}+3^{i_3}$ with $i_1-3>i_2-1>i_3\ge0$,
or
$n=3^{i_1}+3^{i_2}+4\cdot3^{i_3}$ with $i_1-3>i_2-2>i_3\ge0;$

\item[(iv)] 
$N_n\equiv4$~{\em (mod~$27$)} if, and only if, 
$n=3;$

\item[(v)] 
$N_n\equiv5$~{\em (mod~$27$)} if, and only if, 
$n=4\cdot3^i$ with $i\ge1;$

\item[(vi)] 
$N_n\equiv6$~{\em (mod~$27$)} if, and only if, 
$n=22\cdot3^i$ with $i\ge0$,
or
$n=3^{i_1}+3^{i_2}+1$ with $i_1-1>i_2\ge2$,
or
$n=4\cdot3^{i_1}+3^{i_2}$ with $i_1-1>i_2\ge1$,
or
$n=3^{i_1}+4\cdot3^{i_2}$ with $i_1-2>i_2\ge1$,
or
$n=2\cdot3^{i_1}+3^{i_2}+3^{i_3}$ with $i_1-2>i_2-1>i_3\ge0$,
or
$n=3^{i_1}+2\cdot3^{i_2}+3^{i_3}$ with $i_1-1>i_2-1>i_3\ge0$,
or
$n=3^{i_1}+3^{i_2}+2\cdot3^{i_3}$ with $i_1-1>i_2>i_3\ge0;$

\item[(vii)] 
$N_n\equiv9$~{\em (mod~$27$)} if, and only if, 
$n=2\cdot3^{i_1}+2\cdot3^{i_2}+3^{i_3}$ with $i_1>i_2>i_3\ge0$,
or
$n=2\cdot3^{i_1}+3^{i_2}+2\cdot3^{i_3}$ with $i_1>i_2>i_3\ge0$,
or
$n=3^{i_1}+2\cdot3^{i_2}+2\cdot3^{i_3}$ with $i_1>i_2>i_3\ge0$,
or
$n=2\cdot3^{i_1}+2\cdot3^{i_2}+2\cdot3^{i_3}$ with $i_1>i_2>i_3\ge0$,
or
$n=3^{i_1}+3^{i_2}+3^{i_3}+3^{i_4}+3^{i_5}$ 
with $i_1>i_2>i_3>i_4>i_5\ge0$,
or
$n=2\cdot3^{i_1}+3^{i_2}+3^{i_3}+3^{i_4}+3^{i_5}$ 
with $i_1>i_2>i_3>i_4>i_5\ge0$,
or
$n=3^{i_1}+2\cdot3^{i_2}+3^{i_3}+3^{i_4}+3^{i_5}$ 
with $i_1>i_2>i_3>i_4>i_5\ge0$,
or
$n=3^{i_1}+3^{i_2}+2\cdot3^{i_3}+3^{i_4}+3^{i_5}$ 
with $i_1>i_2>i_3>i_4>i_5\ge0$,
or
$n=3^{i_1}+3^{i_2}+3^{i_3}+2\cdot3^{i_4}+3^{i_5}$ 
with $i_1>i_2>i_3>i_4>i_5\ge0$,
or
$n=3^{i_1}+3^{i_2}+3^{i_3}+3^{i_4}+2\cdot3^{i_5}$ 
with $i_1>i_2>i_3>i_4>i_5\ge0;$

\item[(viii)] 
$N_n\equiv11$~{\em (mod~$27$)} if, and only if, 
$n=3^i+1$ with $i\ge3;$

\item[(ix)] 
$N_n\equiv12$~{\em (mod~$27$)} if, and only if, 
$n=7;$
or
$n=40\cdot3^i$ with $i\ge0$,
or
$n=2\cdot3^{i_1}+3^{i_2}+1$ with $i_1-1>i_2\ge1$,
or
$n=3^{i_1}+2\cdot3^{i_2}+1$ with $i_1-1>i_2\ge1$,
or
$n=3^{i_1}+3^{i_2}+3^{i_3}+3^{i_4}$ with $i_1-3>i_2-2>i_3-1>i_4\ge0;$

\item[(x)] 
$N_n\equiv13$~{\em (mod~$27$)} if, and only if, 
$n=3^i$ with $i\ge2;$

\item[(xi)] 
$N_n\equiv15$~{\em (mod~$27$)} if, and only if, 
$n=3^{i}+4$ with $i\ge3$,
or
$n=4\cdot3^{i}+1$ with $i\ge2$,
or
$n=13\cdot3^i$ with $i\ge1$,
or
$n=4\cdot3^{i_1}+2\cdot3^{i_2}$ with $i_1-1>i_2\ge0$,
or
$n=2\cdot3^{i_1}+4\cdot3^{i_2}$ with $i_1-2>i_2\ge0;$

\item[(xii)] 
$N_n\equiv18$~{\em (mod~$27$)} if, and only if, 
$n=2\cdot3^{i_1}+3^{i_2}+3^{i_3}+3^{i_4}$ 
with $i_1>i_2>i_3>i_4\ge0$,
or
$n=3^{i_1}+2\cdot3^{i_2}+3^{i_3}+3^{i_4}$ 
with $i_1>i_2>i_3>i_4\ge0$,
or
$n=3^{i_1}+3^{i_2}+2\cdot3^{i_3}+3^{i_4}$ 
with $i_1>i_2>i_3>i_4\ge0$,
or
$n=3^{i_1}+3^{i_2}+3^{i_3}+2\cdot3^{i_4}$ 
with $i_1>i_2>i_3>i_4\ge0$,
or
$n=2\cdot3^{i_1}+2\cdot3^{i_2}+3^{i_3}+3^{i_4}$ 
with $i_1>i_2>i_3>i_4\ge0$,
or
$n=2\cdot3^{i_1}+3^{i_2}+2\cdot3^{i_3}+3^{i_4}$ 
with $i_1>i_2>i_3>i_4\ge0$,
or
$n=2\cdot3^{i_1}+3^{i_2}+3^{i_3}+2\cdot3^{i_4}$ 
with $i_1>i_2>i_3>i_4\ge0$,
or
$n=3^{i_1}+2\cdot3^{i_2}+2\cdot3^{i_3}+3^{i_4}$ 
with $i_1>i_2>i_3>i_4\ge0$,
or
$n=3^{i_1}+2\cdot3^{i_2}+3^{i_3}+2\cdot3^{i_4}$ 
with $i_1>i_2>i_3>i_4\ge0$,
or
$n=3^{i_1}+3^{i_2}+2\cdot3^{i_3}+2\cdot3^{i_4}$ 
with $i_1>i_2>i_3>i_4\ge0$,
or
$n=3^{i_1}+3^{i_2}+3^{i_3}+3^{i_4}+3^{i_5}+3^{i_6}$ 
with $i_1>i_2>i_3>i_4>i_5>i_6\ge0;$

\item[(xiii)] 
$N_n\equiv20$~{\em (mod~$27$)} if, and only if, 
$n=10$,
or
$n=3^{i_1}+3^{i_2}$ with $i_1-2>i_2\ge1;$

\item[(xiv)] 
$N_n\equiv21$~{\em (mod~$27$)} if, and only if, 
$n=7\cdot 3^i$ with $i\ge1$,
or
$n=4\cdot3^{i_1}+4\cdot3^{i_2}$ with $i_1-2>i_2\ge0$,
or
$n=13\cdot3^{i_1}+3^{i_2}$ with $i_1-1>i_2\ge0$,
or
$n=3^{i_1}+13\cdot3^{i_2}$ with $i_1-3>i_2\ge0;$

\item[(xv)] 
$N_n\equiv23$~{\em (mod~$27$)} if, and only if, 
$n=4;$

\item[(xvi)] 
$N_n\equiv24$~{\em (mod~$27$)} if, and only if, 
$n=13$,
or
$n=16\cdot 3^i$ with $i\ge0$,
or
$n=7\cdot3^{i_1}+3^{i_2}$ with $i_1-1>i_2\ge0;$

\item[(xvii)] in the cases not covered by items {\em(i)}--{\em(xvi),}
$N_n$ is divisible by $27;$
in particular, $N_n \not\equiv 7,8,10,14,16,17,19,22,25,26$~{\em(mod~$27$)} 
for all $n$.
\end{enumerate}
\end{corollary}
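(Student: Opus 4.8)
The plan is to read off the classification directly from the explicit expression for $N(z)$ modulo $27$ furnished by Theorem~\ref{thm:noncr27}, combined with the coefficient-extraction machinery of Section~\ref{sec:extr}. The starting point is that, modulo $27$, the series $N(z)$ equals an explicit polynomial of degree $8$ in $\Phi(z)$ whose coefficients are polynomials in $z$. Using the brute-force expansion \eqref{eq:Phipot} together with the reduction algorithm of Proposition~\ref{conj:H}, I would rewrite each of the powers $\Phi^k(z)$, $k=1,2,\dots,8$, as a $\Z$-linear combination of the series $H_{a_1,\dots,a_r}(z)$ with all $a_i$ relatively prime to~$3$ (and of the constant series~$1$); the case $k=5$ is already recorded in \eqref{eq:Phi35}. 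Multiplying by the corresponding coefficient polynomials in $z$ merely shifts exponents, so after collecting terms $N(z)$ becomes, modulo $27$, a finite $\Z[z]$-linear combination $\sum_{i} q_i(z)\,H_{a_1^{(i)},\dots,a_{r_i}^{(i)}}(z)$ of such series plus a polynomial in~$z$.

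Next, by the linear independence over $(\Z/27\Z)[z,z^{-1}]$ guaranteed by Corollary~\ref{lem:Hind}, the coefficient of $z^n$ in $N(z)$ modulo $27$ can be read off unambiguously: it is the sum, over all pairs consisting of a monomial $z^j$ occurring in some $q_i(z)$ and an index tuple $(a_1^{(i)},\dots,a_{r_i}^{(i)})$, of the product of the two coefficients, restricted to those pairs for which $z^{\,n-j}$ actually occurs in $H_{a_1^{(i)},\dots,a_{r_i}^{(i)}}(z)$. Whether $z^{\,n-j}$ occurs is decided, for each fixed tuple, by the effective criterion of Remark~\ref{rem:eff}: one must be able to write $n-j=a_1 3^{n_1}+\dots+a_r 3^{n_r}$ with $n_1>\dots>n_r\ge0$, and such a representation is unique whenever it exists. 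Thus the total coefficient of $z^n$ is a finite sum governed entirely by the way $n$ and its small shifts $n-j$ decompose as sums of powers of~$3$ with multiplicities relatively prime to~$3$, that is, by the base-$3$ shape of~$n$.

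The core of the argument is then purely combinatorial: for each target residue $c$ modulo $27$ one determines precisely which integers $n$ yield total coefficient $\equiv c$. Generically this depends only on the base-$3$ expansion of $n$ --- on the positions and values of its nonzero digits and on the spacing between them --- which is why the answers in items (i)--(xvi) are stated as families parametrised by the exponent positions $i_1>i_2>\cdots$. (For instance, the recurring multipliers $7,10,13,16,22,40$ simply encode fixed short digit-patterns in base~$3$, such as $13\cdot 3^i=3^{i+2}+3^{i+1}+3^i$.) The several additive corrections coming from the low-degree coefficients in Theorem~\ref{thm:noncr27} (the shifts by $z^j$ with small $j$, such as the terms $18z^3$, $z^2$, $z$, or $15z$) perturb this generic behaviour only for small $n$, or when distinct representations of $n$ collide at low powers of~$3$; these collisions account for the finitely many sporadic values ($n=1,3,4,7,10,13$ and the like) appearing explicitly in the statement. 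Item (xvii) then follows by exclusion, once one verifies that no combination of representations can total a residue in $\{7,8,10,14,16,17,19,22,25,26\}$.

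The main obstacle is the heavy bookkeeping rather than any conceptual difficulty: one must expand all eight powers of $\Phi(z)$, track the exponent shifts induced by every coefficient monomial, and then, for each residue class modulo $27$, carry out a careful case analysis over the possible base-$3$ shapes of~$n$ --- distinguishing how many powers of~$3$ are ``hit'', which carry multiplicities $2,4,5,7,8$, and how closely the relevant exponents are spaced (this spacing controls whether the recursive reductions of Proposition~\ref{conj:H} create further overlaps among the $H$-series). The delicate part is guaranteeing completeness and mutual disjointness of the resulting families and correctly isolating the sporadic small-$n$ exceptions; once the expansion is in hand, each individual verification is a finite, mechanical application of Remark~\ref{rem:eff}, so in practice the entire computation is best carried out and checked by computer, exactly as the authors indicate they have implemented the underlying algorithm.
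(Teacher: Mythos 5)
Your proposal is correct and takes essentially the same route as the paper: the authors likewise convert the right-hand side of \eqref{eq:Loes1} into a $\Z[z]$-linear combination of series $H_{a_1,\dots,a_r}(z)$ with all $a_i$ relatively prime to~$3$ by means of Equation~\eqref{eq:Phipot} and Proposition~\ref{conj:H}, and then obtain the congruences by the coefficient-extraction algorithm of Remark~\ref{rem:eff}. The only difference is presentational: the paper displays the resulting $H$-expansion explicitly and leaves the (computer-assisted) extraction implicit, while you spell out the ensuing base-$3$ case analysis and bookkeeping in more detail.
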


\begin{proof}
We convert the right-hand side of \eqref{eq:Loes1} into a linear
combination of series $H_{a_1,a_2,\dots,a_r}(z)$ with all $a_i$'s
relatively prime to~$p$, by means of Equation~\eqref{eq:Phipot} and 
Proposition~\ref{conj:H}. The result is
\begin{multline*} 
\sum _{n\ge1} ^{}N_n\,z^n
= 9 z^2 + 15 z + (18 z + 13) H_{1}(z) 
+ (18 z + 1) H_{2}(z) 
 + (9 z + 20) H_{1, 1}(z)\\
+ 12 H_{2, 1}(z) 
 + 12 H_{1, 2}(z) 
+ 24 H_{1, 1, 1}(z) 
+ 12 H_{4}(z)
+ 3 H_{2, 2}(z) \\
+ 6 H_{2, 1, 1}(z) 
+ 6 H_{1, 2, 1}(z) 
+ 6 H_{1, 1, 2}(z) 
+ 12 H_{1, 1, 1, 1}(z) 
+ 9 H_{4, 1}(z) 
+ 9 H_{1, 4}(z) \\
+ 9 H_{2, 2, 1}(z) 
+ 9 H_{2, 1, 2}(z) 
+ 9 H_{1, 2, 2}(z) 
+ 18 H_{2, 1, 1, 1}(z) 
+ 18 H_{1, 2, 1, 1}(z) \\
+ 18 H_{1, 1, 2, 1}(z) 
+ 18 H_{1, 1, 1, 2}(z) 
+ 9 H_{1, 1, 1, 1, 1}(z) 
+ 9 H_{4, 2}(z) 
+ 9 H_{2, 4}(z) \\
+ 18 H_{4, 1, 1}(z) 
+ 18 H_{1, 4, 1}(z) 
+ 18 H_{1, 1, 4}(z) 
+ 9 H_{2, 2, 2}(z) 
+ 18 H_{2, 2, 1, 1}(z) 
+ 18 H_{2, 1, 2, 1}(z)\\ 
+ 18 H_{2, 1, 1, 2}(z) 
+ 18 H_{1, 2, 2, 1}(z) 
+ 18 H_{1, 2, 1, 2}(z) 
+ 18 H_{1, 1, 2, 2}(z) \\
+ 9 H_{2, 1, 1, 1, 1}(z) 
+ 9 H_{1, 2, 1, 1, 1}(z) 
+ 9 H_{1, 1, 2, 1, 1}(z) \\
+ 9 H_{1, 1, 1, 2, 1}(z) 
+ 9 H_{1, 1, 1, 1, 2}(z) 
+ 18 H_{1, 1, 1, 1, 1, 1}(z)
\quad \quad 
\text {modulo }27.
\end{multline*}
Coefficient extraction following the algorithm described in
Remark~\ref{rem:eff} then yields the claimed congruences.
\end{proof}

Considered modulo~$9$, the above corollary reduces to the following.

\begin{corollary} \label{cor:N9}
The numbers $N_n$ of connected non-crossing  graphs
obey the following congruences modulo $9${\em:}

\begin{enumerate}
\item[(i)] 
$N_n\equiv1$~{\em (mod~$9$)} if, and only if, 
$n=1$,
or
$n=2\cdot3^i$ with $i\ge0$;

\item[(ii)] 
$N_n\equiv2$~{\em (mod~$9$)} if, and only if, 
$n=3^{i_1}+3^{i_2}$ 
with $i_1-1>i_2\ge0;$

\item[(iii)] 
$N_n\equiv3$~{\em (mod~$9$)} if, and only if, 
$n=2\cdot3^{i_1}+3^{i_2}$ 
with $i_1>i_2\ge0$,
or
$n=3^{i_1}+2\cdot3^{i_2}$ 
with $i_1>i_2\ge0$,
or
$n=2\cdot3^{i_1}+2\cdot3^{i_2}$ 
with $i_1>i_2\ge0$,
or
$n=3^{i_1}+3^{i_2}+3^{i_3}+3^{i_4}$ 
with $i_1>i_2>i_3>i_4\ge0;$

\item[(iv)] 
$N_n\equiv4$~{\em (mod~$9$)} if, and only if, 
$n=3^i$ with $i\ge0;$

\item[(v)] 
$N_n\equiv5$~{\em (mod~$9$)} if, and only if, 
$n=4\cdot3^i$ with $i\ge0;$

\item[(vi)] 
$N_n\equiv6$~{\em (mod~$9$)} if, and only if, 
$n=3^{i_1}+3^{i_2}+3^{i_3}$ with $i_1>i_2>i_3\ge0$,
or
$n=2\cdot3^{i_1}+3^{i_2}+3^{i_3}$ with $i_1>i_2>i_3\ge0$,
or
$n=3^{i_1}+2\cdot3^{i_2}+3^{i_3}$ with $i_1>i_2>i_3\ge0$,
or
$n=3^{i_1}+3^{i_2}+2\cdot3^{i_3}$ with $i_1>i_2>i_3\ge0;$

\item[(vii)] in the cases not covered by items {\em(i)}--{\em(vi),}
$N_n$ is divisible by $9;$
in particular, $N_n \not\equiv 7,8$~{\em(mod~$9$)} 
for all $n$.
\end{enumerate}
\end{corollary}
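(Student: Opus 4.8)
The plan is to derive Corollary~\ref{cor:N9} directly from Corollary~\ref{cor:N27} by reduction modulo~$9$, since knowledge of $N_n$ modulo~$27$ certainly determines $N_n$ modulo~$9$. Concretely, I would reduce each residue class listed in Corollary~\ref{cor:N27} modulo~$9$ and then regroup the index families according to the value of their $27$-residue modulo~$9$. The residues modulo~$27$ occurring in Corollary~\ref{cor:N27} are $1,2,3,4,5,6,9,11,12,13,15,18,20,21,23,24$ together with the catch-all class~$0$; reducing them modulo~$9$ one finds that $1$ maps to~$1$, the set $\{2,11,20\}$ maps to~$2$, $\{3,12,21\}$ maps to~$3$, $\{4,13\}$ maps to~$4$, $\{5,23\}$ maps to~$5$, $\{6,15,24\}$ maps to~$6$, and $\{9,18\}$ together with the class~$0$ of Corollary~\ref{cor:N27}(xvii) maps to~$0$. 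For each target residue, the union of the corresponding $n$-families from Corollary~\ref{cor:N27} should give the description in Corollary~\ref{cor:N9}.

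First I would record the elementary base-$3$ rewritings that govern the regrouping, namely $4\cdot 3^i=3^{i+1}+3^i$, $7\cdot 3^i=2\cdot 3^{i+1}+3^i$, $10\cdot 3^i=3^{i+2}+3^i$, $13\cdot 3^i=3^{i+2}+3^{i+1}+3^i$, $16\cdot 3^i=3^{i+2}+2\cdot 3^{i+1}+3^i$, $22\cdot 3^i=2\cdot 3^{i+2}+3^{i+1}+3^i$, and $40\cdot 3^i=3^{i+3}+3^{i+2}+3^{i+1}+3^i$. These let every scaled family in Corollary~\ref{cor:N27} be written as a sum of powers of~$3$ with small multiplicities subject to prescribed gaps. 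The essential mechanism is that the finer gap conditions of Corollary~\ref{cor:N27} (of the type $i_1-2>i_2-1>i_3$, i.e.\ consecutive gaps~$\ge2$) record ternary carries that are visible modulo~$27$ but partly invisible modulo~$9$; hence families separated modulo~$27$ by whether a gap equals~$1$ or is~$\ge2$ must merge. For example, the four-power family of Corollary~\ref{cor:N27}(ix) with all gaps~$\ge2$ recombines with the scaled families carrying a coefficient~$4$ (where one gap has collapsed to~$1$) to produce the single family $n=3^{i_1}+3^{i_2}+3^{i_3}+3^{i_4}$, $i_1>i_2>i_3>i_4\ge0$, of Corollary~\ref{cor:N9}(iii).

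Next I would perform this merge residue class by residue class, checking in each case that the union of the relevant $n$-families equals the family stated in Corollary~\ref{cor:N9}, and that the sporadic small values are correctly absorbed: thus $n=4$ (from the class~$23$ modulo~$27$) extends $n=4\cdot 3^i$, $i\ge1$, to $i\ge0$ in item~(v), and the values $n=3,7,10,13$ similarly fold into or extend general families. One must also verify disjointness and completeness. Completeness of the ``divisible by~$9$'' clause is automatic: the residues $7,16,25$ (all $\equiv7$) and $8,17,26$ (all $\equiv8$) modulo~$27$ all belong to the never-occurring list of Corollary~\ref{cor:N27}(xvii), whence $N_n\not\equiv 7,8\pmod 9$.

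The hard part is purely combinatorial bookkeeping: proving, class by class, that the union of the gap-conditioned, coefficient-scaled families of Corollary~\ref{cor:N27} coincides with the simpler family of Corollary~\ref{cor:N9}, with every boundary exponent accounted for (the transitions $i\ge0$ versus $i\ge1$, and the collapse of a gap from~$\ge2$ to~$\ge1$), and that the resulting families partition all~$n$. As a self-contained alternative that sidesteps this reconciliation, one could instead reduce the explicit polynomial in $\Phi(z)$ of Theorem~\ref{thm:noncr27} modulo~$9$, re-expand it in the series $H_{a_1,\dots,a_r}(z)$ via \eqref{eq:Phipot} and Proposition~\ref{conj:H}, and read off the congruences using the coefficient-extraction algorithm of Remark~\ref{rem:eff}, exactly as in the proof of Corollary~\ref{cor:N27}; but the reduction from Corollary~\ref{cor:N27} is the more economical route.
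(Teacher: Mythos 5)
Your primary route is the paper's own: the paper justifies Corollary~\ref{cor:N9} by nothing more than the remark that it is Corollary~\ref{cor:N27} ``considered modulo~$9$,'' and your regrouping of residues ($\{2,11,20\}\mapsto2$, $\{3,12,21\}\mapsto3$, etc.) is the right formalisation of that remark. The genuine gap is that the class-by-class reconciliation you assert (``the union of the corresponding $n$-families \dots should give the description in Corollary~\ref{cor:N9}'') is false as stated, because Corollary~\ref{cor:N27} as printed is defective, and its modulo-$9$ reduction does \emph{not} reproduce Corollary~\ref{cor:N9}. It is internally inconsistent ($n=7$ lies both in item~(iii), as $2\cdot3^i+1$ with $i=1$, and in item~(ix); $n=22$ lies both in item~(vi) and in item~(xvi)), and, fatally for your merge, it is incomplete: $n=14=3^2+3^1+2\cdot3^0$ and $n=15=3^2+2\cdot3^1$ belong to \emph{no} family of its items (i)--(xvi) --- every candidate family excludes them through its gap conditions --- so Corollary~\ref{cor:N27} asserts $N_{14}\equiv N_{15}\equiv0\pmod{27}$, hence $\equiv0\pmod{9}$. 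This contradicts items~(vi) and~(iii) of Corollary~\ref{cor:N9}, and it is the mod-$9$ statement that is correct: solving \eqref{eq:fmdiff} iteratively modulo~$27$ (the equation yields an integral recurrence for the $N_n$) gives $N_{14}\equiv15$ and $N_{15}\equiv12\pmod{27}$. Thus, precisely for the target residues $3$ and $6$ --- the classes you did not spot-check --- the union of the printed mod-$27$ families is a proper subset of what Corollary~\ref{cor:N9} claims, and the bookkeeping you defer can never close. (One further mismatch, in the opposite direction: classes $4$ and $13$ of Corollary~\ref{cor:N27} unite to $n=3^i$ with $i\ge1$, not $i\ge0$; here the reduction is right, and item~(iv) of Corollary~\ref{cor:N9} is a typo --- as printed it contradicts item~(i) at $n=1$.)

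The repair is to promote your last-mentioned ``alternative'' to the actual proof, since it bypasses the itemised Corollary~\ref{cor:N27} altogether: reduce \eqref{eq:Loes1} modulo~$9$ and re-expand via \eqref{eq:Phipot} and Proposition~\ref{conj:H} --- equivalently, reduce modulo~$9$ the $H$-expansion displayed in the paper's proof of Corollary~\ref{cor:N27}, which (unlike the itemised list extracted from it) is consistent with the true values above. Every coefficient there that is divisible by~$9$ dies, leaving
\begin{multline*}
\sum_{n\ge1}N_n\,z^n = 6z+4H_1(z)+H_2(z)+2H_{1,1}(z)
+3\left(H_{2,1}(z)+H_{1,2}(z)+H_{2,2}(z)+H_{4}(z)+H_{1,1,1,1}(z)\right)\\
+6\left(H_{1,1,1}(z)+H_{2,1,1}(z)+H_{1,2,1}(z)+H_{1,1,2}(z)\right)
\quad\quad\text{modulo }9.
\end{multline*}
Since ternary expansions are unique and all indices here lie in $\{1,2\}$ except for the single part $4=3+1$, the supports of these series are pairwise disjoint, with exactly two exceptions: $H_4$ sits inside $H_{1,1}$ (their overlap $n=4\cdot3^i$ receives $2+3=5$, giving item~(v), while the complementary range $i_1-1>i_2$ gives item~(ii)), and the monomial $6z$ meets $H_1$ at $n=1$ (giving $6+4\equiv1$, the exceptional value in item~(i)). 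Reading off the remaining coefficients via Remark~\ref{rem:eff} then yields all items of Corollary~\ref{cor:N9} at once --- with item~(iv) corrected to $i\ge1$.
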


%
%

\begin{remark}
In the first proof of the conjecture \eqref{eq:DeSaAA} in
\cite{EuLYAA}, Eu, Liu and Yeh worked from the sum expression
$$
N_n=\frac {1} {n-1}\sum_{i=n-2}^{2n-4}\binom {3n-3}{n+i+1}\binom {i}{n-2}
$$
which is provided in \cite[Theorem~2]{FN}. In order to achieve the
proof, they considered four auxiliary sums of very similar nature,
called there $f_i(n)$, $i=1,2,3,4$, of which they
determined their behaviour modulo~$3$. These sums, and the numbers
$N_n$ of connected non-crossing graphs, are reconsidered by Gessel
in \cite{GessAZ}, who provides simpler proofs of the congruences
modulo~$3$ established in \cite{EuLYAA}, and also closed form
expressions for the sums $f_i(n)$, $i=1,2,3,4$. Gessel adds
another sum, called $f_5(n)$, of which he also determines its
behaviour modulo~$3$.

All these sums can be treated in the same way as the numbers $N_n$
in Theorems~\ref{thm:N}, \ref{thm:noncr27}, and
Corollaries~\ref{cor:N27}, \ref{cor:N9}, as we are going to explain now. 
In particular, in this way the behaviour of any of $f_i(n)$,
$i=1,2,3,4,5$, modulo {\it any} power of~$3$ can be determined.

The general family
of sums introduced by Gessel in \cite{GessAZ} is
$$
g_{j,k,l}(n)=\sum_{i=n-l}^{2n+j-k}\binom {3n+j}{n+i+k}\binom i{n-l},
$$
of which the sums $f_i(n)$, $i=1,2,3,4,5$, are the cases
$(j,k,l)=(1,1,0)$, $(0,1,0)$, $(0,0,0)$, $(-1,1,1)$, $(0,1,1)$, 
in this order.
Gessel \cite[Sec.~3]{GessAZ} proves that the generating function
$$
G_{j,k,l}(z)=\sum_{n=k-j-l}^\infty g_{j,k,l}(n) z^n
$$
for these sums is given by
$$
G_{j,k,l}(z)=\frac {(1-2\al(z))^l\,\al^{k-j-l}(z)} 
{(1-6\al(z)+6\al^2(z))\,(1-\al(z))^{k-1}},
$$
where $\al(z)$ is the compositional inverse of $z(1-z)(1-2z)$.
The series $\al(z)$ thus being algebraic of degree~$3$, also
all series $G_{j,k,l}(z)$ are algebraic of degree~$3$.
Below we list the algebraic equations satisfied by the generating
functions for $f_i(n)$, $i=1,2,3,4,5$, and in each case we provide
the base solution $F_1(z)$ for the modulus $3^{3^\al}$ 
(cf.\ \eqref{eq:Ansatz1} with $p=3$) 
for starting our method described in Section~\ref{sec:method}.
The iterative steps carry through as well, as in the proof
of Theorem~\ref{thm:N}. We leave the details to the reader.

The generating function for the numbers $f_1(n)$ satisfies
the equation
\begin{equation} \label{eq:f1}
(1-108z^2)G^3_{1,1,0}(z) 
-3G_{1,1,0}(z) +2=0.
\end{equation}
A base solution for the modulus $3^{3^\al}$ is $F_1(z)=1$.
The generating function for the numbers $f_2(n)$ satisfies
the equation
\begin{equation} \label{eq:f2}
(1-108z^2)G^3_{0,1,0}(z) 
-(1+9z)G_{0,1,0}(z) +z=0,
\end{equation}
and a base solution is $F_1(z)=\Phi^{3^\al}(z)+s_\al(z)$.
Next, the generating function for the numbers $f_3(n)$ satisfies
the equation
\begin{equation} \label{eq:f3}
(1-108z^2)G^3_{0,0,0}(z) 
-(1+9z)G_{0,0,0}(z) -z=0.
\end{equation}
Here, a base solution is $F_1(z)=-\Phi^{3^\al}(z)-s_\al(z)+1$.
The generating function for the numbers $f_4(n)$ satisfies
the equation
\begin{equation} \label{eq:f4}
(1-108z^2)G^3_{-1,1,1}(z) 
+(1-108z^2)G^2_{-1,1,1}(z) 
+3z(1-12z)G_{-1,1,1}(z) -4z^2=0.
\end{equation}
Since, modulo~$3$, this is the same equation as \eqref{eq:fmdiff},
this generating function has the same base solution as $N(z)$,
namely the one given by \eqref{eq:baseN}.
Finally, the generating function for the numbers $f_5(n)$ satisfies
the equation
\begin{equation} \label{eq:f5}
(1-108z^2)G^3_{0,1,1}(z) 
-G_{0,1,1}(z) -8z=0.
\end{equation}
Since, modulo~$3$, this is the same equation as \eqref{eq:f2}, 
(essentially) the same base solution (the difference occurring in the
constant term), 
namely $F_1(z)=\Phi^{3^\al}(z)+s_\al(z)+1$, applies.
\end{remark}

\section{The number of Kreweras walks modulo $3$-powers}
\label{sec:Krew}

The subject of this section are congruences modulo powers of~$3$ 
for the numbers of so-called ``Kreweras walks." In his thesis
\cite{KrewAB}, Kreweras had considered (among many other things)
a three-candidate ballot problem, where each of the three candidates,
A, B, C, say,
receives $n$~votes. The problem which is posed is how many ways
there are to count the votes so that at each point in time candidate~A
has at least as many votes as the number of votes for each of the
other candidates. It is not difficult to see that this problem can
be translated into the following problem of counting walks
in the quarter plane: how many
lattice walks in the plane 
from the origin to itself are there which consist of $3n$ steps from the set
$\{(1,1),\, (-1,0),\, (0,-1)\}$ and stay in the
non-negative quadrant?
Let $K_n$ denote the number of these walks. By definition, we set $K_0=1$.
Kreweras \cite{KrewAB} proved that
\begin{equation} \label{eq:Kn} 
K_n=\frac {4^n} {(n+1)(2n+1)}\binom {3n}n;
\end{equation}
see also \cite{NiedAJ}. To prove this formula turned out to be
highly non-trivial, as well as the enumerative analysis of
the more general question of how many walks there are which
end at an arbitrary point in the non-negative quadrant.
Kreweras \cite{KrewAB} and Niederhausen \cite{NiedAJ} obtained
partial results in this direction. Finally, Bousquet-M\'elou
\cite{BousAH} succeeded to compute the complete generating
function for these numbers of walks.

Let $K(z):=\sum _{n\ge0} ^{}K_n\,z^n$ be the generating
function for the ``Kreweras numbers'' in \eqref{eq:Kn}.
From \cite[Theorem~1]{BousAH},
it can be derived that
\begin{equation} \label{eq:Kdiff}
64z^2\,K^3(z) +16z\, K^2(z) - (72z-1)\,K(z) + 54z-1 = 0.
\end{equation}
Clearly, this equation determines $K(z)$
uniquely as a formal power series over~$\Z$, and as well modulo any
power of~$3$.

\begin{theorem} \label{thm:K}
Let $\Phi(z)=\sum _{n\ge0} ^{}z^{3^n},$ and let $\al$ be a
non-negative integer.
Then the generating function $K(z),$ when reduced modulo $3^{3^\al},$ 
can be expressed as a polynomial in $\Phi(z^{1/2})$ of degree at most
$3^{\al+1}-1,$ with coefficients that are Laurent polynomials in $z^{1/2}$
over the integers.
\end{theorem}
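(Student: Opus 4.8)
The plan is to reduce the statement to the framework of Section~\ref{sec:method} by the change of variable $z=w^2$, and then to follow the proof of Theorem~\ref{thm:N} almost verbatim. Writing $w=z^{1/2}$ and substituting into \eqref{eq:Kdiff}, the Kreweras generating function $K$ satisfies
\[
64w^4 K^3 + 16w^2 K^2 - (72w^2-1)K + 54w^2 - 1 = 0 ,
\]
an honest polynomial equation over $\Z$ in the single variable $w$ that still determines $K$ uniquely as a power series and modulo every power of~$3$. I would now run the algorithm of Section~\ref{sec:method} (with $p=3$) in the variable $w$, seeking $K$ as a polynomial in $\Phi(w)=\Phi(z^{1/2})$ of degree at most $3^{\al+1}-1$ with Laurent-polynomial coefficients in $w$. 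Since a polynomial in $\Phi(z^{1/2})$ with coefficients that are Laurent polynomials in $z^{1/2}$ is precisely what the theorem asks for, success of the algorithm yields the claim.

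For the base step I reduce the equation modulo~$3$. Using $64\equiv16\equiv1$, $72\equiv54\equiv0$ and $-1\equiv2$, it becomes $w^4 K^3 + w^2 K^2 + K + 2\equiv 0$ modulo~$3$. Invoking $\Phi'(w)\equiv1$ and the reduction \eqref{eq:PhiRel2} (with $p=3$ and $w$ in place of $z$), I would exhibit an explicit base solution $F_1(w)$ — a polynomial in $\Phi(w)$ of the same shape as \eqref{eq:baseN} — and verify by direct substitution, with the help of the Frobenius identity \eqref{eq:sal}, that it solves the congruence; the first coefficients $K\equiv 1+2z+z^2+\cdots$ modulo~$3$ guide the guess. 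This is routine bookkeeping once the correct $F_1$ is written down.

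The iterative step is where the real content lies. Writing $F_{\be+1}=F_\be+3^\be\sum_i b_{i,\be+1}\Phi^i$ and expanding the transformed equation by Taylor's formula, all contributions beyond the term linear in $3^\be$ vanish modulo $3^{\be+1}$; after dividing by $3^\be$, the linear system for the $b_{i,\be+1}$ is governed by the formal $K$-derivative of the left-hand side, reduced modulo~$3$, namely $192w^4K^2+32w^2K-(72w^2-1)\equiv 2w^2K+1$. Exactly as in \eqref{eq:Abc}, comparison of coefficients of powers of $\Phi(w)$ produces a system $M\cdot b=c$ modulo~$3$, where $M$ is the matrix of multiplication by $g(\Phi):=2w^2F_1(\Phi)+1$ on the quotient $R[\Phi]/\bigl((\Phi^3-\Phi+w)^{3^\al}\bigr)$ with $R=(\Z/3\Z)[w,w^{-1}]$; here I use that $\Phi^{3^{\al+1}}-\Phi^{3^\al}+w^{3^\al}=(\Phi^3-\Phi+w)^{3^\al}$ modulo~$3$ by Frobenius.

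The main obstacle — just as in Theorem~\ref{thm:N}, where one checks $\det M=z^{2\cdot3^\al}$ — is to show that $M$ is invertible, i.e.\ that $\det M$ is a unit in $R$. Since the determinant of a multiplication operator is a norm, $\det M=\pm\,\mathrm{Res}_\Phi\bigl(\Phi^3-\Phi+w,\,g(\Phi)\bigr)^{3^\al}=\pm\bigl(\prod_{j=1}^3(2w^2\kappa_j+1)\bigr)^{3^\al}$, where $\kappa_j=F_1(\rho_j)$ and $\rho_1,\rho_2,\rho_3$ are the roots of $\Phi^3-\Phi+w$ (which are $\rho,\rho+1,\rho+2$, so the cubic is irreducible over $\mathbb{F}_3(w)$ by Artin–Schreier). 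Because $F_1$ solves the equation modulo the relation, each $\kappa_j$ is a root of $w^4\kappa^3+w^2\kappa^2+\kappa-1=0$; a short degree argument shows this $\kappa$-cubic has no root in $\mathbb{F}_3(w)$, hence is irreducible, so the Galois-stable triple $\{\kappa_j\}$ consists of its three distinct roots, with $\sigma_1=-w^{-2}$ and $\sigma_2=\sigma_3=w^{-4}$. Therefore $\prod_{j}(2w^2\kappa_j+1)=8w^6\sigma_3+4w^4\sigma_2+2w^2\sigma_1+1=8w^2+3\equiv 2w^2$ modulo~$3$, whence $\det M=\pm(2w^2)^{3^\al}$ is a unit in $R$. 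Thus $M\cdot b=c$ is uniquely solvable at every stage, the induction on $\be$ runs through $\be=3^\al$, and the resulting $F_{3^\al}(z^{1/2})$ is the asserted representation of $K$ modulo $3^{3^\al}$. I expect the only genuine effort beyond this to be pinning down the explicit $F_1$; the computation above shows the iteration itself can never break down.
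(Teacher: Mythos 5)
Your reduction to the variable $w=z^{1/2}$ is exactly right, and your analysis of the iterative step is correct (see below), but there is a genuine gap: you never exhibit the base solution $F_1$, and nothing in your argument guarantees that one exists. This is not ``routine bookkeeping.'' The method of Section~\ref{sec:method} can fail at any stage, including the very first one (cf.\ Remark~\ref{rem:comp}): the claim that the unique mod-$3$ solution of $w^4K^3+w^2K^2+K-1\equiv 0$ can be written as a polynomial in $\Phi(w)$ of degree less than $3^{\al+1}$, \emph{modulo the relation} $(\Phi^3-\Phi+w)^{3^\al}$, simultaneously for every $\al$, is a substantive statement, and it is precisely what the paper spends roughly half of its proof establishing: it writes down $a_{0,1}=z^{-1}s_\al^2(z^{1/2})$, $a_{3^\al,1}=-z^{-1}s_\al(z^{1/2})$, $a_{2\cdot3^\al,1}=z^{-1}$ (equivalently, $w^2F_1=\bigl(\Phi^{3^\al}(w)+s_\al(w)\bigr)^2$ modulo $3$) and verifies it by an explicit computation with \eqref{eq:PhiRel2} and \eqref{eq:sal}. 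Note also that your own invertibility argument is \emph{conditional} on this $F_1$: the elements $\kappa_j=F_1(\rho_j)$ and the identity $w^4\kappa_j^3+w^2\kappa_j^2+\kappa_j-1=0$ have no content until $F_1$ is produced. As written, your proposal proves only the implication ``if a base solution of the Ansatz form exists for every $\al$, then the theorem holds.''

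That said, your treatment of the iterative step is correct and genuinely different from the paper's, and in a way that is instructive. You identify the linear system as multiplication by $P'(F_1)\equiv 2w^2F_1+1\equiv 1-w^2F_1$ on $R[\Phi]/\bigl((\Phi^3-\Phi+w)^{3^\al}\bigr)$ with $R=(\Z/3\Z)[w,w^{-1}]$ (which agrees with the paper's matrix $M$), and you compute $\det M$ as a norm: $\det M=\pm\bigl(\prod_{j}(2w^2\kappa_j+1)\bigr)^{3^\al}=\pm(2w^2)^{3^\al}$ via the symmetric functions $\sigma_1=-w^{-2}$, $\sigma_2=\sigma_3=w^{-4}$ of the $\kappa$-cubic. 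This matches the paper's $\det M=z^{3^\al}$ up to the irrelevant unit $2^{3^\al}\equiv 2$, and it is more conceptual: the paper's block-row reduction uses the explicit entries of $M$, hence the explicit form of $F_1$, whereas your computation uses only the fact that $F_1$ solves the base congruence, so it would transfer to other equations with no extra work. The points you compress are all fillable: the ``short degree argument'' that $w^4\kappa^3+w^2\kappa^2+\kappa-1$ has no root in $\mathbb{F}_3(w)$ is a standard coprimality/degree check, and passing from ``Galois-stable triple'' to ``the three distinct roots'' works because the Artin--Schreier extension is cyclic of order $3$, so the $\kappa_j$ are either all equal (impossible, since an irreducible cubic has no rational root) or pairwise distinct. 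But this elegance does not repair the missing base step; to complete the proof you must still write down $F_1$ and verify it, as the paper does.
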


\begin{proof} 
We apply the method from Section~\ref{sec:method}. We start by
substituting the Ansatz \eqref{eq:Ansatz1} with $z$ replaced
by $z^{1/2}$ in \eqref{eq:Kdiff} and reducing the result modulo $3$. 
In this way, we obtain
\begin{multline} \label{eq:K-1}
z^2\sum _{i=0} ^{3^{\al+1}-1}a_{i,1}^3(z^{1/2})\Phi^{3i}(z^{1/2})+
z\sum _{i=0} ^{3^{\al+1}-1}a_{i,1}^2(z^{1/2})\Phi^{2i}(z^{1/2})\\
-z\sum _{0\le i<j\le 3^{\al+1}-1}a_{i,1}(z^{1/2})a_{j,1}(z^{1/2})\Phi^{i+j}(z^{1/2})\\
+\sum _{i=0} ^{3^{\al+1}-1}a_{i,1}(z^{1/2})\Phi^{i}(z^{1/2})
-1=0\quad \text
{modulo }3.
\end{multline}
We claim that the following choices solve the above congruence:
\begin{align*}
a_{0,1}(z^{1/2})&=z^{-1}s^2_\al(z^{1/2})
\quad \text {modulo }3,\\[2mm]
a_{3^{\al},1}(z^{1/2})&=-z^{-1}s_\al(z^{1/2})
\quad \text {modulo }3,\\
a_{2\cdot 3^{\al},1}(z^{1/2})&=z^{-1}\quad \text {modulo }3,
\end{align*}
where $s_\al(z)$ is the same polynomial as in
Section~\ref{sec:noncr}, 
with all other $a_{i,1}(z^{1/2})$ vanishing.
In order to verify the claim, we substitute our choices in
\eqref{eq:K-1}. For the left-hand side, we obtain
\begin{multline*} 
z^2\left(z^{-3}\Phi^{2\cdot 3^{\al+1}}(z^{1/2})
-z^{-3}s^3_\al(z^{1/2})\Phi^{3^{\al+1}}(z^{1/2})+z^{-3}s^6_\al(z^{1/2})
\right)\\
+z\left(z^{-2}\Phi^{4\cdot 3^{\al}}(z^{1/2})
+z^{-2}s^2_\al(z^{1/2})\Phi^{2\cdot 3^{\al}}(z^{1/2})+z^{-2}s^4_\al(z^{1/2})
\right)\\
-z\left(
-z^{-2}s_\al(z^{1/2})\Phi^{3^{\al+1}}(z^{1/2})
+z^{-2}s^2_\al(z^{1/2})\Phi^{2\cdot 3^{\al}}(z^{1/2})
-z^{-2}s^3_\al(z^{1/2})\Phi^{3^{\al}}(z^{1/2})
\right)\\
+z^{-1}\Phi^{2\cdot 3^{\al}}(z^{1/2})
-z^{-1}s_\al(z^{1/2})\Phi^{3^{\al}}(z^{1/2})+z^{-1}s^2_\al(z^{1/2})
-1
\quad \quad \text{modulo 3}.
\end{multline*}
Using the relation \eqref{eq:PhiRel2} with $p=3$ and $z$ replaced
by $z^{1/2}$,
and reducing the obtained expression modulo~3,
we arrive at
\begin{multline*} 
\left(z^{-1}
\Phi^{2\cdot 3^{\al}}(z^{1/2})
+\Phi^{3^{\al}}(z^{1/2})z^{3^{\al}/2-1}+z^{3^{\al}-1}\right)\\
-\left(z^{-1}s^3_\al(z^{1/2})
\Phi^{3^{\al}}(z^{1/2})-z^{3^{\al}/2-1}s^3_\al(z^{1/2})\right)
+z^{-1}s^6_\al(z^{1/2})
\\
+\left(z^{-1}
\Phi^{2\cdot 3^{\al}}(z^{1/2})-z^{3^{\al}/2-1}\Phi^{3^{\al}}(z^{1/2})\right)
+z^{-1}s^2_\al(z^{1/2})\Phi^{2\cdot 3^{\al}}(z^{1/2})+z^{-1}s^4_\al(z^{1/2})
\\
+\left(z^{-1}s_\al(z^{1/2})
\Phi^{3^{\al}}(z^{1/2})-z^{3^{\al}/2-1}s_\al(z^{1/2})\right)\\
-z^{-1}s^2_\al(z^{1/2})\Phi^{2\cdot 3^{\al}}(z^{1/2})
+z^{-1}s^3_\al(z^{1/2})\Phi^{3^{\al}}(z^{1/2})
\\
+z^{-1}\Phi^{2\cdot 3^{\al}}(z^{1/2})
-z^{-1}s_\al(z^{1/2})\Phi^{3^{\al}}(z^{1/2})+z^{-1}s^2_\al(z^{1/2})
-1
\quad \quad \text{modulo 3}.
\end{multline*}
By collecting terms, this expression simplifies to 
\begin{multline*}
z^{3^{\al}-1}
+z^{3^{\al}/2-1}s^3_\al(z^{1/2})
+z^{-1}s^6_\al(z^{1/2})
+z^{-1}s^4_\al(z^{1/2})\\
-z^{3^{\al}/2-1}s_\al(z^{1/2})
+z^{-1}s^2_\al(z^{1/2})
-1
\quad \quad \text{modulo 3}.
\end{multline*}
By repeatedly using the relation \eqref{eq:sal} with $z$ replaced by
$z^{1/2}$, this expression can be turned into
\begin{multline*}
z^{3^{\al}-1}
+z^{3^{\al}/2-1}\left(s_\al(z^{1/2})+z^{3^{\al}/2}-z^{1/2}\right)
+z^{-1}\left(s_\al(z^{1/2})+z^{3^{\al}/2}-z^{1/2}\right)^2\\
+z^{-1}s_\al(z^{1/2})\left(s_\al(z^{1/2})+z^{3^{\al}/2}-z^{1/2}\right)
-z^{3^{\al}/2-1}s_\al(z^{1/2})
+z^{-1}s^2_\al(z^{1/2})
-1
\quad \quad \text{modulo 3}.
\end{multline*}
After expansion and reduction modulo~$3$, one sees that this
expression reduces to zero.

\medskip
After we have completed the ``base  step," we now proceed with the
iterative steps described in Section~\ref{sec:method}. We consider
the Ansatz \eqref{eq:Ansatz2}--\eqref{eq:Ansatz2b} with $z$ replaced
by~$z^{1/2}$, where the
coefficients $a_{i,\be}(z^{1/2})$ are supposed to provide a solution
$$F_{\be}(z)=\sum _{i=0}
^{3^{\al+1}-1}a_{i,\be}(z^{1/2})\Phi^i(z^{1/2})$$ 
to
\eqref{eq:Kdiff} modulo~$3^\be$. This Ansatz, substituted in
\eqref{eq:Kdiff}, produces the congruence
\begin{multline} \label{eq:ItGlK}
-3^\be z\sum _{j=0} ^{3^{\al+1}-1}\sum _{i=0} ^{3^{\al+1}-1}
a_{j,\be}(z^{1/2})b_{i,\be+1}(z^{1/2})\Phi^{i+j}(z^{1/2})
+3^\be\sum _{i=0} ^{3^{\al+1}-1}b_{i,\be+1}(z^{1/2})\Phi^{i}(z^{1/2})\\
+64z^2F^3_{\be}(z)
+16z F^2_{\be}(z)-(72z-1)F_{\be}(z)+54z-1=0
\quad \quad 
\text {modulo }3^{\be+1}.
\end{multline}
Since the sum has the prefactor $3^\be$, we may reduce the
$a_{i,\be}(z^{1/2})$ modulo $3$. By construction, we have
\begin{align*}
a_{0,\be}(z^{1/2})&=a_{0,1}(z^{1/2})=z^{-1}s^2_\al(z^{1/2})
\quad \quad \text{modulo }3,\\
a_{3^\al,\be}(z^{1/2})&=a_{3^\al,1}(z^{1/2})=-z^{-1}s_\al(z^{1/2})
\quad \quad \text{modulo }3,\\
a_{2\cdot3^\al,\be}(z^{1/2})&=a_{2\cdot3^\al,1}(z^{1/2})=z^{-1}
\quad \quad \text{modulo }3,
\end{align*}
and $a_{i,\be}(z^{1/2})=0$ modulo 3 for all other $i$'s.
If we substitute this in \eqref{eq:ItGlK} and subsequently use
\eqref{eq:PhiRel2} with $p=3$ and $z$ replaced by $z^{1/2}$ 
to reduce high powers of $\Phi(z^{1/2})$, we obtain
\begin{multline*} 
-3^\be \sum _{i=0} ^{3^{\al+1}-1}
s^2_\al(z^{1/2})b_{i,\be+1}(z^{1/2})\Phi^{i}(z^{1/2})
+3^\be \sum _{i=0} ^{3^{\al+1}-1}
s_\al(z^{1/2})b_{i,\be+1}(z^{1/2})\Phi^{i+3^\al}(z^{1/2})\\
-3^\be \sum _{i=0} ^{3^{\al+1}-1}
b_{i,\be+1}(z^{1/2})\Phi^{i+2\cdot 3^\al}(z^{1/2})
+3^\be\sum _{i=0} ^{3^{\al+1}-1}b_{i,\be+1}(z^{1/2})\Phi^{i}(z^{1/2})\\
+64z^2F^3_{\be}(z)
+16z F^2_{\be}(z)-(72z-1)F_{\be}(z)+54z-1\\
=
-3^\be
\sum _{i=0} ^{3^{\al}-1}
\left(
s^2_\al(z^{1/2})b_{i,\be+1}(z^{1/2})
+z^{3^\al/2}s_\al(z^{1/2})b_{i+2\cdot 3^\al,\be+1}(z^{1/2})
\right.
\kern3cm
\\
\kern3cm
\left.
-z^{3^\al/2}b_{i+3^\al,\be+1}(z^{1/2})
-b_{i,\be+1}(z^{1/2})
\right)
\Phi^{i}(z^{1/2})\\
-3^\be
\sum _{i=3^\al} ^{2^\cdot3^{\al}-1}
\left(
s^2_\al(z^{1/2})b_{i,\be+1}(z^{1/2})
-s_\al(z^{1/2})b_{i-3^\al,\be+1}(z^{1/2})
\right.
\kern3cm
\\
\kern3cm
\left.
-s_\al(z^{1/2})b_{i+3^\al,\be+1}(z^{1/2})
-z^{3^\al/2}b_{i+3^\al,\be+1}(z^{1/2})
\right)
\Phi^{i}(z^{1/2})\\
-3^\be
\sum _{i=2\cdot3^\al} ^{3^{\al+1}-1}
\left(
s^2_\al(z^{1/2})b_{i,\be+1}(z^{1/2})
-s_\al(z^{1/2})b_{i-3^\al,\be+1}(z^{1/2})
+b_{i-2\cdot 3^\al,\be+1}(z^{1/2})
\right)
\Phi^{i}(z^{1/2})\\
+64z^2F^3_{\be}(z)
+16z F^2_{\be}(z)-(72z-1)F_{\be}(z)+54z-1=0
\quad \quad 
\text {modulo }3^{\be+1}.
\end{multline*}
By our assumption on $F_{\be}(z)$, we may divide by $3^\be$.
Comparison of powers of $\Phi(z^{1/2})$ then yields a system of congruences
of the form
\begin{equation} \label{eq:AbcK} 
M\cdot b=c\quad \text{modulo }3,
\end{equation}
where $b$ is the column vector of unknowns
$(b_{i,\be+1}(z^{1/2}))_{i=0,1,\dots,3^{\al+1}-1}$, $c$ is a (known)
column vector of Laurent polynomials in $z^{1/2}$, and $M$ is the matrix
$$
\begin{pmatrix} 
D\left(s^2_\al(z^{1/2})-1\right)&D\left(-z^{3^\al/2}\right)&
D\left(z^{3^\al/2}s_\al(z^{1/2})\right)\\
D\left(-s_\al(z^{1/2})\right)&D\left(s^2_\al(z^{1/2})\right)&
D\left(-s_\al(z^{1/2})-z^{3^\al/2}\right)\\
D(1)&D\left(-s_\al(z^{1/2})\right)&D\left(s^2_\al(z^{1/2})\right)
\end{pmatrix},
$$
with $D(x)$ denoting the $3^\al\times3^\al$ diagonal matrix whose
diagonal entries equal $x$, as before. 
In the same manner as in the proof of Theorem~\ref{thm:N},
on sees that
$$
\det (M)=z^{3^{\al}}\quad \quad \text{modulo }3.
$$
As a consequence,
the system \eqref{eq:AbcK} is
(uniquely) solvable. Thus, we have proved that, for an arbitrary non-negative
integer $\al$, the algorithm of
Section~\ref{sec:method} will produce a solution $F_{{3^{\al}}}(z)$ 
to \eqref{eq:Kdiff} modulo $3^{3^\al}$ which is a
polynomial in $\Phi(z^{1/2})$ with coefficients that are Laurent polynomials in
$z^{1/2}$.
\end{proof}

We have implemented this algorithm. As an illustration, the next
theorem contains the result for the modulus $27$.

{
\allowdisplaybreaks
\begin{theorem} \label{thm:K27}
Let $\Phi(z)=\sum _{n\ge0} ^{}z^{3^n}$.
Then we have
\begin{multline} 
\label{eq:LoesK1}
\sum _{n\ge0} ^{}K_n\,z^n
=
1
+ 12 \Phi^2(z^{1/2})
+ 11 z^{-1/2} \Phi^3(z^{1/2}) 
+ 6 z^{-1} \Phi^4(z^{1/2}) 
+ 15 z^{-1/2} \Phi^5(z^{1/2}) \\
+ z^{-1} \Phi^6(z^{1/2}) 
+ 3 z^{-1} \Phi^8(z^{1/2})
\quad \quad 
\text {\em modulo }27.
\end{multline}
\end{theorem}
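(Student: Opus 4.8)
The plan is to treat \eqref{eq:LoesK1} not as a fresh calculation but as the explicit output of the algorithm underlying Theorem~\ref{thm:K} in the case $\al=1$ (so that $3^{3^\al}=27$, and the degree bound $3^{\al+1}-1=8$ is exactly the degree of the right-hand side in $\Phi(z^{1/2})$), and then to \emph{verify} the displayed expression directly. Writing $F(z)$ for the right-hand side of \eqref{eq:LoesK1} and $w=z^{1/2}$, the first task is to confirm that $F(z)$ is a genuine power series in~$z$ with the correct initial term. Since $\Phi(w)=\sum_{n\ge0}w^{3^n}$ involves only odd powers of~$w$, each summand of $F(z)$ pairs an even or odd power of $\Phi(w)$ with a compensating factor $z^{-1/2}$ or $z^{-1}$ in such a way that only even powers of~$w$, i.e.\ honest integer powers of~$z$, survive; inspecting lowest-order terms then shows that every summand other than the leading~$1$ begins at $z^1$ or higher, so that $F(0)=1=K_0$.

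Next I would substitute $F(z)$ into the defining equation \eqref{eq:Kdiff}, forming $64z^2F^3(z)+16zF^2(z)-(72z-1)F(z)+54z-1$. Expanding $F^2(z)$ and $F^3(z)$ produces a polynomial in $\Phi(w)$ of degree up to~$24$, whose powers beyond $\Phi^8(w)$ must be reduced. The crucial point is that the reduction here has to be carried out modulo~$27$, hence with the \emph{full} relation \eqref{eq:PhiRel} (with $p=3$, $\al=1$, and $z$ replaced by $z^{1/2}$), namely $\big(\Phi^3(w)-\Phi(w)+w\big)^3\equiv0\pmod{27}$, rather than with its mod-$3$ simplification \eqref{eq:PhiRel2}: the cross terms in the expansion of the cube contribute at the levels~$3$ and~$9$ and genuinely matter. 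Using this relation repeatedly to rewrite $\Phi^9(w),\dots,\Phi^{24}(w)$ as $\Z[w,w^{-1}]$-combinations of $1,\Phi(w),\dots,\Phi^8(w)$ and then collecting terms, the expectation is that the whole expression reduces to~$0$ modulo~$27$. To argue rigorously that the collected expression $\sum_{i=0}^{8}c_i(w)\Phi^i(w)$ really vanishes as a series, I would (if the $c_i(w)$ do not already vanish termwise modulo~$27$) expand each $\Phi^i(w)$ into the series $H_{a_1,\dots,a_r}(w)$ with all $a_i$ coprime to~$3$ via \eqref{eq:Phipot} and Proposition~\ref{conj:H}, and then invoke the linear independence of Corollary~\ref{lem:Hind} to check that every coefficient is $\equiv0\pmod{27}$.

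Finally I would invoke uniqueness. As recorded just after \eqref{eq:Kdiff}, that equation determines $K(z)$ uniquely as a power series over~$\Z$, and likewise modulo any power of~$3$: the coefficient of the linear term is $1-72z$, a unit, so each $K_n$ is pinned down by $K_0,\dots,K_{n-1}$, and the same recursion holds modulo~$27$. Hence, once $F(z)$ is shown to be a power series in~$z$ satisfying \eqref{eq:Kdiff} modulo~$27$ with $F(0)=K_0$, it must coincide with $K(z)$ modulo~$27$, which is exactly the assertion of the theorem.

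I expect the main obstacle to be the computational bookkeeping in the middle step: the mod-$27$ reduction of the powers $\Phi^9(w),\dots,\Phi^{24}(w)$ through the cubed relation is error-prone precisely because one may \emph{not} discard the $3$- and $9$-divisible cross terms, as one could when working only modulo~$3$ in the proof of Theorem~\ref{thm:K}. As an independent sanity check run in parallel, I would expand $F(z)$ into the coprime-index series $H_{a_1,\dots,a_r}(z^{1/2})$, extract individual coefficients $[z^n]F(z)$ by the effective procedure of Remark~\ref{rem:eff}, and compare them with $K_n\bmod 27$ computed from the closed form \eqref{eq:Kn} for a range of~$n$; agreement on sufficiently many residues, together with the structural guarantee of Theorem~\ref{thm:K}, corroborates the formula.
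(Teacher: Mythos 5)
Your proposal is correct, but it takes a somewhat different route from the paper. The paper gives no verification argument for Theorem~\ref{thm:K27} at all: the displayed formula is simply presented as the output, in the case $\alpha=1$, of the implemented algorithm whose correctness is the content of the proof of Theorem~\ref{thm:K} (base solution modulo~3, followed by iterative steps, each uniquely solvable because the relevant matrix $M$ satisfies $\det M=z^{3^{\alpha}}$ modulo~3, a unit in the Laurent polynomial ring), combined with the uniqueness of the power series solution of \eqref{eq:Kdiff} modulo powers of~3. You instead verify the stated formula after the fact: substitute it into \eqref{eq:Kdiff}, reduce $\Phi^9(w),\dots,\Phi^{24}(w)$ using the full relation $\bigl(\Phi^3(w)-\Phi(w)+w\bigr)^3\equiv0\pmod{27}$ --- you are right that the mod-3 simplification \eqref{eq:PhiRel2} must not be used here and that the cross terms carrying factors of~3 cannot be discarded --- then certify the vanishing through the exact expansion into coprime-index $H$-series, and finally invoke uniqueness (your recursion argument, resting on the fact that $K_n$ enters the coefficient equations with the unit coefficient~$1$, is the justification the paper leaves as ``clearly''). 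This verification strategy is exactly what the paper itself employs for Theorems~\ref{thm:FCat27} and~\ref{thm:S27} (``once found, it is easily verified directly by substitution''), so it is entirely in the spirit of the paper; what it buys is self-containedness --- no reliance on Theorem~\ref{thm:K} or on the correctness of the implementation --- at the cost of the heavier mod-27 bookkeeping you identify. One minor logical refinement: Corollary~\ref{lem:Hind} is not actually needed to establish the vanishing, since the expansion via \eqref{eq:Phipot} and Proposition~\ref{conj:H} is an exact identity over $\Z[w,w^{-1}]$, so finding every coefficient divisible by~27 already forces the series to vanish modulo~27; linear independence is only what makes the test decisive in the opposite direction, guaranteeing that a surviving coefficient would mean the identity genuinely fails rather than merely that the test was inconclusive.
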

}

\begin{corollary} \label{cor:K}
The numbers $K_n$ of Kreweras walks
obey the following congruences modulo $27${\em:}

\begin{enumerate}
\item[(i)] 
$K_n\equiv1$~{\em (mod~$27$)} if, and only if, $n=0;$

\item[(ii)] 
$K_n\equiv2$~{\em (mod~$27$)} if, and only if, $n=1;$

\item[(iii)] 
$K_n\equiv3$~{\em (mod~$27$)} if, and only if, 
$n=3^i$ with $i\ge1$,
or
$n=\frac {1} {2}(13\cdot 3^i-1)$ with $i\ge2$,
or
$n=\frac {1} {2}(3^i+11)$ with $i\ge4$,
or
$n=2\cdot 3^i+1$ with $i\ge3$,
or
$n=20\cdot 3^i-1$ with $i\ge1$,
or
$n=\frac {1} {2}(3^{i_1}+3^{i_2}+3^{i_3}+3^{i_4}-2)$ 
with $i_1-3>i_2-2>i_3-1>i_4\ge1;$

\item[(iv)] 
$K_n\equiv5$~{\em (mod~$27$)} if, and only if, 
$n=\frac {1} {2}(3^{i_1}+3^{i_2}-2)$ with $i_1-2>i_2\ge2;$

\item[(v)] 
$K_n\equiv6$~{\em (mod~$27$)} if, and only if, 
$n=3^{i}+1$ with $i\ge3$,
or
$n=\frac {1} {2}(3^{i_1}+3^{i_2})$ with $i_1-2>i_2\ge1$,
or
$n=\frac {1} {2}(7\cdot3^{i_1}+3^{i_2}-2)$ with $i_1-1>i_2\ge1;$
or
$n=\frac {1} {2}(3^{i_1}+7\cdot3^{i_2}-2)$ with $i_1-1>i_2\ge1;$

\item[(vi)] 
$K_n\equiv7$~{\em (mod~$27$)} if, and only if, 
$n=3^i-1$ with $i\ge2;$

\item[(vii)] 
$K_n\equiv8$~{\em (mod~$27$)} if, and only if, 
$n=4$, or $n=2\cdot3^i-1$ with $i\ge2;$

\item[(viii)] 
$K_n\equiv9$~{\em (mod~$27$)} if, and only if, 
$n=3^{i_1}+3^{i_2}+3^{i_3}-1$ with $i_1>i_2>i_3\ge1$,
or
$n=\frac {1} {2}(2\cdot3^{i_1}+3^{i_2}+3^{i_3}+3^{i_4}+3^{i_5}-2)$ 
with $i_1>i_2>i_3>i_4>i_5\ge1$,
or
$n=\frac {1} {2}(3^{i_1}+2\cdot3^{i_2}+3^{i_3}+3^{i_4}+3^{i_5}-2)$ 
with $i_1>i_2>i_3>i_4>i_5\ge1$,
or
$n=\frac {1} {2}(3^{i_1}+3^{i_2}+2\cdot3^{i_3}+3^{i_4}+3^{i_5}-2)$ 
with $i_1>i_2>i_3>i_4>i_5\ge1$,
or
$n=\frac {1} {2}(3^{i_1}+3^{i_2}+3^{i_3}+2\cdot3^{i_4}+3^{i_5}-2)$ 
with $i_1>i_2>i_3>i_4>i_5\ge1$,
or
$n=\frac {1} {2}(3^{i_1}+3^{i_2}+3^{i_3}+3^{i_4}+2\cdot3^{i_5}-2)$ 
with $i_1>i_2>i_3>i_4>i_5\ge1;$

\item[(ix)] 
$K_n\equiv12$~{\em (mod~$27$)} if, and only if, 
$n=\frac {1} {2}(4\cdot3^{i_1}+3^{i_2}-1)$ with $i_1-1>i_2\ge2$,
or
$n=\frac {1} {2}(3^{i_1}+4\cdot3^{i_2}-1)$ with $i_1-2>i_2\ge2$,
or
$n=\frac {1} {2}(3^{i_1}+3^{i_2}+2)$ with $i_1-2>i_2\ge3$,
or
$n=\frac {1} {2}(13\cdot3^{i_1}+3^{i_2}-2)$ with $i_1-1>i_2\ge1$,
or
$n=2\cdot3^{i_1}+2\cdot3^{i_2}-1$ with $i_1-2>i_2\ge1$,
or
$n=\frac {1} {2}(3^{i_1}+13\cdot3^{i_2}-2)$ with $i_1-3>i_2\ge1;$

\item[(x)] 
$K_n\equiv14$~{\em (mod~$27$)} if, and only if, 
$n=\frac {1} {2}(3^i+1)$ with $i\ge4$, or $n=5\cdot 3^i-1$ with $i\ge2;$

\item[(xi)] 
$K_n\equiv15$~{\em (mod~$27$)} if, and only if, 
$n=\frac {1} {2}(7\cdot 3^{i}-1)$ with $i\ge2$,
or
$n=\frac {1} {2}(3^{i}+5)$ with $i\ge2$,
or
$n=2\cdot 3^i$ with $i\ge1$,
or
$n=11\cdot 3^i-1$ with $i\ge1$,
or
$n=\frac {1} {2}(2\cdot3^{i_1}+3^{i_2}+3^ {i_3}-2)$ with $i_1-2>i_2-1>i_3\ge1$,
or
$n=\frac {1} {2}(3^{i_1}+2\cdot3^{i_2}+3^ {i_3}-2)$ with $i_1-1>i_2-1>i_3\ge1$,
or
$n=\frac {1} {2}(3^{i_1}+3^{i_2}+2\cdot3^ {i_3}-2)$ with $i_1-1>i_2>i_3\ge1;$

\item[(xii)] 
$K_n\equiv16$~{\em (mod~$27$)} if, and only if, $n=2;$

\item[(xiii)] 
$K_n\equiv17$~{\em (mod~$27$)} if, and only if, $n=5;$

\item[(xiv)] 
$K_n\equiv18$~{\em (mod~$27$)} if, and only if, 
$n=\frac {1} {2}(2\cdot3^{i_1}+2\cdot3^{i_2}+3^{i_3}+3^{i_4}-2)$ 
with $i_1>i_2>i_3>i_4\ge1$,
or
$n=\frac {1} {2}(2\cdot3^{i_1}+3^{i_2}+2\cdot3^{i_3}+3^{i_4}-2)$ 
with $i_1>i_2>i_3>i_4\ge1$,
or
$n=\frac {1} {2}(2\cdot3^{i_1}+3^{i_2}+3^{i_3}+2\cdot3^{i_4}-2)$ 
with $i_1>i_2>i_3>i_4\ge1$,
or
$n=\frac {1} {2}(3^{i_1}+2\cdot3^{i_2}+2\cdot3^{i_3}+3^{i_4}-2)$ 
with $i_1>i_2>i_3>i_4\ge1$,
or
$n=\frac {1} {2}(3^{i_1}+2\cdot3^{i_2}+3^{i_3}+2\cdot3^{i_4}-2)$ 
with $i_1>i_2>i_3>i_4\ge1$,
or
$n=\frac {1} {2}(3^{i_1}+3^{i_2}+2\cdot3^{i_3}+2\cdot3^{i_4}-2)$ 
with $i_1>i_2>i_3>i_4\ge1$,
or
$n=\frac {1} {2}(3^{i_1}+3^{i_2}+3^{i_3}+3^{i_4}+3^{i_5}+3^{i_6}-2)$ 
with $i_1>i_2>i_3>i_4>i_5>i_6\ge1;$

\item[(xv)] 
$K_n\equiv21$~{\em (mod~$27$)} if, and only if, 
$n=19$, 
or
$n=3^{i_1}+3^{i_2}-1$ with $i_1>i_2\ge1$,
or
$n=\frac {1} {2}(3^{i_1}+3^{i_2}+3^{i_3}-1)$ with $i_1-2>i_2-1>i_3\ge2$,
or
$n=\frac {1} {2}(4\cdot3^{i_1}+3^{i_2}+3^{i_3}-2)$ with $i_1-2>i_2-1>i_3\ge1$,
or
$n=\frac {1} {2}(3^{i_1}+4\cdot3^{i_2}+3^{i_3}-2)$ with $i_1-3>i_2-1>i_3\ge1$,
or
$n=\frac {1} {2}(3^{i_1}+3^{i_2}+4\cdot3^{i_3}-2)$ with $i_1-3>i_2-2>i_3\ge1;$

\item[(xvi)] 
$K_n\equiv23$~{\em (mod~$27$)} if, and only if, $n=14;$

\item[(xvii)] 
$K_n\equiv24$~{\em (mod~$27$)} if, and only if, 
$n=10$, or
$n=\frac {1} {2}(2\cdot 3^{i_1}+3^{i_2}-1)$ with $i_1-1>i_2\ge2$,
or
$n=\frac {1} {2}(3^{i_1}+2\cdot 3^{i_2}-1)$ with $i_1>i_2\ge2$,
or
$n=3^{i_1}+2\cdot3^{i_2}-1$ with $i_1-2>i_2\ge1$,
or
$n=2\cdot3^{i_1}+3^{i_2}-1$ with $i_1>i_2\ge1;$

\item[(xviii)] 
$K_n\equiv26$~{\em (mod~$27$)} if, and only if, 
$n=\frac {1} {2}(3^i-1)$ with $i\ge3;$
\item[(xix)] in the cases not covered by items {\em(i)}--{\em(xviii),}
$K_n$ is divisible by $27;$
in particular, $K_n \not\equiv 4,10,11,13,19,20,22,25$~{\em(mod~$27$)} 
for all $n$.
\end{enumerate}

\end{corollary}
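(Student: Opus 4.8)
The plan is to follow the route used in the proof of Corollary~\ref{cor:N27}, the only genuine complication being the presence of the variable~$z^{1/2}$. Writing $w=z^{1/2}$, I would first take the closed form for $K(z)$ modulo~$27$ furnished by Theorem~\ref{thm:K27} and expand each power $\Phi^i(w)$ by means of the brute-force formula~\eqref{eq:Phipot}, obtaining a $\Z$-linear combination of (generally reducible) series $H_{b_1,\dots,b_s}(w)$. I would then apply the reduction algorithm of Proposition~\ref{conj:H}, built on Hou's identities~\eqref{eq:Rek}, to rewrite every such series in terms of the series $H_{a_1,\dots,a_r}(w)$ whose indices are all relatively prime to~$3$, together with the series~$1$. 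After collecting terms this produces an expression
\[
K(z)=q_0(w)+\sum_i q_i(w)\,H_{a_1^{(i)},\dots,a_{r_i}^{(i)}}(w)\quad\text{modulo }27,
\]
in which the $q_i(w)$ are Laurent polynomials in~$w$ and all indices $a_j^{(i)}$ are coprime to~$3$; by Corollary~\ref{lem:Hind} this representation is unambiguous.

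The second step is coefficient extraction. Since $K(z)=\sum_{n\ge0}K_n\,z^n$, the number $K_n$ is the coefficient of $w^{2n}$ in the displayed expansion. For each term $c\,w^{e}H_{a_1,\dots,a_r}(w)$ the monomial $w^{2n}$ occurs if and only if $2n-e$ admits a representation $a_13^{n_1}+\dots+a_r3^{n_r}$ with $n_1>\dots>n_r\ge0$, and by Remark~\ref{rem:eff} such a representation, when it exists, is unique and can be detected by the greedy procedure described there. Thus for a given~$n$ I would run that procedure on each of the finitely many shifted exponents $2n-e$, where $e$ runs through the exponents occurring in the prefactors $q_i$, record which series are hit, and sum the corresponding coefficients modulo~$27$.

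The main obstacle, as in Corollary~\ref{cor:N27}, is not any single step but the exhaustive and mutually exclusive case analysis that converts this extraction rule into the explicit list~(i)--(xix). Because the prefactors carry both integral and half-integral powers of~$z$, the parity of the shift $2n-e$ varies: terms with $e$ even force $\sum a_j$ even and terms with $e$ odd force $\sum a_j$ odd, so that the relevant representations are governed by the $3$-ary digit patterns of $2n$ and of its odd neighbours. This is precisely why the final classification is phrased through quantities such as $3^i\pm1$, $\tfrac12(3^i+1)$, and $\tfrac12(3^{i_1}+3^{i_2}-2)$, the coefficients $2,4,7,13$ encoding the digit blocks $(2)_3$, $(11)_3$, $(21)_3$, $(111)_3$. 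I would organise the analysis by the number and positions of the nonzero $3$-ary digits of the relevant shifted exponent, matching each pattern against the finite list of index tuples $(a_1,\dots,a_r)$ that actually appear. The delicate points will be, first, translating the strict inequalities $n_1>\dots>n_r\ge0$ together with the coprimality of the $a_j$ into the gap conditions on the exponents $i_1,i_2,\dots$ seen in the statement, and second, verifying completeness, namely that every~$n$ falls into exactly one listed case and that the residues $4,10,11,13,19,20,22,25$ are genuinely unattainable. Once the $H$-expansion is available, each individual case is a routine application of Remark~\ref{rem:eff}, so the argument requires careful bookkeeping rather than new ideas.
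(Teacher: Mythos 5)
Your proposal follows exactly the paper's own route: expand the right-hand side of Theorem~\ref{thm:K27} via Equation~\eqref{eq:Phipot} and Proposition~\ref{conj:H} into a $\Z$-linear combination of series $H_{a_1,\dots,a_r}\!\left(z^{1/2}\right)$ with all indices coprime to~$3$, and then extract coefficients of $z^n=w^{2n}$ using the greedy procedure of Remark~\ref{rem:eff}, with the final list obtained by bookkeeping over the $3$-ary digit patterns of the shifted exponents. This is correct and is essentially the same proof as in the paper, which carries it out by displaying the explicit $H$-expansion and invoking Remark~\ref{rem:eff}.
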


\begin{proof}
By means of Equation~\eqref{eq:Phipot} and 
Proposition~\ref{conj:H},
we convert the right-hand side of \eqref{eq:LoesK1} into a linear
combination of series $H_{a_1,a_2,\dots,a_r}\!\left(z^{1/2}\right)$ 
with all $a_i$'s
relatively prime to~$p$. The result is
\begin{multline*} \label{}
\sum _{n\ge0} ^{}K_n\,z^n
=
21 z^{-1/2}H_{1}(z^{1/2}) 
+ 7z^{-1} H_{2}(z^{1/2}) 
+ 5z^{-1} H_{1, 1}(z^{1/2}) \\
+ 9z^{-1/2} H_{2, 1}(z^{1/2}) 
+ 9z^{-1/2} H_{1, 2}(z^{1/2}) 
+ 18z^{-1/2} H_{1, 1, 1}(z^{1/2}) \\
+ 3z^{-1} H_{4}(z^{1/2}) 
+ 21z^{-1} H_{2, 2}(z^{1/2}) 
+ 15z^{-1} H_{2, 1, 1}(z^{1/2})\\ 
+ 15z^{-1} H_{1, 2, 1}(z^{1/2}) 
+ 15z^{-1} H_{1, 1, 2}(z^{1/2}) 
+ 3z^{-1} H_{1, 1, 1, 1}(z^{1/2})\\ 
+ 9z^{-1} H_{4, 2}(z^{1/2}) 
+ 9z^{-1} H_{2, 4}(z^{1/2}) 
+ 18z^{-1} H_{4, 1, 1}(z^{1/2}) 
+ 18z^{-1} H_{1, 4, 1}(z^{1/2}) \\
+ 18z^{-1} H_{1, 1, 4}(z^{1/2}) 
+ 9z^{-1} H_{2, 2, 2}(z^{1/2}) 
+ 18z^{-1} H_{2, 2, 1, 1}(z^{1/2})\\ 
+ 18z^{-1} H_{2, 1, 2, 1}(z^{1/2}) 
+ 18z^{-1} H_{2, 1, 1, 2}(z^{1/2}) 
+ 18z^{-1} H_{1, 2, 2, 1}(z^{1/2}) \\
+ 18z^{-1} H_{1, 2, 1, 2}(z^{1/2}) 
+ 18z^{-1} H_{1, 1, 2, 2}(z^{1/2}) 
+ 9z^{-1} H_{2, 1, 1, 1, 1}(z^{1/2}) \\
+ 9z^{-1} H_{1, 2, 1, 1, 1}(z^{1/2}) 
+ 9z^{-1} H_{1, 1, 2, 1, 1}(z^{1/2}) 
+ 9z^{-1} H_{1, 1, 1, 2, 1}(z^{1/2}) \\
+ 9z^{-1} H_{1, 1, 1, 1, 2}(z^{1/2}) 
+ 18z^{-1} H_{1, 1, 1, 1, 1, 1}(z^{1/2})
\quad \quad 
\text {modulo }27.
\end{multline*}
Coefficient extraction following the algorithm described in
Remark~\ref{rem:eff} then yields the claimed congruences.
\end{proof}

If we restrict Corollary~\ref{cor:K} to modulus~$3$, then it reduces
to the following simple assertion.

\begin{corollary} \label{cor:K3}
The number $K_n$ of Kreweras walks of length $3n$ is congruent
to $1$ modulo~$3$ if, and only if, $n=3^i-1$ with $i\ge1$,
it is congruent 
to $2$ modulo~$3$ if, and only if, $n=\frac {1}
{2}(3^{i_1}+3^{i_2}-2)$ with $i_1>i_2\ge0$, and it is divisible
by~$3$ in all other cases.
\end{corollary}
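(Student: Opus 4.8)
The plan is to bypass the full modulus-$27$ information of Corollary~\ref{cor:K} and instead extract the modulus-$3$ behaviour directly from the simplest instance of our representation. Taking $\al=0$ in the proof of Theorem~\ref{thm:K}, we have $s_0(z)=0$, so the base solution collapses to $F_1(z)=z^{-1}\Phi^2(z^{1/2})$; equivalently, one may reduce the explicit formula \eqref{eq:LoesK1} modulo~$3$ and simplify using the elementary congruence $\Phi^3(w)\equiv\Phi(w)-w$ modulo~$3$, which turns the surviving terms $1+2z^{-1/2}\Phi^3(z^{1/2})+z^{-1}\Phi^6(z^{1/2})$ into $z^{-1}\Phi^2(z^{1/2})$. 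Either way, the outcome is the compact identity
\[
\sum_{n\ge0}K_n\,z^n\equiv z^{-1}\Phi^2(z^{1/2})\quad\text{modulo }3 .
\]

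Next I would expand $\Phi^2(z^{1/2})$ by means of \eqref{eq:Phipot}: since the only compositions of $2$ are $(2)$ and $(1,1)$, this gives $\Phi^2(w)=H_2(w)+2H_{1,1}(w)$, and hence
\[
\sum_{n\ge0}K_n\,z^n\equiv z^{-1}H_2(z^{1/2})+2z^{-1}H_{1,1}(z^{1/2})\quad\text{modulo }3 .
\]
Both indices $2$ and $(1,1)$ consist of entries relatively prime to~$3$, so Corollary~\ref{lem:Hind} guarantees these are honest, independent contributions, and coefficient extraction proceeds by the linear-time procedure of Remark~\ref{rem:eff}. Concretely, $z^{-1}H_2(z^{1/2})=\sum_{n\ge0}z^{3^n-1}$ contributes a monomial $z^n$ exactly when $n=3^i-1$, while $z^{-1}H_{1,1}(z^{1/2})=\sum_{n_1>n_2\ge0}z^{(3^{n_1}+3^{n_2}-2)/2}$ contributes exactly when $n=\tfrac12(3^{i_1}+3^{i_2}-2)$ for some $i_1>i_2\ge0$ (the relevant exponents being integers since sums of two powers of~$3$ are even).

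The final step is to read off the residue. The key combinatorial fact is the uniqueness of base-$3$ representations: an even integer $m=2n+2$ can be written as $3^a+3^b$ in at most one ``shape,'' namely either $m=2\cdot3^i$ (the diagonal term recorded by $H_2$) or $m=3^{i_1}+3^{i_2}$ with $i_1\ne i_2$ (recorded by $H_{1,1}$), and these two shapes are mutually exclusive because the first has a single base-$3$ digit equal to~$2$ while the second has two digits equal to~$1$. Hence for each $n$ at most one of the two series contributes, yielding $K_n\equiv1$ when $n=3^i-1$, $K_n\equiv2$ when $n=\tfrac12(3^{i_1}+3^{i_2}-2)$, and $K_n\equiv0$ otherwise. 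I do not anticipate a serious obstacle here, since all the substantive work is carried by Theorems~\ref{thm:K} and~\ref{thm:K27}; the only point requiring genuine care is the boundary case $i=0$ in the first family, which yields $n=3^0-1=0$ and the value $K_0=1$, so that the first family is most accurately stated with $i\ge0$, the case $n=0$ being checked directly against $K_0=1$.
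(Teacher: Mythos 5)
Your proposal is correct, and it takes a genuinely different --- and more economical --- route than the paper. The paper obtains this corollary as a one-line restriction of the full modulus-$27$ result (Corollary~\ref{cor:K}), which in turn rests on Theorem~\ref{thm:K27} and a lengthy expansion of \eqref{eq:LoesK1} into $H$-series. You instead work entirely modulo~$3$: the $\al=0$ base step of the proof of Theorem~\ref{thm:K} (where $s_0(z)=0$), combined with the uniqueness of the power series solution of \eqref{eq:Kdiff} modulo~$3$, yields the compact identity $K(z)\equiv z^{-1}\Phi^2(z^{1/2})$ modulo~$3$ directly; your alternative derivation by reducing \eqref{eq:LoesK1} modulo~$3$ via $\Phi^3(w)\equiv\Phi(w)-w$ also checks out, the cross terms cancelling exactly as you say. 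From there, the expansion $\Phi^2(w)=H_2(w)+2H_{1,1}(w)$ from \eqref{eq:Phipot}, coefficient extraction as in Remark~\ref{rem:eff}, and uniqueness of ternary representations (the digit patterns $\{2\}$ versus $\{1,1\}$ of $2n+2$ are mutually exclusive) finish the argument. What your route buys is independence from all the modulus-$27$ bookkeeping; what the paper's route buys is that, with Corollary~\ref{cor:K} already established, no further argument is needed at all. Your boundary remark is also a genuine catch rather than mere pedantry: since the paper sets $K_0=1$, the first family should indeed read $n=3^i-1$ with $i\ge0$ (equivalently, the printed statement is accurate only if one restricts to $n\ge1$); the same reading is forced by item~(i) of Corollary~\ref{cor:K}, which records $K_0\equiv1$ modulo~$27$.
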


\section{Fu\ss--Catalan numbers modulo $p$-powers}
\label{sec:FCat}

In this section, given a positive integer~$h$ and a prime number~$p$, 
we determine the behaviour of the
{\it Fu\ss--Catalan numbers}
\begin{equation} \label{eq:FCat} 
F(n;k):=\frac {1} {n}\binom {kn}{n-1}
\end{equation}
modulo powers of~$p$, provided $k$ is itself a power 
of~$p$, say, $k = p^h$.\footnote{\label{F:Lucas}%
In principle, one could use the
generalisations of Lucas' theorem due to Davis and Webb \cite{DaWeAA}, and to
Granville \cite{GranAA}, respectively, to analyse the explicit
expression for the Fu\ss--Catalan numbers 
modulo a given $p$-power. However, this approach would be
rather cumbersome in comparison with our method, and it is doubtful
that one would be able to derive a result on the same level of 
generality as Theorems~\ref{thm:FCat}, \ref{thm:FCat27}, or
Corollaries~\ref{cor:FCat27}, \ref{cor:F25}.}
These numbers have
numerous combinatorial interpretations; cf.\ \cite[pp.~59--60]{ArmDAA}.

By using the Lagrange inversion formula (see
\cite[Theorem~5.4.2]{StanBI}), it is easy to see that
the generating function $f_{p;h}(z)=1+\sum_{n\ge1}\frac {1} {n}\binom
{p^hn}{n-1}z^n$ satisfies the functional equation
\begin{equation} \label{eq:FCatEq}
zf_{p;h}^{p^h}(z)-f_{p;h}(z)+1=0.
\end{equation}
It is straightforward to verify that this
equation has a unique formal power series solution over $\Z$,
and as well over any power of~$p$. 

In order to determine the coefficients of $f_{p;h}(z)$ 
modulo powers of $p$, we have to use a variant of the series
$\Phi(z)$ defined in \eqref{eq:Phidef}, namely
\begin{equation} \label{eq:Phiphdef}
\Phi_{p;h}(z)=\sum _{n\ge0} ^{}z^{p^{nh}/(p^h-1)}.
\end{equation}
The theorem below generalises Theorem~33 in \cite{KaKMAA} to arbitrary
prime numbers.

\begin{theorem} \label{thm:FCat}
For a prime number $p$ and a positive integer $h,$ 
let $\Phi_{p;h}(z)$ be the series defined in \eqref{eq:Phiphdef}$,$ and
let $\al$ be a further positive integer.
Then the generating function $f_{p;h}(z),$ when reduced modulo $p^{p^\al},$ 
can be expressed as a polynomial in $\Phi_{p;h}(z)$ 
of degree at most
$p^{(\al+1)h}-1,$ with coefficients that are Laurent polynomials in 
$z^{1/(p^h-1)}$ over the integers.
\end{theorem}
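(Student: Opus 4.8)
The plan is to apply the method of Section~\ref{sec:method}, using the variant series $\Phi=\Phi_{p;h}(z)$ from \eqref{eq:Phiphdef} in place of $\Phi_p(z)$. Throughout I would write $q=p^h$ and $w=z^{1/(p^h-1)}$, so that $z=w^{q-1}$ and $\Phi=\sum_{n\ge0}w^{q^n}$. The only algebraic input I need about $\Phi$ is that raising to the $q$-th power is a $p$-power Frobenius, whence $\Phi^q\equiv\Phi-w\pmod p$; iterating gives $\Phi^{q^j}\equiv\Phi-S_j\pmod p$, where $S_j=\sum_{i=0}^{j-1}w^{q^i}$ is the analogue of the polynomial $s_\al(z)$ of Section~\ref{sec:noncr}, together with the companion identity $S_\al^q-S_\al\equiv w^{q^\al}-w\pmod p$ (the analogue of \eqref{eq:sal}), which is immediate from the definition of $S_\al$. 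Since $\Phi^q-\Phi+w$ is divisible by $p$, its $q^\al$-th power obeys $(\Phi^q-\Phi+w)^{q^\al}\equiv0\pmod{p^{q^\al}}$; because $q^\al=p^{h\al}\ge p^\al$ this relation is in particular valid modulo $p^{p^\al}$, and modulo $p$ it reads $\Phi^{q^{\al+1}}\equiv\Phi^{q^\al}-w^{q^\al}$. This is the reduction I would use to keep every power of $\Phi$ below $q^{\al+1}=p^{(\al+1)h}$, which is exactly the claimed degree bound.

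For the base step I would substitute the Ansatz \eqref{eq:Ansatz1} (with $\Phi_{p;h}$ in place of $\Phi$) into \eqref{eq:FCatEq}, reduce modulo $p$, and claim that
\[
F_1(z)=w^{-1}\Phi^{q^\al}+w^{-1}S_\al
\]
solves $zF^q-F+1\equiv0\pmod p$. To verify this I would compute $F_1^q\equiv w^{-q}\Phi^{q^{\al+1}}+w^{-q}S_\al^q$ modulo $p$, reduce $\Phi^{q^{\al+1}}$ by the relation above, multiply by $z=w^{q-1}$, and subtract $F_1$: the two copies of $w^{-1}\Phi^{q^\al}$ cancel identically, and the remaining purely $w$-dependent part collapses to $0$ on account of $S_\al^q-S_\al\equiv w^{q^\al}-w$. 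This is the one genuine calculation, entirely parallel to the base-step verifications in the proofs of Theorems~\ref{thm:N} and \ref{thm:K}.

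The iterative step is where the Fu\ss--Catalan equation is markedly simpler than \eqref{eq:fmdiff} or \eqref{eq:Kdiff}, and this is the point I would emphasise. Writing $F_{\be+1}=F_\be+p^\be B$ with $B=\sum_i b_{i,\be+1}\Phi^i$ and substituting into \eqref{eq:FCatEq}, the only nonlinear term is $zF^q$, and there is no term of intermediate degree playing the role of the $F^2$ in \eqref{eq:fmdiff} that produced a nontrivial matrix $M$ there. Moreover, since $q=p^h$, one has $(F_\be+p^\be B)^q\equiv F_\be^q\pmod{p^{\be+1}}$, because the binomial correction carries the factor $q\,p^\be=p^{h+\be}$, divisible by $p^{\be+1}$. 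Hence the entire contribution of $B$ comes from the linear term $-F$, so the ``matrix'' $M$ of Section~\ref{sec:method} is (up to sign) the identity and the system is trivially solvable: the update is the explicit Picard step $F_{\be+1}:=zF_\be^q+1$, reduced to $\Phi$-degree below $q^{\al+1}$. Its correctness is checked directly: if $zF_\be^q-F_\be+1\equiv0\pmod{p^\be}$, then $F_{\be+1}-F_\be\equiv0\pmod{p^\be}$, so $F_{\be+1}^q\equiv F_\be^q\pmod{p^{\be+1}}$, and therefore $zF_{\be+1}^q-F_{\be+1}+1\equiv z(F_{\be+1}^q-F_\be^q)\equiv0\pmod{p^{\be+1}}$. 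Repeating this until the modulus reaches $p^{p^\al}$ produces a polynomial in $\Phi$ of degree at most $q^{\al+1}-1=p^{(\al+1)h}-1$ with coefficients that are Laurent polynomials in $w=z^{1/(p^h-1)}$, representing $f_{p;h}(z)$ modulo $p^{p^\al}$.

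Two points deserve care. First, because the iteration is explicit there is no linear system whose solvability could fail, so the obstruction flagged in Remark~\ref{rem:comp} does not arise here; the only thing to check at each stage is that reducing $F_\be^q$ by $(\Phi^q-\Phi+w)^{q^\al}\equiv0$ keeps the $\Phi$-degree below $q^{\al+1}$ and the coefficients Laurent polynomials in $w$, which is immediate since the leading term of that relation is $\Phi^{q^{\al+1}}$ with coefficient $1$. Second, the real content, and the main thing I expect to have to get right, is the bookkeeping between the two bases $p$ and $q=p^h$: the modulus grows in powers of $p$ (one factor per Picard step), while the $\Phi$-arithmetic and the degree bound are governed by $q$. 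Checking that the relation $(\Phi^q-\Phi+w)^{q^\al}\equiv0$ is simultaneously strong enough to be valid modulo $p^{p^\al}$ and to reduce exactly at the threshold $\Phi^{q^{\al+1}}$ is the structural subtlety; it is this that forces the exponent $q^\al$ rather than $p^\al$, since the base solution $F_1$ already has $\Phi$-degree $q^\al$, which exceeds $p^{h+\al}$ as soon as $h,\al\ge2$.
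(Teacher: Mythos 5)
Your proposal is correct and takes essentially the same route as the paper's own proof: the same change of variable $w=z^{1/(p^h-1)}$, the same base solution $w^{-1}\bigl(\Phi^{q^\al}+S_\al\bigr)$ (this is exactly the choice \eqref{eq:Ansatza1}), and the same key observation that raising to the power $q=p^h$ kills the correction term modulo $p^{\be+1}$, so that the iterative step is trivially solvable --- your explicit Picard update $F_{\be+1}:=zF_\be^q+1$ is precisely the solution of the paper's system \eqref{eq:bi+1}. The only (harmless) difference is that the paper runs the iteration up to the modulus $p^{p^{\al h}}$, which is slightly stronger than the stated $p^{p^\al}$ that you establish.
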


\begin{proof} 
For ease of notation, we replace $z$ by $z^{p^h-1}$ in
\eqref{eq:FCatEq}, thereby obtaining the equation
\begin{equation} \label{eq:pmeqA}
z^{p^h-1}\tilde f_{p;h}^{p^h}(z)-\tilde f_{p;h}(z)+1=0,
\end{equation}
with $\tilde f_{p;h}(z)=f_{p;h}(z^{p^h-1})$. We now have
to prove that, modulo $p^{p^{\al h}}$, the series $\tilde
f_{p;h}(z)$ can be expressed as a polynomial in
\begin{equation} \label{eq:Phimdef} 
\tilde \Phi_{p;h}(z)=\sum _{n=0} ^{\infty}z^{p^{nh}}
\end{equation}
of degree at most $p^{(\al+1)h}-1$, with coefficients that are Laurent 
polynomials in $z$.

It is readily verified that
\begin{equation} \label{eq:Phim2}
\tilde\Phi_{p;h}^{p^h}(z) -\tilde\Phi_{p;h}(z)+z=0\quad \text{modulo }p,
\end{equation}
whence
\begin{equation} \label{eq:Phim2al}
\left(\tilde\Phi_{p;h}^{p^h}(z) -\tilde\Phi_{p;h}(z)+z\right)^{p^{\al h}}=0\quad 
\text{modulo }p^{p^{\al h}}.
\end{equation}
We modify our Ansatz \eqref{eq:Ansatz} to
\begin{equation} \label{eq:Ansatzm}
\tilde f_{p;h}(z)=\sum _{i=0} ^{p^{(\al+1)h}-1}a_i(z)\tilde\Phi_{p;h}^i(z)\quad \text 
{modulo
}p^{p^{\al h}},
\end{equation}
where the $a_i(z)$'s
are (at this point) undetermined Laurent polynomials in $z$.

Next, we gradually find approximations $a_{i,\be}(z)$ to $a_i(z)$ such that
\eqref{eq:pmeqA} holds modulo $p^\be$, for $\be=1,2,\dots,
p^{\al h}$. To start the procedure, we consider the functional equation
\eqref{eq:pmeqA} modulo $p$, with
\begin{equation} \label{eq:AnsatzA1}
\tilde f_{p;h}(z)=\sum _{i=0} ^{p^{(\al+1)h}-1}a_{i,1}(z)\tilde\Phi_{p;h}^i(z)\quad \text {modulo
}p.
\end{equation}
It is readily verified that the choice of
\begin{align} \notag
a_{0,1}(z)&=
\sum _{k=0} ^{{\al}-1}z^{p^{kh}-1},\\[2mm]
\label{eq:Ansatza1}
a_{p^{\al h},1}(z)&=z^{-1},
\end{align}
with all other $a_{i,1}(z)$'s equal to zero indeed leads to a
solution of \eqref{eq:pmeqA} modulo~$p$.
 
\medskip
After we have completed the ``base  step," we now proceed with the
iterative steps described in Section~\ref{sec:method}. Our Ansatz
here (replacing the corresponding one in
\eqref{eq:Ansatz2}--\eqref{eq:Ansatz2b}) is
\begin{equation} \label{eq:AnsatzA2}
\tilde f_{p;h}(z)
=\sum _{i=0} ^{p^{(\al+1)h}-1}a_{i,\be+1}(z)\tilde\Phi_{p;h}^i(z)\quad 
\text{modulo }p^{\be+1},
\end{equation}
with
\begin{equation} \label{eq:AnsatzA2a}
a_{i,\be+1}(z):=a_{i,\be}(z)+p^{\be}b_{i,\be+1}(z),\quad 
i=0,1,\dots,p^{(\al+1)h}-1,
\end{equation}
where the
coefficients $a_{i,\be}(z)$ are supposed to provide a solution
$$\tilde f_{\be}(z)=\sum _{i=0}
^{p^{(\al+1)h}-1}a_{i,\be}(z)\tilde\Phi_{p;h}^i(z)$$ 
to
\eqref{eq:pmeqA} modulo~$p^\be$. This Ansatz, substituted in
\eqref{eq:pmeqA}, produces the congruence
\begin{equation} \label{eq:iter}
z^{p^h-1}\tilde f_{\be}^{p^h}(z)-\tilde f_{\be}(z)
+p^\be\sum _{i=0} ^{p^{(\al+1)h}-1}b_{i,\be+1}(z)\tilde\Phi_{p;h}^i(z)
+1=0
\quad 
\text {modulo }p^{\be+1}.
\end{equation}
By our assumption on $\tilde f_{\be}(z)$, we may divide by $p^\be$.
Comparison of powers of $\tilde\Phi_{p;h}(z)$ then yields a system of congruences
of the form
\begin{equation} \label{eq:bi+1} 
b_{i,\be+1}(z)+\text {Pol}_i(z)=0\quad 
\text {modulo }p,\quad \quad 
i=0,1,\dots,p^{(\al+1)h}-1,
\end{equation}
where $\text {Pol}_i(z)$, $i=0,1,\dots,p^{(\al+1)h}-1$, are certain
Laurent polynomials with integer coefficients. This system being trivially
uniquely solvable, we have proved that, for an arbitrary positive
integer $\al$, the modified algorithm that we have presented here
will produce a solution $\tilde f_{{
p^{\al h}}}(z)$ to \eqref{eq:pmeqA} modulo $p^{p^{\al h}}$ which is a
polynomial in $\tilde\Phi_{p;h}(z)$ with coefficients that are Laurent
polynomials in $z$.
\end{proof}

It should be observed that the $\al=0$ case of the above proof
(see in particular \eqref{eq:Ansatza1})
shows that $F(n;p^h)\equiv1$~(mod~$p$) for $n=(p^{hi}-1)/(p-1)$,
$i=0,1,\dots$, and $F(n;p^h)\equiv0$~(mod~$p$) otherwise.

We have implemented the algorithm contained in the above proof. 
As an illustration, we display
below the result obtained for $h=1$ and the modulus $p^2$. This result
was first guessed from the automatically obtained results for
$p=3,5,7$, but, once found, it is easily verified directly by substitution
in \eqref{eq:FCatEq}.

\begin{theorem} \label{thm:FCat27}
Let $\Phi(z)=\sum _{n\ge0} ^{}z^{p^n}$.
Then we have
\begin{multline} 
\label{eq:Loes1FCat}
\sum _{n\ge1} ^{}\frac {1} {n}\binom {pn}{n-1}\,z^n
=
p\Phi^{p-1}\!\left(z^{1/(p-1)}\right)
-(p-1)z^{-1/(p-1)}\Phi^p\!\left(z^{1/(p-1)}\right)\\
+pz^{-1/(p-1)}\Phi^{2p-1}\!\left(z^{1/(p-1)}\right)
\quad \quad 
\text {\em modulo }p^2.
\end{multline}
\end{theorem}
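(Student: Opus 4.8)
The plan is to verify the claimed expression directly by substitution into the functional equation \eqref{eq:FCatEq}, as the authors announce, using the uniqueness of the power-series solution modulo $p^2$. First I would pass to the variable $w=z^{1/(p-1)}$, so that $z=w^{p-1}$ and $z^{-1/(p-1)}=w^{-1}$; writing $\tilde f(w)=f_{p;1}(w^{p-1})$ and $\Phi=\Phi(w)=\sum_{n\ge0}w^{p^n}$, equation \eqref{eq:FCatEq} becomes the $h=1$ case of \eqref{eq:pmeqA}, namely $w^{p-1}\tilde f^p-\tilde f+1=0$, and the asserted identity reads $\tilde f\equiv 1+G\pmod{p^2}$ with
$$
G:=p\Phi^{p-1}-(p-1)w^{-1}\Phi^p+pw^{-1}\Phi^{2p-1}.
$$
Since \eqref{eq:FCatEq} has a unique power-series solution with constant term $1$ over $\Z/p^2\Z$, it suffices to check that $1+G$ is such a power series and that it satisfies $w^{p-1}(1+G)^p-(1+G)+1\equiv0\pmod{p^2}$.

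The two ingredients I would isolate first are both elementary. The first is the reduction $\Phi^p\equiv\Phi-w\pmod p$ coming from \eqref{eq:Phim2} (equivalently the exact identity $\Phi(w^p)=\Phi(w)-w$). The second is the congruence $(a+pb)^p\equiv a^p\pmod{p^2}$, valid for every prime $p$ (the $j=1$ binomial term contributes $p\cdot pb\cdot a^{p-1}=p^2a^{p-1}b$, and each other non-leading term carries a factor $p^2$); this is what makes the $p$-th power collapse. Using $-(p-1)=1-p$ I would rewrite
$$
G=w^{-1}\Phi^p+p\bigl(\Phi^{p-1}-w^{-1}\Phi^p+w^{-1}\Phi^{2p-1}\bigr),
$$
and then show the bracketed factor vanishes modulo $p$: pulling out $\Phi^{p-1}$ leaves $1-w^{-1}\Phi+w^{-1}\Phi^p$, which is $\equiv0\pmod p$ by $\Phi^p\equiv\Phi-w$. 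Hence $G\equiv w^{-1}\Phi^p\pmod{p^2}$, and in particular $1+G\equiv w^{-1}\Phi\pmod p$ (again using $\Phi^p\equiv\Phi-w$).

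With these in hand the verification is short. Because $1+G\equiv w^{-1}\Phi\pmod p$ I may write $1+G=w^{-1}\Phi+p\,U$, where $U$ is a $\Z[w,w^{-1}]$-linear combination of powers of $\Phi$; integrality of $U$ is guaranteed by the exact relation $\Phi^p-\Phi+w=pT$ from \eqref{eq:PhiGl}, the series $T$ having integer coefficients. The collapse lemma then yields $(1+G)^p\equiv(w^{-1}\Phi)^p=w^{-p}\Phi^p\pmod{p^2}$, so that $w^{p-1}(1+G)^p\equiv w^{-1}\Phi^p\pmod{p^2}$. Substituting,
$$
w^{p-1}(1+G)^p-(1+G)+1\equiv w^{-1}\Phi^p-G\equiv0\pmod{p^2},
$$
the last step being exactly $G\equiv w^{-1}\Phi^p$. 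Finally I would confirm the power-series property: every exponent occurring in $\Phi^p$ is a sum of $p$ powers of $p$, hence $\equiv p\equiv1\pmod{p-1}$, so $w^{-1}\Phi^p$ — and therefore $1+G$ modulo $p^2$ — involves only non-negative integer powers of $z=w^{p-1}$, with constant term $1$. Uniqueness then identifies it with $f_{p;1}$ modulo $p^2$.

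I do not expect a genuine obstacle here: the point is that raising to the $p$-th power annihilates the order-$p$ correction term modulo $p^2$, so only the mod-$p$ shape $w^{-1}\Phi$ of the solution matters. The only items needing care are the bookkeeping with the fractional substitution $w=z^{1/(p-1)}$ and the check that the Laurent tails cancel to leave an honest power series in $z$; both are routine, and the argument is uniform in $p$ (including $p=2$, where $(a+2b)^2=a^2+4ab+4b^2\equiv a^2\pmod 4$). As a side remark, the computation above shows that $G\equiv w^{-1}\Phi^p\pmod{p^2}$, so the three-term expression in the statement is in fact congruent modulo $p^2$ to the simpler $1+z^{-1/(p-1)}\Phi^p\!\left(z^{1/(p-1)}\right)$; the longer form is merely the shape produced by the algorithm of Section~\ref{sec:method}.
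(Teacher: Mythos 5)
Your proof is correct and is essentially the paper's own approach: the paper's proof of Theorem~\ref{thm:FCat27} consists precisely of the remark that, by uniqueness of the power-series solution of \eqref{eq:FCatEq} modulo $p^2$, the formula ``is easily verified directly by substitution in \eqref{eq:FCatEq}'', and you have carried out exactly that substitution (including the needed check that the expression is an honest power series in $z$). Your intermediate simplification, that the right-hand side is congruent to $z^{-1/(p-1)}\Phi^p\!\left(z^{1/(p-1)}\right)$ modulo $p^2$, is precisely the paper's Corollary~\ref{cor:FCat27}, obtained there by the same use of the relation \eqref{eq:PhiRel} with $\al=0$.
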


(Human) inspection reveals that this expression can actually be
drastically simplified.

{
\begin{corollary} \label{cor:FCat27}
Let $\Phi(z)=\sum _{n\ge0} ^{}z^{p^n}$.
Then we have
\begin{equation} 
\label{eq:Loes1FCata}
\sum _{n\ge1} ^{}\frac {1} {n}\binom {pn}{n-1}\,z^n
=
z^{-1/(p-1)}\Phi^p\!\left(z^{1/(p-1)}\right)
\quad \quad 
\text {\em modulo }p^2.
\end{equation}
\end{corollary}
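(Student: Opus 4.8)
The plan is to deduce the Corollary directly from Theorem~\ref{thm:FCat27}, by showing that its right-hand side simplifies to the claimed single-term expression modulo~$p^2$. Writing $w=z^{1/(p-1)}$ for brevity, so that $z^{-1/(p-1)}=w^{-1}$, the assertion is equivalent to
$$
p\Phi^{p-1}(w)-(p-1)w^{-1}\Phi^p(w)+pw^{-1}\Phi^{2p-1}(w)\equiv w^{-1}\Phi^p(w)\pmod{p^2}.
$$
First I would move everything to one side and collect the coefficient of $w^{-1}\Phi^p(w)$; since $-(p-1)-1=-p$, the difference of the two expressions becomes
$$
p\Phi^{p-1}(w)-pw^{-1}\Phi^p(w)+pw^{-1}\Phi^{2p-1}(w).
$$

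Because every term now carries an explicit factor of~$p$, it suffices to show that the remaining factor vanishes modulo~$p$. Dividing by~$p$ and clearing the denominator by multiplying through by~$w$ reduces the whole problem to the single congruence
$$
w\Phi^{p-1}(w)-\Phi^p(w)+\Phi^{2p-1}(w)\equiv 0\pmod p.
$$
The decisive observation is that the left-hand side factors as $\Phi^{p-1}(w)\bigl(\Phi^p(w)-\Phi(w)+w\bigr)$. By Proposition~\ref{prop:minpol} (equivalently, by reading \eqref{eq:PhiGl} modulo~$p$ with~$z$ replaced by~$w$), the second factor $\Phi^p(w)-\Phi(w)+w$ is identically zero modulo~$p$. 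Hence the product vanishes modulo~$p$, which establishes the reduced congruence and therefore the Corollary.

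I expect no genuine obstacle here: the argument rests entirely on the fundamental relation $\Phi^p-\Phi+z\equiv0\pmod p$, applied after a one-line factorisation. The only point requiring a little care is the bookkeeping of the substitution $w=z^{1/(p-1)}$, together with checking that the three exponents $p-1$, $p$, and $2p-1$ are exactly those for which, after extracting the common factor $\Phi^{p-1}(w)$, the residual expression is precisely the minimal-polynomial combination $\Phi^p(w)-\Phi(w)+w$ modulo~$p$.
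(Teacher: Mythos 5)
Your proposal is correct and is essentially the paper's own argument: the paper likewise invokes the relation $\Phi^p(z)-\Phi(z)+z\equiv 0\pmod p$ (i.e.\ \eqref{eq:PhiRel} with $\al=0$, $z$ replaced by $z^{1/(p-1)}$) and applies it to the two terms of \eqref{eq:Loes1FCat} carrying the explicit factor~$p$. Your rearrangement into the single factorisation $p\,w^{-1}\Phi^{p-1}(w)\bigl(\Phi^p(w)-\Phi(w)+w\bigr)$ is just a tidier way of writing the same one-line reduction.
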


\begin{proof}
One applies relation \eqref{eq:PhiRel} with $\al=0$ and $z$ replaced
by $z^{1/(p-1)}$ to the terms in \eqref{eq:Loes1FCat} which have
coefficient~$p$.
\end{proof}

Explicitly, this leads to the following congruences modulo~$p^2$.

\begin{corollary} \label{cor:F25}
The Fu\ss--Catalan numbers $F(n;p)=\frac {1} {n}\binom {pn}{n-1}$
obey the following congruences modulo $p^2${\em:}

\begin{enumerate}
\item[(i)]
If $n=\frac {1} {p-1}\left(p^i-1\right)$ with $i\ge1$,
then $F(n;p)\equiv1$~{\em(mod~$p^2$)}.
\item[(ii)]
If 
$$n=\frac {1} {p-1}\left(a_1p^{i_1}+a_2p^{i_2}+\dots+
a_rp^{i_r}-1\right)$$
with all $a_i$'s relatively prime to~$p$,
$a_1+a_2+\dots+a_r=p$, $r\ge2$, and $i_1>i_2>\dots>i_r\ge0$, then we have
$$F(n;p)\equiv
\frac {p!} {a_1!\,a_2!\cdots a_r!}
\pmod{p^2}.$$
\item[(iii)]
In the cases not covered by items {\em(i)} and {\em(ii),} 
the Fu\ss--Catalan number $F(n;p)$ is divisible by~$p^2$.
\end{enumerate}
\end{corollary}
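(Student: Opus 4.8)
The plan is to read the congruences straight off the closed form furnished by Corollary~\ref{cor:FCat27},
\[
\sum_{n\ge1}F(n;p)\,z^n = z^{-1/(p-1)}\Phi^p\!\left(z^{1/(p-1)}\right)\quad\text{modulo }p^2 .
\]
After the substitution $w=z^{1/(p-1)}$, extracting the coefficient of $z^n$ on the left is the same as extracting the coefficient of $w^M$ in $\Phi^p(w)$, where $M:=(p-1)n+1$. Hence $F(n;p)$ is congruent modulo $p^2$ to $\langle w^M\rangle\,\Phi^p(w)$, and the entire corollary reduces to computing this single coefficient. Note that this map is consistent and bijective between positive integers $n$ and admissible exponents $M$: since $p^k\equiv1\pmod{p-1}$, every exponent occurring in $\Phi^p(w)$ satisfies $M\equiv p\equiv1\pmod{p-1}$, matching $M=(p-1)n+1$, and $n\ge1$ corresponds to $M\ge p$.

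First I would expand $\Phi^p(w)$ by means of \eqref{eq:Phipot} with $K=p$, which gives
\[
\big\langle w^M\big\rangle\,\Phi^p(w)=\sum\frac{p!}{b_1!\,b_2!\cdots b_r!},
\]
the sum ranging over all representations $M=b_1p^{n_1}+\dots+b_rp^{n_r}$ with $b_1+\dots+b_r=p$, all $b_i\ge1$, and $n_1>n_2>\dots>n_r\ge0$. The crux is then to classify these representations. If some $b_i$ equals $p$, then necessarily $r=1$, the multinomial coefficient is $1$, and $M=p^{n_1+1}$ is a power of~$p$. If instead all $b_i<p$ --- which is exactly the case $r\ge2$, since $r\ge2$ together with $\sum b_i=p$ and $b_i\ge1$ forces $1\le b_i\le p-1$ --- then each $b_i$ is automatically relatively prime to~$p$, and $M=\sum b_ip^{n_i}$ with distinct exponents and all digits below~$p$ is nothing but the base-$p$ expansion of~$M$ restricted to its nonzero digits. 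By uniqueness of the base-$p$ expansion (equivalently, by Lemma~\ref{lem:aiodd} together with Remark~\ref{rem:eff}) there is \emph{at most one} such representation, it occurs precisely when the base-$p$ digit sum of~$M$ equals~$p$, and its contribution is the single multinomial $\frac{p!}{b_1!\cdots b_r!}$ read off from the nonzero digits $b_1,\dots,b_r$ of~$M$.

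Putting this together, the integer $\langle w^M\rangle\,\Phi^p(w)$ equals $1$ when $M$ is a power of~$p$, equals $\frac{p!}{b_1!\cdots b_r!}$ when the base-$p$ digit sum of~$M$ is~$p$, and equals $0$ otherwise; the two non-trivial cases are mutually exclusive because a power of~$p$ has digit sum~$1$. Translating back through $M=(p-1)n+1$: the case $M=p^i$ with $i\ge1$ becomes $n=\frac{1}{p-1}(p^i-1)$ and yields $F(n;p)\equiv1\pmod{p^2}$, which is item~(i); the digit-sum-$p$ case, writing the nonzero digits as $a_1,\dots,a_r$ at positions $i_1>\dots>i_r$, becomes $n=\frac{1}{p-1}(a_1p^{i_1}+\dots+a_rp^{i_r}-1)$ with $a_1+\dots+a_r=p$ and $r\ge2$, and yields $F(n;p)\equiv\frac{p!}{a_1!\cdots a_r!}\pmod{p^2}$, which is item~(ii); all remaining $n$ give $\langle w^M\rangle\,\Phi^p(w)=0$ and hence $F(n;p)\equiv0\pmod{p^2}$, which is item~(iii).

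I expect the only delicate point to be the classification of representations: one must verify both that every admissible representation with $r\ge2$ is \emph{literally} the base-$p$ expansion of~$M$ (so that distinct ordered tuples cannot represent the same~$M$ and the multinomial contribution is genuinely unique, each $H$-series contributing coefficient exactly~$1$ by Remark~\ref{rem:eff}), and that the power-of-$p$ and digit-sum-$p$ regimes are disjoint and exhaustive over the admissible~$M$. Once this bookkeeping is settled, no estimation whatsoever is required, since the coefficient of $w^M$ in $\Phi^p(w)$ is an exact non-negative integer; the reduction modulo~$p^2$ enters only through Corollary~\ref{cor:FCat27}, and in item~(ii) the resulting multinomial is automatically a multiple of~$p$ (as $p\mid p!$ while each $a_i!$ with $a_i\le p-1$ is prime to~$p$), consistent with the divisibility pattern asserted.
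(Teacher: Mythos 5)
Your proposal is correct and takes essentially the same approach as the paper: both start from Corollary~\ref{cor:FCat27}, expand $\Phi^p$ via \eqref{eq:Phipot}, and extract the coefficient of each admissible monomial by exploiting the uniqueness of the representation $M=\sum b_ip^{n_i}$ (the paper routes this through Proposition~\ref{conj:H} and Remark~\ref{rem:eff}, you through uniqueness of the base-$p$ expansion, which is the same fact since all $b_i\le p-1$ when $r\ge2$). The only cosmetic difference is that the paper first normalizes the $r=1$ term by rewriting $H_p$ as $H_1$ minus a monomial, whereas you classify that case directly.
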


\begin{proof}
By means of Equation~\eqref{eq:Phipot} and 
Proposition~\ref{conj:H},
we convert the right-hand side of \eqref{eq:Loes1FCata} into a linear
combination of series $H_{a_1,a_2,\dots,a_r}\!\left(z^{1/(p-1)}\right)$,
with all $a_i$'s relatively prime to~$p$. The result is
\begin{multline*} 
\sum _{n\ge1} ^{}\frac {1} {n}\binom {pn}{n-1}\,z^n
= 
z^{-1/(p-1)}H_{1}\!\left(z^{1/(p-1)}\right)-1\\
+z^{-1/(p-1)} \sum _{r=1} ^{p}
\underset{a_1+\dots+a_r=p\text{ and }r\ge2}{\sum _{a_1,\dots,a_r\ge1} ^{}}
\frac {p!}
{a_1!\,a_2!\cdots a_r!}H_{a_1,a_2,\dots,a_r}\!\left(z^{1/(p-1)}\right)
\quad \quad 
\text {modulo }p^2.
\end{multline*}
Coefficient extraction then leads to our claim.
\end{proof}
}

\section{The number of blossom trees modulo $p$-powers}
\label{sec:Schaef}

The combinatorial objects which we treat in this section are so-called
``blossom trees." These are a particular kind of trees which are
of great significance in the combinatorial understanding of the
enumeration of maps; see e.g.\ the survey \cite{BousAS}. 
The particular blossom trees that we are interested in are
the ones in \cite[Sec.~3]{SchaAA}. Since the precise definition
is slightly technical and not needed here, 
we omit it, and instead
refer the reader to \cite{SchaAA}.

For an odd positive integer $k$, let $B(n;k)$ be the number of
blossom trees constructed from $k$-ary trees with $n$ white nodes by
adding a black node with $k-1$ buds on each inner edge.
Let $B_k(z):=\frac {k+1} {2}+\sum _{n\ge1} ^{}B(n;k)\,z^n$ be the
corresponding generating function. Schaeffer \cite[Cor.~2]{SchaAA}
proved that 
\begin{equation} \label{eq:Snp}
B(n;k)=\frac {k+1} {n((k-1)n+2)}\binom {kn}{n-1}.
\end{equation}

As we show in the Appendix, the generating function
$B_k(z)$ satisfies the polynomial equation
\begin{multline} \label{eq:Sdiff}
z^2B^k_k(z) + 
\sum_{s=0}^{(k+1)/2}
(-1)^s\frac {k+1} {(k-s+1)(k-s)}\binom {k-s+1}s
k^{k-2s+1}\left(\frac {k-1} {2}\right)^{s}
zB_k^s(z) \\
-(-1)^k\left(\frac {k-1} {2}\right)^{k-1}B_k(z) 
+(-1)^k\frac {k+1} {2}\left(\frac {k-1} {2}\right)^{k-1}
= 0.
\end{multline}
It is not difficult to see that \eqref{eq:Sdiff} determines $B_k(z)$
uniquely as a formal power series over~$\Z$, and as well over any
power of a prime number~$p$ as long as \hbox{$k\not\equiv1$}~(mod~$p$).

\medskip
From now on, let $p$ be a fixed odd prime number.

\begin{theorem} \label{thm:Schaeff}
Let $\Phi(z)=\sum _{n\ge0} ^{}z^{p^n},$ and let $\al$ be a
non-negative integer.
Then the generating function $B_p(z),$ when reduced modulo $p^{p^\al},$ 
can be expressed as a polynomial in $\Phi\left(z^{1/(p-1)}\right)$ 
of degree at most
$p^{\al+1}-1,$ with coefficients that are Laurent polynomials in $z^{1/(p-1)}$
over the integers.
\end{theorem}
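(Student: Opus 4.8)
The plan is to apply the method of Section~\ref{sec:method} exactly as in the proofs of Theorems~\ref{thm:N}, \ref{thm:K}, and especially \ref{thm:FCat}. First I would clear the fractional exponents by replacing $z$ with $z^{p-1}$ in \eqref{eq:Sdiff} (with $k=p$), writing $\tilde B_p(z)=B_p(z^{p-1})$ and $\tilde\Phi(z)=\sum_{n\ge0}z^{p^n}$; the task then becomes to show that, modulo $p^{p^\al}$, the series $\tilde B_p(z)$ is a polynomial in $\tilde\Phi(z)$ of degree at most $p^{\al+1}-1$ with Laurent polynomial coefficients in $z$, which upon substituting back $z\to z^{1/(p-1)}$ yields the assertion. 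Throughout, $p$ odd guarantees that $\tfrac{p\pm1}2$ are integers and that $2$ is invertible modulo~$p$, while $k=p\not\equiv1\pmod p$ secures uniqueness of the power series solution.

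For the base step I would reduce the equation modulo $p$. The crucial simplification is that the factor $p^{p-2s+1}$ annihilates every term of the inner sum except the one with $s=(p+1)/2$ (where the exponent vanishes), while Fermat's little theorem gives $\left(\frac{p-1}2\right)^{p-1}\equiv1$ together with $\frac{p+1}2\equiv\frac12\pmod p$. Hence, after the substitution, the congruence to be solved modulo~$p$ reads $z^{2(p-1)}\tilde B_p^{\,p}+\bar c\,z^{p-1}\tilde B_p^{\,(p+1)/2}+\tilde B_p-\frac12\equiv0$, with $\bar c$ a unit. As in \eqref{eq:baseN} and Theorem~\ref{thm:FCat27}, I would determine a base solution $F_1$ in the shape of a polynomial in $\tilde\Phi^{p^\al}(z)$ whose coefficients are built from $s_\al(z)=\sum_{k=0}^{\al-1}z^{p^k}$, first guessing it from the automatically computed cases $p=3,5,7$ and then verifying it in general by direct substitution, using $\tilde\Phi^p\equiv\tilde\Phi-z$ and the relation $s_\al^p\equiv s_\al+z^{p^\al}-z\pmod p$ (the general-$p$ analogue of \eqref{eq:sal}). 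I expect this base solution to be somewhat more intricate than its Fu\ss--Catalan counterpart, because the vanishing of the denominator $(p-1)n+2$ modulo~$p$ at $n\equiv2$ produces additional nonzero coefficients.

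The iterative step is routine in its set-up: putting $a_{i,\be+1}=a_{i,\be}+p^\be b_{i,\be+1}$ as in \eqref{eq:Ansatz2a}, substituting, and dividing by $p^\be$ turns the passage from modulus $p^\be$ to $p^{\be+1}$ into a linear system $M\cdot b\equiv c\pmod p$, where $M$ is the matrix of multiplication by $\partial P/\partial B|_{F_1}$ expressed in the basis $\tilde\Phi^0,\dots,\tilde\Phi^{p^{\al+1}-1}$ and reduced by means of \eqref{eq:PhiRel2}; here $P(z,B)$ denotes the left-hand side of \eqref{eq:Sdiff} with $k=p$ and $z$ replaced by $z^{p-1}$. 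The main obstacle will be to prove that $M$ is invertible over the ring of Laurent polynomials, i.e.\ that $\det M$ is a \emph{monomial} in~$z$ (mere nonvanishing does not suffice, since the $b_{i,\be+1}$ must again be Laurent polynomials). Conceptually, since the reduction relation \eqref{eq:PhiRel2} is $g^{p^\al}$ with $g=\tilde\Phi^p-\tilde\Phi+z$ monic, one has $\det M=\operatorname{Res}_{\tilde\Phi}\!\big(g,\ \partial P/\partial B|_{F_1}\big)^{p^\al}$, and the fact forcing invertibility is that $\partial P/\partial B|_{F_1}\equiv \bar c\,\tfrac{p+1}2\,z^{p-1}\tilde B_p^{(p-1)/2}+1$ has constant term~$1$ (coming from the unit linear coefficient $\left(\frac{p-1}2\right)^{p-1}\equiv1$). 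Concretely I would mimic the block row-reduction carried out for Theorems~\ref{thm:N} and \ref{thm:K}: after subtracting suitable multiples of the lowest block of rows and permuting, $M$ should reduce to a form whose determinant is a power of $s_\al^p-s_\al-z^{p^\al}=-z$. Carrying out this collapse uniformly in~$p$ — with $p^\al\times p^\al$ diagonal blocks $D(\cdot)$ and a derivative $\partial P/\partial B|_{F_1}$ of degree $(p-1)/2$ in $\tilde\Phi$, rather than the three-block cubic situation of the earlier theorems — is the genuinely new computation, and is where I expect the real work to lie.
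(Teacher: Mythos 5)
Your skeleton is the paper's own: the mod-$p$ reduction you describe (only the $s=(p+1)/2$ term of the inner sum survives the factor $p^{p-2s+1}$, and Fermat's little theorem normalises the remaining coefficients) is exactly \eqref{eq:Sdiff2}, and the iterative step is indeed a linear system $M\cdot b\equiv c$ modulo~$p$, with $M$ the matrix of multiplication by $J=\partial P/\partial B|_{F_1}$ reduced via \eqref{eq:PhiRel2}, whose determinant must be shown to be a monomial in~$z$. But the two items you leave open are not routine loose ends --- they are the entire mathematical content of the proof --- and the first of them blocks everything else. You never produce the base solution $F_1$; ``guess from $p=3,5,7$, then verify'' is a plan for finding it, not a proof that it exists, and nothing in the method of Section~\ref{sec:method} guarantees existence. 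The paper exhibits it explicitly:
\begin{equation*}
F_1(z)=2^{-1}z^{-2/(p-1)}\left(\Phi^{p^\al}\!\left(z^{1/(p-1)}\right)+s_\al\!\left(z^{1/(p-1)}\right)\right)^{2},
\end{equation*}
that is, $a_{0,1}=2^{-1}z^{-2/(p-1)}s_\al^2$, $a_{p^\al,1}=z^{-2/(p-1)}s_\al$, $a_{2\cdot p^\al,1}=2^{-1}z^{-2/(p-1)}$, all other $a_{i,1}=0$, where $s_\al$ is short for $s_\al\big(z^{1/(p-1)}\big)$. Contrary to your guess that this should be \emph{more} intricate than the Fu\ss--Catalan case, it is simply $2^{-1}$ times the \emph{square} of a Fu\ss--Catalan-type base solution; this is forced by Lemma~\ref{lem:S-T}, since \eqref{eq:S-T} with $k=p$ reduces modulo~$p$ to $B_p(z)\equiv2^{-1}\left(1+T_p(z)\right)^2$. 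This closed square form is what makes both the base-step verification (via \eqref{eq:sal}) and the whole iterative step tractable.

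The second gap is that your justification of the invertibility of $M$ is not valid. The resultant identity $\det M=\pm\operatorname{Res}_{t}\big(g,J\big)^{p^\al}$, with $g=t^p-t+z^{1/(p-1)}$, is correct, and is in fact an elegant alternative to the paper's block row-reduction; but ``$J$ has constant term~$1$'' forces nothing: $J=1+zt$ has constant term~$1$, yet $\operatorname{Res}_t(g,1+zt)=\pm z^p\,g(-z^{-1})$ is not a monomial. What actually collapses the resultant is the specific $F_1$ above, which you do not have. With it, $J=1-\big(\Phi^{p^\al}+s_\al\big)^{p-1}$, and every root $\theta$ of $g$ satisfies $\theta^{p^\al}=\theta-s_\al$ (the root-level form of \eqref{eq:sal}), hence $J(\theta)=1-\theta^{p-1}$ and
\begin{equation*}
\operatorname{Res}_t(g,J)=\prod_{g(\theta)=0}\left(1-\theta^{p-1}\right)
=-\prod_{a\in\mathbb{F}_p^{\ast}}g(a)=-z,
\end{equation*}
so that $\det M=\pm z^{p^\al}$ --- the conclusion the paper reaches ($\det M=z^{p^\al}$ modulo~$p$) by explicit row operations on the $p\times p$ block matrix. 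So the determinant computation you defer as ``the real work'' cannot even be set up until the base solution is found, and the heuristic you offer in its place is false; only the resultant \emph{framework} survives, and, once $F_1$ is known, it does carry through.
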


\begin{proof} 
Again, we apply the method from Section~\ref{sec:method}. 
To begin with, we need a ``base solution" 
\begin{equation} \label{eq:F1} 
F_1(z)=\sum _{i=0} ^{p^{\al+1}-1}a_{i,1}(z)\Phi^{i}\!\left(z^{1/(p-1)}\right)
\end{equation}
to \eqref{eq:Sdiff} modulo~$p$. At this point, it is useful to observe
that, using Fermat's little theorem, the functional equation 
\eqref{eq:Sdiff}, when taken modulo~$p$, reduces to
\begin{equation} \label{eq:Sdiff2}
z^2B^p_p(z) - 2^{(p+1)/2} zB_p^{(p+1)/2}(z) 
+B_p(z) 
-2^{-1}
= 0
\quad \quad \text{modulo }p,
\end{equation}
where here, and in the following, $2^{-1}$ denotes the inverse of~$2$
modulo~$p$. 
We claim that the following choices in \eqref{eq:F1} 
solve the above congruence:
\begin{align*}
a_{0,1}(z)&=2^{-1}z^{-2/(p-1)}s^2_\al(z^{1/(p-1)})
\quad \text {modulo }p,\\[2mm]
a_{p^{\al},1}(z)&=z^{-2/(p-1)}s_\al(z^{1/(p-1)})
\quad \text {modulo }p,\\
a_{2\cdot p^{\al},1}(z)&=2^{-1}z^{-2/(p-1)}
\quad \text {modulo }p,
\end{align*}
where $s_\al(z)=\sum _{k=0} ^{\al-1}z^{p^k}$,
with all other $a_{i,1}(z)$ vanishing.
In order to verify this claim, we first observe that, with the
above choices, $F_1(z)$ can be expressed as a square, namely
$$
F_1(z)=2^{-1}z^{-2/(p-1)}\left(
\Phi^{p^\al}\!\!\left(z^{1/(p-1)}\right)+s_\al\!\left(z^{1/(p-1)}\right)
\right)^2.
$$
We substitute this in
\eqref{eq:Sdiff2}. For the left-hand side, we obtain
\begin{multline*} 
2^{-1}z^{-2/(p-1)}\left(
\Phi^{p^\al}\!\big(z^{1/(p-1)}\big)+s_\al\big(z^{1/(p-1)}\big)
\right)^{2p} \\
- z^{-2/(p-1)}
\left(
\Phi^{p^\al}\!\!\left(z^{1/(p-1)}\right)+s_\al\big(z^{1/(p-1)}\big)
\right)^{p+1} \\
+2^{-1}z^{-2/(p-1)}\left(
\Phi^{p^\al}\!\big(z^{1/(p-1)}\big)+s_\al\big(z^{1/(p-1)}\big)
\right)^2
-2^{-1}
\quad \text {modulo }p.
\end{multline*}
Using the relation \eqref{eq:PhiRel2} with $z$ replaced by $z^{1/(p-1)}$
and reducing the obtained expression modulo~$p$,
we arrive at
\begin{align*} 
2^{-1}&z^{-2/(p-1)}\left(
\Phi^{p^{\al+1}}\!\big(z^{1/(p-1)}\big)
+s^p_\al\big(z^{1/(p-1)}\big)
\right)^{2} \\
&\kern1cm
- z^{-2/(p-1)}
\left(
\Phi^{p^{\al+1}}\!\!\left(z^{1/(p-1)}\right)+s^p_\al\big(z^{1/(p-1)}\big)
\right)
\left(
\Phi^{p^\al}\!\!\left(z^{1/(p-1)}\right)+s_\al\big(z^{1/(p-1)}\big)
\right) \\
&\kern1cm
+2^{-1}z^{-2/(p-1)}\left(
\Phi^{p^\al}\!\big(z^{1/(p-1)}\big)+s_\al\big(z^{1/(p-1)}\big)
\right)^2
-2^{-1}\\
&=2^{-1}z^{-2/(p-1)}\left(
\Phi^{p^{\al}}\!\big(z^{1/(p-1)}\big)
-z^{p^\al/(p-1)}
+s_{\al+1}\big(z^{1/(p-1)}\big)
-z^{1/(p-1)}
\right)^{2} \\
&\kern1cm
- z^{-2/(p-1)}
\left(
\Phi^{p^{\al}}\!\big(z^{1/(p-1)}\big)
-z^{p^\al/(p-1)}
+s_{\al+1}\big(z^{1/(p-1)}\big)
-z^{1/(p-1)}
\right)\\
&\kern4cm
\times
\left(
\Phi^{p^\al}\!\!\left(z^{1/(p-1)}\right)+s_\al\big(z^{1/(p-1)}\big)
\right) \\
&\kern1cm
+2^{-1}z^{-2/(p-1)}\left(
\Phi^{p^\al}\!\big(z^{1/(p-1)}\big)+s_\al\big(z^{1/(p-1)}\big)
\right)^2
-2^{-1}\\
&=2^{-1}z^{-2/(p-1)}\left(
\Phi^{p^{\al}}\!\big(z^{1/(p-1)}\big)
+s_{\al}\big(z^{1/(p-1)}\big)
-z^{1/(p-1)}
\right)^{2} \\
&\kern1cm
- z^{-2/(p-1)}
\left(
\Phi^{p^{\al}}\!\big(z^{1/(p-1)}\big)
+s_{\al}\big(z^{1/(p-1)}\big)
-z^{1/(p-1)}
\right)\\
&\kern4cm
\times
\left(
\Phi^{p^\al}\!\!\left(z^{1/(p-1)}\right)+s_\al\big(z^{1/(p-1)}\big)
\right) \\
&\kern1cm
+2^{-1}z^{-2/(p-1)}\left(
\Phi^{p^\al}\!\big(z^{1/(p-1)}\big)+s_\al\big(z^{1/(p-1)}\big)
\right)^2
-2^{-1}
\quad \text {modulo }p.
\end{align*}
By collecting terms, this expression may be simplified to zero.

\medskip
After we have completed the ``base  step," we now proceed with the
iterative steps described in Section~\ref{sec:method}. 
Our Ansatz
here (replacing the corresponding one in
\eqref{eq:Ansatz2}--\eqref{eq:Ansatz2b}) is
\begin{equation} \label{eq:AnsatzAp}
B_p(z)
=\sum _{i=0} ^{p^{(\al+1)h}-1}a_{i,\be+1}(z)
\Phi^i\left(z^{1/(p-1)}\right)\quad 
\text{modulo }p^{\be+1},
\end{equation}
with
\begin{equation} \label{eq:AnsatzApa}
a_{i,\be+1}(z):=a_{i,\be}(z)+p^{\be}b_{i,\be+1}(z),\quad 
i=0,1,\dots,p^{(\al+1)h}-1,
\end{equation}
where the
coefficients $a_{i,\be}(z)$ are supposed to provide a solution
$$F_\be(z)=\sum _{i=0}
^{p^{(\al+1)h}-1}a_{i,\be}(z)\Phi^i\left(z^{1/(p-1)}\right)$$ 
to
\eqref{eq:Sdiff} modulo~$p^\be$. This Ansatz, substituted in
\eqref{eq:Sdiff}, produces the congruence
\begin{multline} \label{eq:ItGlS}
2(-1)^{(p+1)/2}\left(\frac {p-1} {2}\right)^{(p-1)/2}
\frac {p+1} {2}\\
\kern3cm
\times
p^\be z
\left(\sum _{i=0} ^{p^{\al+1}-1}
b_{i,\be+1}(z)\Phi^{i}\!\!\left(z^{1/(p-1)}\right)\right)
\left(\sum _{j=0} ^{p^{\al+1}-1}
a_{j,\be}(z)\Phi^{j}\!\!\left(z^{1/(p-1)}\right)\right)^{(p-1)/2}\\
+\left(\frac {p-1} {2}\right)^{p-1}
p^\be\left(\sum _{i=0} ^{p^{\al+1}-1}
b_{i,\be+1}(z)\Phi^{i}\!\!\left(z^{1/(p-1)}\right)\right)
\kern5.5cm\\
+z^2F_\be^p(z) + 
\sum_{s=0}^{(p+1)/2}
(-1)^s\frac {p+1} {(p-s+1)(p-s)}\binom {p-s+1}s
p^{p-2s+1}\left(\frac {p-1} {2}\right)^{s}
zF_\be^s(z) \\
+\left(\frac {p-1} {2}\right)^{p-1}F_\be(z) 
-\frac {p+1} {2}\left(\frac {p-1} {2}\right)^{p-1}
=0
\quad \quad 
\text {modulo }p^{\be+1}.
\end{multline}
Since the sum has the prefactor $p^\be$, we may reduce the
$a_{i,\be}(z)$ modulo $p$. By construction, we have
\begin{align*}
a_{0,\be}(z)&=a_{0,1}(z)=2^{-1}z^{-2/(p-1)}s^2_\al(z^{1/(p-1)})
\quad \quad \text{modulo }p,\\
a_{p^\al,\be}(z)&=a_{p^\al,1}(z)=z^{-2/(p-1)}s_\al(z^{1/(p-1)})
\quad \quad \text{modulo }p,\\
a_{2\cdot p^\al,\be}(z)&=a_{2\cdot p^\al,1}(z)=2^{-1}z^{-2/(p-1)}
\quad \quad \text{modulo }p,
\end{align*}
and $a_{i,\be}(z)=0$ modulo $p$ for all other $i$'s. 
Equivalently, we have
$$
F_\be(z)=
2^{-1}z^{-2/(p-1)}
\left(\Phi^{p^\al}\!\!\left(z^{1/(p-1)}\right)
+s_\al\!\left(z^{1/(p-1)}\right)\right)^2
\quad \quad \text{modulo }p.
$$
If we substitute this in \eqref{eq:ItGlS} and divide both sides of
the congruence by $p^\be$ (which is possible by our assumption
on $F_\be(z)$), we obtain
\begin{multline*} 
-\left(\sum _{i=0} ^{p^{\al+1}-1}
b_{i,\be+1}(z)\Phi^{i}\!\!\left(z^{1/(p-1)}\right)\right)
\left(\Phi^{p^\al}\!\!\left(z^{1/(p-1)}\right)
+s_\al\!\left(z^{1/(p-1)}\right)\right)^{p-1}\\
+
\left(\sum _{i=0} ^{p^{\al+1}-1}
b_{i,\be+1}(z)\Phi^{i}\!\!\left(z^{1/(p-1)}\right)\right)
+G_\be(z)\\
=
-\left(\sum _{i=0} ^{p^{\al+1}-1}
b_{i,\be+1}(z)\Phi^{i}\!\!\left(z^{1/(p-1)}\right)\right)
\left(\sum_{j=0} ^{p-1}(-1)^j
\Phi^{j p^\al}\!\!\left(z^{1/(p-1)}\right)
s_\al^{p-j-1}\!\left(z^{1/(p-1)}\right)
\right)\\
+
\left(\sum _{i=0} ^{p^{\al+1}-1}
b_{i,\be+1}(z)\Phi^{i}\!\!\left(z^{1/(p-1)}\right)\right)
+G_\be(z)=0
\quad \quad 
\text {modulo }p,
\end{multline*}
where $G_\be(z)$ is some polynomial in $\Phi\left(z^{1/(p-1)}\right)$
whose coefficients are Laurent polynomials in $z^{1/(p-1)}$ with
integer coefficients.
Upon using
\eqref{eq:PhiRel2} with $z$ replaced by $z^{1/(p-1)}$ 
to reduce high powers of $\Phi\big(z^{1/(p-1)}\big)$, we 
convert this congruence into
\begin{multline*} 
-\sum_{j=0} ^{p-1}
\sum _{i=jp^\al} ^{p^{\al+1}-1}
\Phi^{i}\!\!\left(z^{1/(p-1)}\right)
(-1)^jb_{i-jp^\al,\be+1}(z)
s_\al^{p-j-1}\!\left(z^{1/(p-1)}\right)
\\
-\sum_{j=0} ^{p-1}
\sum _{i=p^{\al}} ^{(j+1)p^{\al}-1}
\Phi^{i}\!\!\left(z^{1/(p-1)}\right)
(-1)^jb_{i+(p-j-1)p^\al,\be+1}(z)
s_\al^{p-j-1}\!\left(z^{1/(p-1)}\right)
\\
+
\sum_{j=0} ^{p-1}
\sum _{i=0} ^{jp^\al-1}
\Phi^{i}\!\!\left(z^{1/(p-1)}\right)
(-1)^jz^{p^\al/(p-1)}b_{i+(p-j)p^\al,\be+1}(z)
s_\al^{p-j-1}\!\left(z^{1/(p-1)}\right)
\\
+
\left(\sum _{i=0} ^{p^{\al+1}-1}
b_{i,\be+1}(z)\Phi^{i}\!\!\left(z^{1/(p-1)}\right)\right)
+G_\be(z)
=0
\quad \quad 
\text {modulo }p,
\end{multline*}

Comparison of powers of $\Phi(z)$ then yields a system of congruences
of the form
\begin{equation} \label{eq:AbcS} 
M\cdot b=c\quad \text{modulo }p,
\end{equation}
where $b$ is the column vector of unknowns
$(b_{i,\be+1}(z))_{i=0,1,\dots,p^{\al+1}-1}$, $c$ is a (known)
column vector of Laurent polynomials in $z$, and $M$ is the matrix
{\small
$$
\begin{pmatrix} 
D(1-s^{p-1}_\al)&D(z^{A})&D(-z^{A}s_\al)&\dots&
D(z^{A}s^{p-3}_\al)&D(-z^{A}s^{p-2}_\al)\\
D(s^{p-2}_\al)&D(-s^{p-1}_\al)&D(s_\al+z^{A})&\dots&
D(-s^{p-3}_\al-z^{A}s^{p-4}_\al)&D(s^{p-2}_\al+z^{A}s^{p-3}_\al)\\
D(-s^{p-3}_\al)&D(s^{p-2}_\al)&D(-s^{p-1}_\al)&\dots&
D(s^{p-4}_\al+z^{A}s^{p-5}_\al)&D(-s^{p-3}_\al-z^{A}s^{p-4}_\al)\\
\vdots&\vdots&\vdots&\ddots\\
D(s_\al)&D(-s^2_\al)&D(s^3_\al)&\dots&D(-s^{p-1}_\al)&D(s_\al+z^A)\\
D(-1)&D(s_\al)&D(-s^2_\al)&\dots&D(s^{p-2}_\al)&D(-s^{p-1}_\al)
\end{pmatrix},
$$}%
where $s_\al$ is short for $s_\al\big(z^{1/(p-1)}\big)$,
$A$ is short for $p^\al/(p-1)$, and
$D(x)$ denotes the $p^\al\times p^\al$ diagonal matrix whose
diagonal entries equal $x$. More precisely, the $(r,t)$-block of the
matrix, $0\le r,t\le p-1$, is given by
$$
\begin{cases} 
D(1-s^{p-1}_\al),&\text{if }r=t=0,\\
D((-1)^{t-1}z^As^{t-1}_\al),&\text{if $r=0$ and }t>0,\\
D((-1)^{r-t+1}s^{p-1-r+t}_\al),&\text{if }r\ge t\text{ but not $r=t=0$},\\
D((-1)^{t-r-1}s^{t-r-1}_\al(s_\al+z^A)),&\text{if }0<r<t.
\end{cases}
$$

We claim that
$$
\det (M)=z^{p^\al}\quad \quad \text{modulo }p.
$$
In order to see this, 
we add $1-s^{p-1}_\al(z)$ times row
$i+(p-1)\cdot p^\al$ to row $i$, $i=0,1,\dots,p^{\al}-1$,
and we add $s_\al\big(z^{1/(p-1)}\big)$ times row
$i+p^\al$ to row $i$, $i=p^\al,p^\al+1,\dots,p^{\al+1}-1$.
This leads to the matrix
$$
\begin{pmatrix} 
0&D(s_\al+z^{A}-s^p_\al)&*&\dots&
*&*\\
0&0&D(s_\al+z^{A}-s^{p}_\al)&\dots&
0&0\\
0&0&0&\dots&
0&0\\
\vdots&\vdots&\vdots&\ddots\\
0&0&0&\dots&0&D(s_\al+z^A-s^p_\al)\\
D(-1)&D(s_\al)&D(-s^2_\al)&\dots&D(s^{p-2}_\al)&D(-s^{p-1}_\al)
\end{pmatrix}.
$$
If, in addition, we move rows $(p-1)\cdot p^\al,(p-1)\cdot p^\al+1,
\dots,p^{\al+1}-1$ to the top of the matrix, then
it attains an upper triangular form, from which
one infers that
\begin{align*}
\det(M)&=(-1)^{(p-1)\cdot p^\al\cdot p^\al}
\det\!{}^{p-1} \big(D(s_\al\big(z^{1/(p-1)}\big)
+z^{p^\al/(p-1)}-s^p_\al\big(z^{1/(p-1)})\big)\big)
\det\big(D(1)\big)\\
&=\big(s_\al\big(z^{1/(p-1)}\big)
+z^{p^\al/(p-1)}-s^p_\al\big(z^{1/(p-1)}\big)\big)^{(p-1)\cdot p^\al}\\
&=z^{p^\al}\quad \quad \text{modulo }p,
\end{align*}
as was claimed. As a consequence,
the system \eqref{eq:AbcS} is
(uniquely) solvable. Thus, we have proved that, for an arbitrary non-negative
integer $\al$, the algorithm of
Section~\ref{sec:method} will produce a solution $F_{{p^{\al}}}(z)$ 
to \eqref{eq:Sdiff} modulo $p^{p^\al}$ which is a
polynomial in $\Phi\big(z^{1/(p-1)}\big)$ 
with coefficients that are Laurent polynomials in $z^{1/(p-1)}$.
\end{proof}

A direct corollary of the $\al=0$ case of the preceding proof is
the following.

\begin{corollary} \label{cor:B7}
The blossom tree numbers $B(n;p)=\frac {p+1} {n((p-1)n+2)}\binom {pn}{n-1}$
obey the following congruences modulo $p${\em:}

\begin{enumerate}
\item[(i)]
If $n=\frac {1} {p-1}(p^{i_1}+p^{i_2}-2)$ 
with $i_1-1>i_2\ge0$,
then $B(n;p)\equiv1$~{\em(mod~$p$)}.

\item[(ii)]
If $n=\frac {1} {p-1}\left(2p^i-2\right)$ with $i\ge1$,
then $B(n;p)\equiv\frac {p+1} {2}$~{\em(mod~$p$)}.

\item[(iii)]
In the cases not covered by items {\em(i)} and {\em(ii),} 
the number $B(n;p)$ is divisible by~$p$.
\end{enumerate}
\end{corollary}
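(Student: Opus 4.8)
The plan is to specialise the construction in the proof of Theorem~\ref{thm:Schaeff} to $\al=0$ and then merely read off coefficients, exactly as the phrase ``a direct corollary of the $\al=0$ case'' suggests. When $\al=0$ the auxiliary polynomial $s_\al(z)=\sum_{k=0}^{\al-1}z^{p^k}$ is an empty sum, so $s_0(z)=0$, and the base solution found there collapses to the single closed form
$$
B_p(z)=2^{-1}z^{-2/(p-1)}\Phi^2\!\left(z^{1/(p-1)}\right)\quad\text{modulo }p,
$$
where $2^{-1}$ is the inverse of $2$ modulo~$p$, which exists since $p$ is odd. This already encodes all mod-$p$ information about the numbers $B(n;p)$ in~\eqref{eq:Snp}, so what remains is purely coefficient extraction.

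First I would expand $\Phi^2$ by means of~\eqref{eq:Phipot} with $K=2$, which gives $\Phi^2(w)=H_2(w)+2H_{1,1}(w)$. Both index vectors, $(2)$ and $(1,1)$, have all entries coprime to~$p$ (again because $p$ is odd), so this is already the representation in terms of the linearly independent series of Corollary~\ref{lem:Hind}, and no reduction via Proposition~\ref{conj:H} is needed. Substituting $w=z^{1/(p-1)}$ and multiplying by $2^{-1}z^{-2/(p-1)}$ then yields
$$
B_p(z)=2^{-1}\sum_{n\ge0}z^{(2p^n-2)/(p-1)}
+\sum_{n_1>n_2\ge0}z^{(p^{n_1}+p^{n_2}-2)/(p-1)}\quad\text{modulo }p,
$$
after noting that $(p-1)\mid(2p^n-2)$ and $(p-1)\mid(p^{n_1}+p^{n_2}-2)$, so that every exponent is a genuine non-negative integer.

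The heart of the argument is the coefficient extraction of Remark~\ref{rem:eff}. A given power $z^N$ receives a contribution from the first sum only if $(p-1)N+2=2p^n$, i.e.\ if $(p-1)N+2$ has $p$-ary expansion consisting of a single digit~$2$, and from the second sum only if $(p-1)N+2=p^{n_1}+p^{n_2}$ with $n_1>n_2$, i.e.\ if it has exactly two $p$-ary digits, both equal to~$1$. By uniqueness of $p$-ary expansions these two patterns are mutually exclusive, and each is realised by at most one choice of exponents; hence the two sums never collide and no term is multiply counted. Reading off the surviving coefficient gives $B(n;p)\equiv1$ in the two-digit case (the second sum contributes $2^{-1}\cdot2=1$), $B(n;p)\equiv 2^{-1}=\frac{p+1}{2}$ in the single-digit case, and $B(n;p)\equiv0$ otherwise. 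The summand $n=0$ of the first sum produces the constant term $\frac{p+1}2$ of $B_p(z)$, which is precisely the term that is \emph{not} a value $B(n;p)$ with $n\ge1$; this is what forces the restriction $i\ge1$ in the single-digit case. Translating ``$(p-1)N+2$ equals $2p^i$ or $p^{i_1}+p^{i_2}$'' back into $N=\frac1{p-1}(2p^i-2)$ and $N=\frac1{p-1}(p^{i_1}+p^{i_2}-2)$ then produces the stated trichotomy.

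The only genuinely delicate point is the boundary bookkeeping: keeping the constant term of $B_p(z)$ separate from the coefficients indexed by $n\ge1$, and confirming that the single-digit and two-digit patterns of $(p-1)N+2$ cannot coincide. This last exclusion is exactly where the uniqueness of representations supplied by Lemma~\ref{lem:aiodd} (equivalently the independence in Corollary~\ref{lem:Hind}) does the real work; everything else is routine transcription.
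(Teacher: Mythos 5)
Your computation follows precisely the route the paper intends---the paper gives no separate argument beyond the phrase ``a direct corollary of the $\al=0$ case''---and it is correct as far as it goes: with $s_0(z)=0$ the base solution collapses to $B_p(z)=2^{-1}z^{-2/(p-1)}\Phi^2\bigl(z^{1/(p-1)}\bigr)$ modulo $p$, the expansion $\Phi^2=H_2+2H_{1,1}$ is exactly \eqref{eq:Phipot} for $K=2$, both index tuples are coprime to the odd prime~$p$, and your treatment of the constant term and of the mutual exclusivity of the two digit patterns of $(p-1)n+2$ is sound.

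The gap is in your last step, where you claim that translating back ``produces the stated trichotomy''. It does not. Your expansion gives $B(n;p)\equiv1\pmod{p}$ for \emph{every} $n=\frac{1}{p-1}(p^{i_1}+p^{i_2}-2)$ with $i_1>i_2\ge0$, because $H_{1,1}$ runs over all pairs $n_1>n_2\ge0$; item (i) of the corollary, however, is restricted to $i_1-1>i_2\ge0$. The case $i_1=i_2+1$, i.e.\ $n=\frac{1}{p-1}\bigl((p+1)p^{i_2}-2\bigr)$, is covered neither by item (i) nor by item (ii) (note that $(p+1)p^{i_2}=2p^{j}$ is impossible for odd $p$), so item (iii) would assert $p\mid B(n;p)$ for these $n$---flatly contradicting what you derived. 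Your derivation, not the printed statement, is the correct one: for instance, $i_2=0$ gives $n=1$ and $B(1;p)=1\not\equiv0\pmod p$; moreover, Corollary~\ref{cor:B49}(i) asserts $B(n;p)\equiv1\pmod{p^2}$ for exactly these $n$, which forces $B(n;p)\equiv1\pmod p$. In other words, the condition in item (i) contains a misprint (it should read $i_1>i_2\ge0$), and a careful proof cannot simply declare that the extracted coefficients match the printed trichotomy: you must either flag and correct the misprint, or else your ``routine transcription'' silently proves something different from---and incompatible with---the statement as given.
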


Obviously, one can do better by using an implementation of the
algorithm contained in the proof of Theorem~\ref{thm:Schaeff}.
As an illustration, we display
below the result obtained for the modulus $p^2$. Again, this result
was first guessed from the automatically obtained results for
$p=3,5,7$, but, once found, it is easily verified directly by substitution
in \eqref{eq:Sdiff}.

{
\allowdisplaybreaks
\begin{theorem} \label{thm:S27}
Let $\Phi(z)=\sum _{n\ge0} ^{}z^{p^n}$.
Then we have
\begin{multline} 
\label{eq:Loes1S}
\frac {p+1} {2} + \sum _{n\ge1} ^{}B(n;p)\,z^n
=
(p+1)z^{-1/(p-1)}\Phi\!\left(z^{1/(p-1)}\right)
-\frac {p+1} {2}z^{-2/(p-1)}\Phi^2\!\left(z^{1/(p-1)}\right)\\
+z^{-2/(p-1)}\Phi^{p+1}\!\left(z^{1/(p-1)}\right)
\quad \quad 
\text {\em modulo }p^2.
\end{multline}
\end{theorem}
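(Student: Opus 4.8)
The plan is to verify the identity \eqref{eq:Loes1S} by direct substitution, using the uniqueness of the solution of \eqref{eq:Sdiff}. Write $G(z)$ for the right-hand side of \eqref{eq:Loes1S}. First I would check that $G(z)$ is a genuine power series in $z$ with the correct constant term: with $\Phi=\Phi\big(z^{1/(p-1)}\big)$, each exponent occurring in $z^{-1/(p-1)}\Phi$, in $z^{-2/(p-1)}\Phi^2$ and in $z^{-2/(p-1)}\Phi^{p+1}$ has the form $\big(\sum_i p^{n_i}-c\big)/(p-1)$ with $c$ equal to the number of $\Phi$-factors, hence is an integer, and the only contributions to $z^0$ come from $(p+1)-\tfrac{p+1}{2}=\tfrac{p+1}{2}$. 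Since \eqref{eq:Sdiff} (with $k=p$) has a unique power series solution modulo $p^2$, it then suffices to show that $G(z)$ satisfies \eqref{eq:Sdiff} modulo $p^2$.

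Next I would clear the fractional powers by setting $u=z^{1/(p-1)}$, so that $z=u^{p-1}$ and $\Phi=\Phi(u)=\sum_{n\ge0}u^{p^n}$, whence $u^2G=\Phi^{p+1}-\tfrac{p+1}{2}\Phi^2+(p+1)u\Phi$. Writing $\Phi^{p+1}=\Phi\cdot\Phi^p$ and introducing $\Psi:=\Phi^p-\Phi+u$, which satisfies $\Psi\equiv0$ modulo $p$ (this is \eqref{eq:PhiRel2} with $\al=0$ and $z$ replaced by $u$), a short computation gives the convenient form
\[
u^2G=-\tfrac{p-1}{2}\Phi^2+pu\Phi+\Phi\Psi .
\]
In particular, modulo $p$ one recovers $u^2G\equiv2^{-1}\Phi^2$, i.e.\ $G$ reduces to the base solution used in the proof of Theorem~\ref{thm:Schaeff}; consequently $G$ already solves the reduced equation \eqref{eq:Sdiff2} modulo $p$, and only the second $p$-adic digit remains to be checked.

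For the passage to modulus $p^2$ I would first record which terms of \eqref{eq:Sdiff} survive: in the sum over $s$ the factor $p^{p-2s+1}$ has exponent $\ge2$ for every $s\le(p-1)/2$, so modulo $p^2$ only the term $s=(p+1)/2$ (where the exponent is $0$) remains, together with $z^2B^p$, the linear term $\big(\tfrac{p-1}{2}\big)^{p-1}B$ and the constant. After clearing the common factor $u^{-2}$, the claim becomes the four-term congruence
\[
(u^2G)^p+C\,(u^2G)^{(p+1)/2}+D\,(u^2G)-\tfrac{p+1}{2}D\,u^2\equiv0 \pmod{p^2},
\]
with $D=\big(\tfrac{p-1}{2}\big)^{p-1}$ and $C$ the $s=(p+1)/2$ coefficient of \eqref{eq:Sdiff}. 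Here $\Psi\equiv0$ modulo $p$ lets one reduce the two nonlinear contributions: since $u^2G=-\tfrac{p-1}{2}\Phi^2+p(\cdot)$ with the second summand divisible by $p$, one has $(u^2G)^p\equiv(-\tfrac{p-1}{2})^p\Phi^{2p}$ and $(u^2G)^{(p+1)/2}\equiv(-\tfrac{p-1}{2})^{(p+1)/2}\Phi^{p+1}+\tfrac{p+1}{2}(-\tfrac{p-1}{2})^{(p-1)/2}\Phi^{p-1}(pu\Phi+\Phi\Psi)$ modulo $p^2$, while $\Phi^{2p}\equiv(\Phi-u)^2+2(\Phi-u)\Psi$ and $\Phi^{p+1}\equiv\Phi^2-u\Phi+\Phi\Psi$ modulo $p^2$.

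The main obstacle is the final collection of the $O(p)$ corrections. The surviving coefficients carry Fermat-quotient-type contributions modulo $p^2$: explicitly $\big(\tfrac{p-1}{2}\big)^{p-1}\equiv1+p(1-w)$ where $2^{p-1}=1+pw$, and the $s=(p+1)/2$ coefficient equals $(-1)^{(p+1)/2}2^{(3-p)/2}(p-1)^{(p-1)/2}$, which again brings in $2^{p-1}$ and $(p-1)^{(p-1)/2}$ modulo $p^2$; moreover the summands $pu\Phi$ and $\Phi\Psi$ in $u^2G$ (with $\Psi$ expressible via \eqref{eq:PhiGl} as $p$ times an explicit combination of $H$-series, reducible modulo $p$) contribute further $p$-linear pieces. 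The heart of the verification is to show that all these $w$-dependent and $\Psi$-dependent terms cancel identically modulo $p^2$. Once the mod-$p$ identity is in hand — which is exactly the base step already carried out in the proof of Theorem~\ref{thm:Schaeff} — this cancellation is a finite, fully explicit computation, and it is the only genuinely delicate point.
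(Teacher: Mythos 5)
Your strategy is the paper's own: the paper proves this theorem by exactly the route you describe — the formula is guessed from the machine output for $p=3,5,7$ and then justified by direct substitution into the functional equation together with uniqueness of its power series solution modulo $p^2$ (the paper's pointer to \eqref{eq:Sdiff2} should in substance be to \eqref{eq:Sdiff} reduced modulo $p^2$, which is what you correctly work with). Your preparatory steps all check out: the integrality of the exponents and the constant term $\frac{p+1}{2}$; the exact identity $u^2G=-\frac{p-1}{2}\Phi^2+pu\Phi+\Phi\Psi$ with $\Psi=\Phi^p-\Phi+u\equiv0\pmod p$; the observation that $G$ reduces modulo $p$ to the base solution $2^{-1}u^{-2}\Phi^2$ from the proof of Theorem~\ref{thm:Schaeff}; the survival modulo $p^2$ of only the $s=(p+1)/2$ term (though for $s=0,1$ your reason is incomplete: there the denominator $(p-s+1)(p-s)$ contains a factor $p$, and one should note that these two terms equal $p^p$ and $\frac{p+1}{2}p^{p-1}$, still divisible by $p^2$); your formula for the coefficient $C$; and the binomial reductions of $(u^2G)^p$ and $(u^2G)^{(p+1)/2}$.

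The one real defect is that you stop at the decisive point: reducing the claim to a cancellation of explicit constants and then merely asserting that this cancellation is ``a finite, fully explicit computation'' does not prove it — with slightly different constants the identical setup would end in non-cancellation, so this step \emph{is} the content of the verification. It does close, and more cleanly than your Fermat-quotient worry suggests. Put $a=-\frac{p-1}{2}$ and let $C$, $D$ be as you defined them. Then $D=a^{p-1}$ and $Ca^{(p+1)/2}=-2a^p$ hold as \emph{exact} identities in $\mathbb{Q}$ (for the second, write $(p-1)^{(p-1)/2}=(-1)^{(p-1)/2}(1-p)^{(p-1)/2}$ and collect powers of $2$ and of $1-p$). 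Consequently, after your reductions, the total coefficient of $\Phi^2$ is $a^p+Ca^{(p+1)/2}+Da=0$ identically; the coefficient of $u\Phi$ equals $p\left(\frac{p+1}{2}Ca^{(p-1)/2}+D\right)$ and that of $u^2$ is congruent to $p\left(1-2^{1-p}\right)$ modulo $p^2$, and both vanish modulo $p^2$ because $\frac{p+1}{2}Ca^{(p-1)/2}\equiv-1$, $D\equiv1$ and $2^{1-p}\equiv1$ modulo $p$ by Fermat's little theorem; finally, the coefficients of $\Phi\Psi$ and of $u\Psi$ need only vanish modulo $p$ (since $\Psi\equiv0\pmod p$), which the same mod-$p$ arithmetic gives. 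In particular no Fermat quotients survive and no $H$-series expansion of $\Psi$ via \eqref{eq:PhiGl} is needed. With this last computation supplied, your proof is complete and coincides with the argument the paper leaves to the reader.
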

}

Explicitly, this means the following.

\begin{corollary} \label{cor:B49}
The blossom tree numbers $B(n;p)=\frac {p+1} {n((p-1)n+2)}\binom {pn}{n-1}$
obey the following congruences modulo $p^2${\em:}

\begin{enumerate}
\item[(i)]
If $n=\frac {1} {p-1}\left((p+1)p^i-2\right)$ with $i\ge0$,
then $B(n;p)\equiv1$~{\em(mod~$p^2$)}.

\item[(ii)]
If $n=\frac {1} {p-1}\left(2p^i-2\right)$ with $i\ge1$,
then $B(n;p)\equiv\frac {p+1} {2}$~{\em(mod~$p^2$)}.

\item[(iii)]
If $n=\frac {1} {p-1}(p^{i_1}+p^{i_2}-2)$ 
with $i_1-1>i_2\ge0$,
then $B(n;p)\equiv p+1$~{\em(mod~$p^2$)}.

\item[(iv)]
If 
$$n=\frac {1} {p-1}\left(a_1p^{i_1}+a_2p^{i_2}+\dots+
a_rp^{i_r}-2\right)$$
with all $a_i$'s relatively prime to~$p$,
$a_1+a_2+\dots+a_r=p+1$, $r\ge2$, and $i_1>i_2>\dots>i_r\ge0$, then we have
$$B(n;p)\equiv
\frac {(p+1)!} {a_1!\,a_2!\cdots a_r!}
\pmod{p^2}.$$
\item[(v)]
In the cases not covered by items {\em(i)}--{\em(iv),} 
the number $B(n;p)$ is divisible by~$p^2$.
\end{enumerate}
\end{corollary}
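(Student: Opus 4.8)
The plan is to read the congruences off the closed form \eqref{eq:Loes1S} of Theorem~\ref{thm:S27}, following the template of the proof of Corollary~\ref{cor:F25}. Writing $w:=z^{1/(p-1)}$ for brevity, I would first convert the right-hand side of \eqref{eq:Loes1S} into a $\Z$-linear combination of series $H_{a_1,\dots,a_r}(w)$ with all $a_i$ relatively prime to~$p$ (times powers of $w$), using the expansion \eqref{eq:Phipot} together with the reduction algorithm of Proposition~\ref{conj:H}. Here $(p+1)w^{-1}\Phi(w)=(p+1)w^{-1}H_1(w)$, the term $-\frac{p+1}2w^{-2}\Phi^2(w)$ expands as $-\frac{p+1}2w^{-2}\bigl(H_2(w)+2H_{1,1}(w)\bigr)$, and in $w^{-2}\Phi^{p+1}(w)$ the only indices divisible by~$p$ come from the compositions $(p,1)$ and $(1,p)$ of $p+1$, each carrying multinomial coefficient $p+1$; these I would rewrite by the relevant cases of Hou's identities \eqref{eq:Rek} as $H_{p,1}=H_{1,1}-H_{p+1}$ and $H_{1,p}=H_{1,1}+H_2-wH_1$.

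Collecting terms, the $H_1$-contributions cancel, the constant $\frac{p+1}2$ of the left-hand side matches the $n=0$ term of $\frac{p+1}2w^{-2}H_2(w)$, and I expect to be left, modulo~$p^2$, with
\begin{multline*}
\sum_{n\ge1}B(n;p)\,z^n
= w^{-2}\Bigl(\tfrac{p+1}2H_2(w)+(p+1)H_{1,1}(w)-p\,H_{p+1}(w)\\
+\sum_{\substack{b_1+\dots+b_r=p+1,\ r\ge2\\ b_i\ge1,\ p\nmid b_i}}\frac{(p+1)!}{b_1!\cdots b_r!}\,H_{b_1,\dots,b_r}(w)\Bigr),
\end{multline*}
all indices now being relatively prime to~$p$ (the compositions $(p,1)$ and $(1,p)$ having been removed by the Hou reduction, while $H_{p+1}$ acquires coefficient $1-(p+1)=-p$). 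Next I would extract coefficients via Remark~\ref{rem:eff}: since $z^n=w^{(p-1)n}$, the residue $B(n;p)\pmod{p^2}$ is the sum of the coefficients of those series on the right in which $w^{M}$ occurs, where $M:=(p-1)n+2$.

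The core of the argument is then a statement about representations of~$M$. By Remark~\ref{rem:eff} and the independence in Corollary~\ref{lem:Hind}, $w^M$ lies in $H_{a_1,\dots,a_r}(w)$ precisely when $M=a_1p^{n_1}+\dots+a_rp^{n_r}$ for some $n_1>\dots>n_r\ge0$. I would use the elementary fact that a representation of $M$ all of whose coefficients lie in $\{1,\dots,p-1\}$ is unique and is obtained by reading off the nonzero digits of the base-$p$ expansion of~$M$. Hence $w^M$ can lie in at most one of $H_2$, $H_{1,1}$, and the composition series, governed by the $p$-ary digit sum $s(M)$: the value $s(M)=2$ singles out either $H_2$ (one digit~$2$, i.e.\ $M=2p^i$) or $H_{1,1}$ (two digits~$1$, i.e.\ $M=p^{i_1}+p^{i_2}$), while $s(M)=p+1$ singles out the unique composition series read from the nonzero digits. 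The only ``carry'' overlap is $H_{p+1}$, whose sole element $M=(p+1)p^i=p^{i+1}+p^i$ is exactly the case of two \emph{adjacent} $1$-digits; thus $H_{p+1}$ contributes precisely when $H_{1,1}$ does with $i_1=i_2+1$, combining to $(p+1)+(-p)=1$.

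Translating $M=(p-1)n+2$ back into~$n$ then produces the five cases: adjacent $1$-digits give item~(i); a single digit~$2$ gives item~(ii); two non-adjacent $1$-digits (the condition $i_1-1>i_2$) give item~(iii); $s(M)=p+1$ gives the multinomial value $\frac{(p+1)!}{a_1!\cdots a_r!}$ of item~(iv); and every other~$M$, namely those with $s(M)\notin\{2,p+1\}$, lies in none of the series, giving divisibility by~$p^2$ as in item~(v). The expansion and Hou-reduction yielding the displayed combination is routine; the step requiring care is the coefficient bookkeeping, in particular isolating the single overlap of $H_{1,1}$ with $H_{p+1}$ (the source of the boundary condition $i_1-1>i_2$ that separates items~(i) and~(iii)) and verifying that the case distinction by $s(M)$ is exhaustive.
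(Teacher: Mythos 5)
Your proposal is correct and takes essentially the same route as the paper: the paper's proof likewise converts the right-hand side of \eqref{eq:Loes1S}, by means of \eqref{eq:Phipot} and Proposition~\ref{conj:H}, into exactly the combination $\tfrac{p+1}{2}H_2+(p+1)H_{1,1}-p\,H_{p+1}+\sum\frac{(p+1)!}{a_1!\cdots a_r!}H_{a_1,\dots,a_r}$ (with the prefactor $z^{-2/(p-1)}$) that you obtain, and then reads off the congruences by coefficient extraction. Your digit-sum and overlap analysis (in particular the cancellation $(p+1)-p=1$ for adjacent $1$-digits, which separates items~(i) and~(iii)) merely spells out the extraction step that the paper leaves implicit, and it is correct.
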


\begin{proof}
By means of Equation~\eqref{eq:Phipot} and 
Proposition~\ref{conj:H},
we convert the right-hand side of \eqref{eq:Loes1S} into a linear
combination of series
$H_{a_1,a_2,\dots,a_r}\!\left(z^{1/(p-1)}\right)$ 
with all $a_i$'s
relatively prime to~$p$. The result is
\begin{multline*}
\sum _{n\ge0} ^{}B(n;p)\,z^n
=
 \frac {p+1} {2} z^{-2/(p-1)} H_{2}\!\left(z^{1/(p-1)}\right) 
+ (p+1) z^{-2/(p-1)} H_{1, 1}\!\left(z^{1/(p-1)}\right) \\
-p z^{-2/(p-1)} H_{p+1}\!\left(z^{1/(p-1)}\right) \\
+z^{-2/(p-1)} \sum _{r=1} ^{p+1}
\underset{r\ge2\text{ and }a_i\ne p\text{ for all }i}{
\underset{a_1+\dots+a_r=p+1}{\sum _{a_1,\dots,a_r\ge1} ^{}}}
\frac {(p+1)!}
{a_1!\,a_2!\cdots a_r!}H_{a_1,a_2,\dots,a_r}\!\left(z^{1/(p-1)}\right)
\quad \quad 
\text{modulo }p^2.
\end{multline*}
Coefficient extraction then yields the claimed congruences.
\end{proof}

\section*{Appendix: Proof of the functional equation \eqref{eq:Sdiff}}

\setcounter{equation}{0}%
\global\def\theequation{\mbox{A.\arabic{equation}}}

Let $k$ be a positive integer and let
$$
T_k(z)=\sum_{n\ge1}\frac {1} {n}\binom {kn}{n-1}z^n
$$
be the generating function for the general Fu\ss--Catalan numbers.
We claim that the blossom tree generating function 
$B_k(z)$ can be expressed
in terms of $T_k(z)$ as follows.

\begin{lemma} \label{lem:S-T}
For all positive integers $k$, we have
\begin{equation} \label{eq:S-T}
B_k(z)=(1+T_k(z))\left(k-\frac {k-1} {2}(1+T_k(z))\right).
\end{equation}
\end{lemma}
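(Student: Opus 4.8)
The plan is to prove the identity by extracting and comparing coefficients on both sides. First I set $g(z):=1+T_k(z)$, the standard Fu\ss--Catalan series. By the same Lagrange inversion argument that yields \eqref{eq:FCatEq} (now carried out for an arbitrary positive integer $k$ in place of $p^h$), the series $g(z)$ satisfies $g(z)=1+z\,g^k(z)$; equivalently, writing $u:=T_k(z)=g(z)-1$, one has $u=z(1+u)^k$. Since the right-hand side of \eqref{eq:S-T} expands as $k\,g(z)-\tfrac{k-1}{2}g^2(z)$, the whole task reduces to knowing the coefficients of the powers $g(z)$ and $g^2(z)$.

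To obtain these, I would apply the Lagrange--B\"urmann formula to the relation $u=z(1+u)^k$. Taking $F(u)=(1+u)^r$ and using $[z^n]F(u)=\frac1n[u^{n-1}]F'(u)(1+u)^{kn}$, a direct computation gives, for every integer $r$ and every $n\ge1$,
\begin{equation*}
[z^n]\,g^r(z)=[z^n](1+u)^r=\frac{r}{n}\binom{kn+r-1}{n-1}.
\end{equation*}
The case $r=1$ returns $[z^n]g=\frac1n\binom{kn}{n-1}$, confirming that $g=1+T_k$ as required; the case $r=2$ gives $[z^n]g^2=\frac2n\binom{kn+1}{n-1}$.

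It then remains to match coefficients. For the constant terms, the right-hand side of \eqref{eq:S-T} contributes $k-\tfrac{k-1}{2}=\tfrac{k+1}{2}$, which is exactly the constant term of $B_k(z)$. For $n\ge1$, combining the two coefficient formulas and using the ratio $\binom{kn+1}{n-1}=\frac{kn+1}{(k-1)n+2}\binom{kn}{n-1}$ yields
\begin{equation*}
[z^n]\Bigl(k\,g-\tfrac{k-1}{2}g^2\Bigr)
=\frac1n\binom{kn}{n-1}\left(k-(k-1)\,\frac{kn+1}{(k-1)n+2}\right).
\end{equation*}
Clearing the denominator in the parenthesis, the numerator $k\bigl((k-1)n+2\bigr)-(k-1)(kn+1)$ collapses to the constant $k+1$, so the bracket equals $\frac{k+1}{(k-1)n+2}$. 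Hence the $n$-th coefficient equals $\frac{k+1}{n((k-1)n+2)}\binom{kn}{n-1}=B(n;k)$ by \eqref{eq:Snp}, and the identity \eqref{eq:S-T} follows.

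The only genuinely substantive step is the Lagrange inversion evaluation of the powers $g^r$; everything afterwards is elementary binomial algebra. I therefore expect no real obstacle, the one point requiring care being the bookkeeping of the binomial ratio $\binom{kn+1}{n-1}/\binom{kn}{n-1}$ and the verification that the resulting numerator indeed simplifies to the $n$-independent constant $k+1$.
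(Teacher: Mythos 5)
Your proof is correct and takes essentially the same route as the paper: both arguments verify the identity coefficient-wise, using the functional equation $T_k(z)=z\,(1+T_k(z))^k$ together with Lagrange inversion, and finish with elementary binomial simplification to reach $\frac{k+1}{n((k-1)n+2)}\binom{kn}{n-1}$. The only cosmetic difference is that the paper applies Lagrange inversion once to the whole right-hand side (whose derivative in $u=T_k$ is $1-(k-1)u$), yielding $\frac{1}{n}\bigl(\binom{kn}{n-1}-(k-1)\binom{kn}{n-2}\bigr)$, whereas you extract coefficients from the powers $g$ and $g^2$ separately, yielding the equivalent expression $\frac{1}{n}\bigl(k\binom{kn}{n-1}-(k-1)\binom{kn+1}{n-1}\bigr)$.
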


\begin{proof}
We verify that the coefficient of $z^n$ is the same on both sides of
\eqref{eq:S-T}, for $n=0,1,\dots$ Clearly, the constant coefficient equals
$(k+1)/2$ on both sides.

Now let $n\ge1$.
It is easily seen by Lagrange inversion (see
\cite[Theorem~5.4.2]{StanBI}), that the series $T_k(z)$ satisfies
the equation 
\begin{equation} \label{eq:Tk} 
T_k(z)=z(1+T_k(z))^k.
\end{equation}
In other words, it is the
compositional inverse of $z/(1+z)^k$. By applying 
Lagrange inversion again, we infer
\begin{align*}
\coef{z^n} (1+T_k(z))\left(k-\frac {k-1} {2}(1+T_k(z))\right)
&
=\frac {1} {n}\coef{z^{-1}}\left(1-(k-1)z\right)
\frac {(1+z)^{kn}} {z^n}\\
&\kern-1cm
=\frac {1} {n}\left(\coef{z^{n-1}}(1+z)^{kn}
-(k-1)\coef{z^{n-2}}(1+z)^{kn}\right)\\
&\kern-1cm
=\frac {1} {n}\left(\binom{kn}{n-1}
-(k-1)\binom {kn}{n-2}\right)\\
&\kern-1cm
=\frac {k+1} {n((k-1)n+2)}\binom{kn}{n-1}=B(n;k).
\qedhere\end{align*}
\end{proof}

\begin{proof}[Proof of \eqref{eq:Sdiff}]
In order to establish \eqref{eq:Sdiff},
we substitute the right-hand side of \eqref{eq:S-T} for $B_k(z)$
in \eqref{eq:Sdiff}. After applying the binomial theorem, we
obtain 
\begin{multline*}
z^2\sum_{j=0}^k(-1)^j\binom kjk^{k-j}
\left(\frac {k-1} {2}\right)^jX^{k+j}(z)\\
 + 
\sum_{s=0}^{(k+1)/2}\sum_{j=0}^s
(-1)^{s+j}\frac {k+1} {(k-s+1)(k-s)}
\kern5cm\\
\kern5cm
\cdot
\binom {k-s+1}s\binom sj
k^{k-s-j+1}\left(\frac {k-1} {2}\right)^{s+j}
zX^{s+j}(z) \\
-(-1)^k\left(\frac {k-1} {2}\right)^{k-1}
X(z)\left(k-\frac {k-1} {2}X(z)\right) 
+(-1)^k\frac {k+1} {2}\left(\frac {k-1} {2}\right)^{k-1}
\end{multline*}
on the left-hand side, where $X(z)=1+T_k(z)$. In order to simplify, we use
the relation \eqref{eq:Tk} in the first term, 
we replace $s$ by $s-j$, and then write the sum over $j$
in standard hypergeometric notation
$${}_p F_q\!\left[\begin{matrix} a_1,\dots,a_p\\ b_1,\dots,b_q\end{matrix}; 
z\right]=\sum _{m=0} ^{\infty}\frac {\po{a_1}{m}\cdots\po{a_p}{m}}
{m!\,\po{b_1}{m}\cdots\po{b_q}{m}} z^m\ ,$$
where the Pochhammer symbol 
$(\alpha)_m$ is defined by $(\alpha)_m:=\alpha(\alpha+1)\cdots(\alpha+m-1)$,
$m\ge1$, and $(\alpha)_0:=1$. In this manner, we arrive at
\begin{multline*}
z\sum_{j=0}^k(-1)^j\binom kjk^{k-j}
\left(\frac {k-1} {2}\right)^j(X(z)-1)X^{j}(z)\\
+
2(-1)^{k+1}
\left(\frac {k-1} {2}\right)^{k}
zX^{k+1}(z) 
 + 
(-1)^{k}k(k+1)
\left(\frac {k-1} {2}\right)^{k-1}
zX^{k}(z) \\
+\sum_{s=0}^{k-1}
(-1)^{s}\frac {(k+1)\,(k-s-1)!} {(k-2s+1)!\,s!}
k^{k-s+1}\left(\frac {k-1} {2}\right)^{s}\kern3cm\\
\cdot
zX^{s}(z) 
\,  {} _{3} F _{2} \!\left [ \begin{matrix}
k-s,-\frac {s} {2},-\frac {s} {2}+1
\\ 
\frac {k} {2}-s+\frac {1} {2},\frac {k} {2}-s+1
\end{matrix} 
; 1\right ]  
\\
-(-1)^k\left(\frac {k-1} {2}\right)^{k-1}
X(z)\left(k-\frac {k-1} {2}X(z)\right) 
+(-1)^k\frac {k+1} {2}\left(\frac {k-1} {2}\right)^{k-1}.
\end{multline*}
The $_3F_2$-series can be evaluated by means of the
Pfaff--Saalsch\"utz summation formula (see \cite[(2.3.1.3),
Appendix~(III.2)]{SlatAC})
$$
{} _{3} F _{2} \!\left [ \begin{matrix} { a, b, -n}\\ { c, 1 + a + b - c -
   n}\end{matrix} ; {\displaystyle 1}\right ]  =
  {\frac{({ \textstyle c-a}) _{n}  \,({ \textstyle c-b}) _{n} }
    {({ \textstyle c}) _{n} \, ({ \textstyle c-a-b}) _{n} }},
$$
provided $n$ is a non-negative integer. Thus, we obtain
\begin{multline*}
\sum_{j=0}^k(-1)^j\binom kjk^{k-j}
\left(\frac {k-1} {2}\right)^jz(X(z)-1)X^{j}(z)\\
+
2(-1)^{k+1}
\left(\frac {k-1} {2}\right)^{k}
zX^{k+1}(z) 
 + 
(-1)^{k}k(k+1)
\left(\frac {k-1} {2}\right)^{k-1}
zX^{k}(z) \\
 + 
\sum_{s=0}^{k-1}
(-1)^{s}\frac {(k+s-1)} 
{2}
k^{k-s}
\left(\frac {k-1} {2}\right)^{s-1}
\binom {k+1}s
zX^{s}(z) 
\\
-(-1)^k\left(\frac {k-1} {2}\right)^{k-1}
X(z)\left(k-\frac {k-1} {2}X(z)\right) 
+(-1)^k\frac {k+1} {2}\left(\frac {k-1} {2}\right)^{k-1}\\
=
\sum_{j=0}^k(-1)^j\binom kjk^{k-j}
\left(\frac {k-1} {2}\right)^jz(X(z)-1)X^{j}(z)
\kern5cm\\
+
(-1)^{k+1}
\left(\frac {k-1} {2}\right)^{k}
zX^{k+1}(z) 
 + 
(-1)^{k}\frac {k+1} {2}
\left(\frac {k-1} {2}\right)^{k-1}
zX^{k}(z) \\
 + 
\sum_{s=0}^{k+1}
(-1)^{s}\frac {(k+s-1)} 
{2}
k^{k-s}
\left(\frac {k-1} {2}\right)^{s-1}
\binom {k+1}s
zX^{s}(z) 
\\
-(-1)^k\left(\frac {k-1} {2}\right)^{k-1}
X(z)\left(k-\frac {k-1} {2}X(z)\right) 
+(-1)^k\frac {k+1} {2}\left(\frac {k-1} {2}\right)^{k-1}.
\end{multline*}
In the last expression, the sums can in fact be extended to run
over {\it all\/} integers $j$ respectively $s$. A comparison of
powers of $X(z)$ then makes it obvious that the sum over $j$ cancels
with the sum over $s$. To the remaining terms $X^{k+1}(z)$ and
$X^k(z)$ we apply again the relation \eqref{eq:Tk}. This turns the
above expression into
\begin{multline*}
(-1)^{k+1}\left(\frac {k-1} {2}\right)^{k}
(X(z)-1)X(z) 
+(-1)^k
\frac {k+1} {2}
\left(\frac {k-1} {2}\right)^{k-1}
(X(z)-1) \\
-(-1)^k\left(\frac {k-1} {2}\right)^{k-1}
X(z)\left(k-\frac {k-1} {2}X(z)\right) 
+(-1)^k\frac {k+1} {2}\left(\frac {k-1} {2}\right)^{k-1}=0,
\end{multline*}
which finishes the proof of \eqref{eq:Sdiff}.
\end{proof}

\end{document}